\newcommand{\subjclass}[2][AMS 2000]{%
  \let\@oldtitle\@title%
  \gdef\@title{\@oldtitle\footnotetext{#1 \emph{subject classification.} #2}}%
}
\newcommand{\keywords}[1]{%
  \let\@@oldtitle\@title%
  \gdef\@title{\@@oldtitle\footnotetext{\emph{Key words and phrases.} #1.}}%
}
\theoremstyle{plain}
\newtheorem{thm}{Theorem}[section]
\newtheorem{cor}[thm]{Corollary}
\newtheorem{lem}[thm]{Lemma}
\newtheorem{assu}[thm]{Assumption}
\newtheorem{defn}[thm]{Definition}
\theoremstyle{remark}
\newtheorem{rem}[thm]{Remark}
\numberwithin{equation}{section}
\date{}
\begin{document}
\title{Singular vector distribution of sample covariance matrices}


\author{Xiucai Ding }
\affil{Department of Statistical Sciences, University of Toronto}

\subjclass{15B52, 15A18}
\keywords{Sample covariance matrices, Singular vector distribution, Universality, Deformed Marcenko-Pastur law}

\maketitle

\begin{abstract} 
We consider a class of sample covariance matrices of the form $Q=TXX^{*}T^*,$ where $X=(x_{ij})$ is an $M \times N$ rectangular matrix consisting of i.i.d entries and $T$ is a deterministic matrix satisfying $T^*T$ is diagonal.  Assuming $M$ is comparable to $N$, we prove that the distribution of the components of the singular vectors  close to the edge singular values agrees with that of Gaussian ensembles provided the first two moments of $x_{ij}$ coincide with the Gaussian random variables. For the singular vectors associated with the bulk singular values, the same conclusion holds if the first four moments of $x_{ij}$ match with those of Gaussian random variables. Similar results have been proved for Wigner matrices by Knowles and Yin in \cite{KY}. 
\end{abstract}

\section{Introduction}
In the analysis of multivariate data, a large collection of statistical methods including principal component analysis, regression analysis and clustering analysis require the knowledge of covariance matrices \cite{CRZ}. The advance of data acquisition and storage has led to datasets for which the sample size $N$ and the number of variables $M$ are both large. This high dimensionality cannot be handled using
the classical statistical theory.

For applications involving large dimensional covariance matrices, it is important to understand the local behavior of the 
the singular values and  vectors. Assuming that $M$ is comparable to $N,$  the spectral analysis of the singular values has attracted considerable interests since the seminal work of Marcenko and Pastur \cite{MP}. Since then, numerous researchers have contributed to weakening the conditions on matrix entries  as well as extending the class of matrices for which the empirical spectral distributions (ESD) have nonrandom limits. For a detailed review, we refer to the monograph \cite{BS}. Besides the ESD of the singular values, the limiting distributions of the extreme singular values were analysed in a collection of celebrated papers. The results were first proved for Wishart matrix  (i.e sample covariance matrices obtained from a data matrix consisting of i.i.d centered real or complex Gaussian entries) in \cite{IJ2,TW}; later on they were proved for matrices with entries satisfying arbitrary sub-exponential distribution \cite{BPZ1, PY, PY1}.  And most recently, the weakest moment condition  was given in \cite{DY}. 

However, less is known for the singular vectors. Therefore, recent research on the limiting behaviour
of the singular vectors has attracted considerable interests among mathematicians and statisticians. Silverstein firstly derived the limit theorems of the eigenvectors of covariance matrices \cite{JS2}; later  on the results were proved for a general class of covariance matrices \cite{BMP}. The delocalization property for the eigenvectors were shown in \cite{BKYY,PY1}. And the universal properties of the eigenvectors of covariance matrices were analysed in \cite{BEKYY,BKYY,LP, TV}. For a recent survey of the results, we refer to \cite{ORVW}. In this paper, we prove the universality for the distribution of the singular vectors for a general class of covariance matrices of the form $Q=T XX^{*} T^*$, where $T$ is deterministic matrix satisfying $T^*T$ is diagonal.

The covariance matrix $Q$ contains a general class of covariance structures and random matrix models \cite[Section 1.2]{BKYY}. The  singular values analysis of $Q$
has attracted considerable attention, see among others, the limiting spectral distribution and Stieltjes transform were derived in \cite{JS1}, the Tracy-Widom asymptotics of the extreme eigenvalues were proved in \cite{BPZ1, KN, KY1, LS4} and the anisotropic local law was proposed in \cite{KY1}. It is notable that in general, $Q$ contains the spiked covariance matrices  \cite{BBP, BGGM, BGN, BKYY, IJ2}. In such models, the ESD of $Q$ still satisfies the Marcenko-Pastur (MP) law and some of the eigenvalues of $Q$ will detach from the bulk and become outliers. However, in this paper, we adapt the regularity Assumption \ref{defnregularity} to rule out the outliers for the purpose of universality discussion.  Actually,  it is shown in \cite{CDF,KY2} that, the distributions of the outliers are not universal.  


In this paper, we study the singular vector distribution of  $Q$.  We prove the universality for the components of the edge singular vectors by assuming the matching of the first two moments of the matrix entries. We also prove similar results in the bulk, under stronger assumption that the first four moments of the two ensembles match. Similar results have been proved for Wigner matrices in \cite{KY}. 
\paragraph{ 1.1. Sample covariance matrices with a general class of populations.} We first introduce some notations. Throughout the paper, we will use
\begin{equation}\label{def_d_n}
r=\lim_{N \rightarrow \infty}r_N=\lim_{N \rightarrow  \infty } \frac{N}{M}.
\end{equation}
Let $X=(x_{ij})$ be an $M\times N$ data matrix with  centered entries $x_{ij}= N^{-1/2}q_{ij}$, $1 \leq i \leq M$ and $1 \leq j \leq N ,$ where $q_{ij}$ are i.i.d random variables with unit variance and for all  $p \in \mathbb{N}$, there exists a constant $C_p$, such that $q_{11}$ satisfies the following condition
\begin{equation}
\mathbb{E}\vert q_{11} \vert^p \leq C_p . \label{INTRODUCTIONEQ}
\end{equation}

We consider the sample covariance matrix $Q=T XX^* T^*,$ where $T$ is a deterministic matrix satisfying $T^*T$ is a positive diagonal matrix. Using the QR factorization \cite[Theorem 5.2.1]{GL}, we find that $T=U\Sigma^{1/2},$ where $U$ is an orthogonal matrix  and $\Sigma$ is a positive diagonal matrix.  Denote $Y=\Sigma^{1/2} X,$ and the singular value decomposition of  $Y$ as $Y = \sum\limits_{k = 1}^{N\wedge M} {\sqrt {\lambda_k } \xi_k } \zeta _{k}^* $,  where $\lambda_k, k=1,2,\cdots, N \wedge M$ are the nontrivial eigenvalues of $Q$ and $\{\xi_{k}\}_{k=1}^{M}$ and $\{\zeta_{k}\}_{k=1}^{N}$ are orthonormal bases of $\mathbb R^{M}$ and $\mathbb R^{N}$ respectively. First of all, we observe that 
\begin{equation*}
X^*T^*TX=Y^*Y=\mathbf{Z} \Lambda_N \mathbf{Z}^*,
\end{equation*} 
where the columns of $\mathbf{Z}$  are $\zeta_1, \cdots, \zeta_N$ and $\Lambda_N$ is a diagonal matrix with entries $\lambda_1, \cdots, \lambda_N.$ As a consequence, $U$ will not influence the right singular vectors of $Y$.  Next, we have
\begin{equation*}
TXX^*T^*=UYY^* U^*=U\bm{\Xi}\Lambda_M \bm{\Xi}^* U^*,
\end{equation*}
where  the columns of $\bm{\Xi}$ are $\xi_k, k=1,2,\cdots, M$ and $\Lambda_M$ is a diagonal matrix containing $\lambda_1, \cdots, \lambda_M.$  Use the fact that the product of orthogonal matrices is again orthogonal, we conclude that the left singular vectors  of $TX$ are  $\hat{\xi}_k: =U\xi_k.$ Hence, the components of $\hat{\xi}_k$ is a linear combination of $\xi_k$.  For instance, we have
\begin{equation*}
\hat{\xi}_k(i) \hat{\xi}_k(j)=\sum_{p_1=1}^M \sum_{p_2=1}^M U_{i p_1} U_{j p_2} \xi_{k}(p_1) \xi_{k}(p_2).
\end{equation*}
By the delocalization result (see Lemma \ref{de_local}) and dominated convergence theorem, we only need to consider the universality of the entries of $\xi_k.$ The above discussion shows that, we can make the following assumptions on $T:$
\begin{equation} \label{defn_sigma_values}
T \equiv \Sigma^{1/2}=\operatorname{diag}\{\sigma_1^{1/2}, \cdots, \sigma_M^{1/2}\}, \ \text{with} \  \sigma_1 \geq \sigma_2 \geq \cdots \geq \sigma_M  >0.
\end{equation} 
We denote the empirical spectral distribution  of $\Sigma$ by
\begin{equation} \label{def_pi}
\pi:=\frac{1}{M} \sum_{i=1}^M  \delta_{\sigma_i}.
\end{equation}
Suppose that there exists some small positive constant $\tau$ such that,  
\begin{equation} \label{assm_11}
\tau< \sigma_M \leq \sigma_1 \leq \tau^{-1}, \ \tau \leq r \leq \tau^{-1}, \ \pi([0,\tau]) \leq 1-\tau. 
\end{equation}
 For definiteness, in this paper we focus on the real case, i.e. all the entries $x_{ij}$ are real. However, it is clear that our results and proofs can be applied to the complex case after minor modifications if we assume in addition that $\operatorname{Re}\, x_{ij}$ and $\operatorname{Im}\, x_{ij}$ are independent centered random variables with the same variance. 
To avoid repetition, we summarize the basic assumptions for future reference. 
\begin{assu}\label{assu_main} We assume $X$ is an $M \times N $ matrix with centered i.i.d entries satisfying (\ref{def_d_n}) and (\ref{INTRODUCTIONEQ}). We also assume that $T$ is a deterministic $M \times M$ matrix satisfying (\ref{defn_sigma_values}) and (\ref{assm_11}).
\end{assu} 

From now on, we will always use $Y=\Sigma^{1/2}X$ and its singular value decomposition $Y = \sum\limits_{k = 1}^{N\wedge M} {\sqrt {\lambda_k } \xi_k } \zeta _{k}^*$, where $\lambda_1 \geq \lambda_2 \geq \cdots \geq \lambda_{M \wedge N}.$
 
\paragraph{1.2. Deformed Marcenko-Pastur law.} We use this subsection to  discuss  the empirical spectral distribution of  $X^*T^*TX,$ where we basically follow the discussion of \cite[Section 2.2]{KY1}. It is well-known that if $\pi$ is a compactly supported probability measure on $\mathbb{R},$ and let $r_N>0,$ then for any $z \in \mathbb{C}_{+},$ there is a unique $m \equiv m_N(z) \in \mathbb{C}_{+}$ satisfying 
\begin{equation} \label{selfconsistenteuqation1}
\frac{1}{m}=-z+\frac{1}{r_N} \int \frac{x}{1+mx}\pi (dx).
\end{equation} 
\noindent We refer the reader to \cite[Lemma 2.2]{KY1} and \cite[Section 5]{SC} for more detail. In this paper, we define the deterministic function $m \equiv m(z)$ as the unique solution of (\ref{selfconsistenteuqation1}) with $\pi$ defined in (\ref{def_pi}). We define by $\rho$ the probability measure associated with $m$ (i.e. $m$ is the Stieltjes  transform of $\rho$) and call it the asymptotic density of $X^*T^*TX$.  Our assumption (\ref{assm_11}) implies that the spectrum of $\Sigma$ cannot be concentrated at zero, thus it ensures $\pi$ is a compactly supported probability measure. Therefore, $m$ and $\rho$ are well-defined.

Let $z \in \mathbb{C}_{+},$ then $m \equiv m(z)$ can be characterized as the unique solution of the equation
\begin{equation}  \label{defnf}
z=f(m), \ \operatorname{Im} m \geq 0, \ \ \text{where}  \ f(x):=-\frac{1}{x}+\frac{1}{r_N} \sum_{i=1}^M \frac{\pi(\{\sigma_i\})}{x+\sigma_i^{-1}}.
\end{equation}
The behaviour of $\rho$ can be entirely understood by the analysis of $f$. We summarize the elementary properties of $\rho$ as the following lemma. It can be found in \cite[Lemma 2.4, 2.5 and 2.6]{KY1}. 
\begin{lem} \label{propertiesofrho} Denote $\overline{\mathbb{R}}=\mathbb{R} \cup \{\infty \}$, then f defined in (\ref{defnf}) is smooth on the $M+1$ open intervals of  $ \overline{\mathbb{R}}$ defined through
\begin{equation*}
I_1:=(-\sigma_1^{-1},0), \ I_i:=(-\sigma_i^{-1}, -\sigma_{i-1}^{-1}), \ i=2,\cdots, M, \ I_0:= \overline{R}/ \cup_{i=1}^M \bar{I}_i.
\end{equation*}
We also introduce a multiset $\mathcal{C} \subset \overline{\mathbb{R}}$ containing the critical points of $f$, using the conventions that a nondegenerate critical point is counted once and a degenerate critical point will be counted twice. In the case $r_N=1,$  $\infty$ is a nondegenerate critical point. With the above notations, we have
\begin{itemize}
\item({\bf Critical Points})  $: \vert \mathcal{C} \cap I_0 \vert=\vert \mathcal{C} \cap I_1 \vert=1$ and $\vert \mathcal{C} \cap I_i \vert \in \{0,2\}$ for $i=2, \cdots, M.$ Therefore, $\vert \mathcal{C} \vert=2p$, where for convenience, we denote by $x_1 \geq x_2 \geq \cdots \geq x_{2p-1}$ be the $2p-1$ critical points in $I_1 \cup \cdots \cup I_M$ and $x_{2p}$ be the unique critical point in $I_0$.
\item ({\bf Ordering}) : Denote $a_k:=f(x_k) $, we have $a_1 \geq \cdots \geq a_{2p}.$ Moreover, we have $x_k=m(a_k)$ by assuming $m(0):= \infty$ for $r_N=1$. Furthermore, for $k=1,\cdots,2p,$ there exists a constant $C$ such that $0\leq a_{k} \leq C$. 
\item ({\bf Structure of $\rho$}): $\textsl{supp} \ \rho \cap (0, \infty)=(\cup_{k=1}^p[a_{2k}, a_{2k-1}])\cap (0, \infty)$. 
\end{itemize}
\end{lem}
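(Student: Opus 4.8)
The plan is to reduce the whole statement to a study of the real rational function $f$ of \eqref{defnf}, combined with the classical description of the support of a deformed Marcenko--Pastur law. By \eqref{assm_11} the measure $\pi$ is compactly supported and bounded away from $0$, so \eqref{selfconsistenteuqation1} has a unique Stieltjes-transform solution $m$ and hence defines $\rho$; by the standard theory (see \cite{SC} and \cite{BS}) $m$ extends continuously to $\overline{\mathbb R}$, the identity $z=f(m(z))$ holds on $\mathbb C_+$ and persists on the real axis off $\mathrm{supp}\,\rho$, and a point $\alpha\in(0,\infty)$ lies outside $\mathrm{supp}\,\rho$ if and only if $m$ is real-analytic near $\alpha$ with $x:=m(\alpha)$ not a pole of $f$, $f(x)=\alpha$ and $f'(x)>0$. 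So everything is controlled by the poles of $f$ (which are $0$ and the $-\sigma_i^{-1}$), by the critical points of $f$ on the $M+1$ intervals $I_0,\dots,I_M$, and by the sign of $f'$ between them.

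The computational heart is the critical-point count. We have $f'(x)=\frac{1}{x^2}-\frac{1}{r_N}\sum_{i=1}^M\frac{\pi(\{\sigma_i\})}{(x+\sigma_i^{-1})^2}$, and the substitution $t=1/x$ gives, for $x\notin\{0,\infty\}$, the equivalence $f'(x)=0\iff g(t)=1$ with $g(t):=\frac{1}{r_N}\sum_{i=1}^M\frac{\pi(\{\sigma_i\})\sigma_i^2}{(\sigma_i+t)^2}$. The point is that $g$ is a sum of strictly convex, strictly positive functions, each with a single pole at $t=-\sigma_i$, so $g$ is strictly convex on every interval of $\overline{\mathbb R}$ between two consecutive members of $\{-\sigma_i\}$. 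Under $t=1/x$ each $I_i$ with $2\le i\le M$ corresponds to such an interval whose two endpoints are poles of $g$; there $g$ runs from $+\infty$ to $+\infty$ and is convex, so $g=1$ has $0$ or $2$ solutions counted with multiplicity, i.e. $|\mathcal C\cap I_i|\in\{0,2\}$. The interval $I_1$ corresponds to $(-\infty,-\sigma_1)$ and $I_0$ to $(-\sigma_M,\infty)$; on each of these $g$ is convex, strictly positive, tends to $0$ at the infinite end and to $+\infty$ at the pole, and convexity plus positivity forces $g$ to be strictly monotone, so $g=1$ has exactly one (nondegenerate) solution, giving $|\mathcal C\cap I_1|=|\mathcal C\cap I_0|=1$. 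Moreover $x=\infty$ (i.e. $t=0$) solves $g(t)=1$ exactly when $g(0)=r_N^{-1}=1$, which explains the role of $r_N=1$ and the statement that then $\infty$ is a nondegenerate critical point. Hence $|\mathcal C|$ is even, $=2p$, with $x_1\ge\cdots\ge x_{2p-1}$ the critical points in $I_1\cup\cdots\cup I_M$ and $x_{2p}$ the one in $I_0$.

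Finally one translates this into the ordering and support statements. Between consecutive poles of $f$ and consecutive critical points, $f$ is strictly monotone; reading off the sign of $f'$ on each $I_i$ (using the blow-up directions of $f$ at the poles $0,-\sigma_i^{-1}$ and the value $0$ at $\infty$) identifies the maximal subintervals on which $f'>0$, and by the Silverstein--Choi characterization their images under $f$ are precisely the connected components of $(0,\infty)\setminus\mathrm{supp}\,\rho$. This forces these components to be $(a_1,\infty)$ and the gaps $(a_{2k+1},a_{2k})$, hence $a_1\ge a_2\ge\cdots\ge a_{2p}$, $\mathrm{supp}\,\rho\cap(0,\infty)=(\cup_{k=1}^p[a_{2k},a_{2k-1}])\cap(0,\infty)$, and $x_k=m(a_k)$ with the convention $m(0):=\infty$ when $r_N=1$ (this last case is exactly when the leftmost edge $a_{2p}=0$ and the $I_0$-critical point sits at $\infty$). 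The bounds $0\le a_k\le C$ are immediate: $a_k\ge 0$ because $\rho$ is supported in $[0,\infty)$, and $a_k\le C$ because \eqref{assm_11} confines every critical point and hence the values $f(x_k)$ to a compact set. I expect the main obstacle to be this last step: the careful bookkeeping that pairs each monotonicity branch of $f$ with the correct band or gap of $\rho$ and rules out spurious bands --- essentially the content of \cite[Lemma 2.6]{KY1} --- which requires tracking the sign of $f'$ across those $I_i$ with $|\mathcal C\cap I_i|=0$ versus $2$ and the endpoint behaviour at $0$ and $\infty$.
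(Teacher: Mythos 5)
The paper offers no proof of this lemma at all: it is quoted from \cite[Lemmas 2.4, 2.5 and 2.6]{KY1}, and your argument is essentially the standard proof behind that citation — the substitution $t=1/x$ turning $f'(x)=0$ into $g(t)=1$ with $g$ strictly convex between its poles gives the critical-point count on each $I_i$ (including the fact that $t=0$, i.e.\ $x=\infty$, is a nondegenerate solution precisely when $r_N=1$), and the Silverstein--Choi characterization of $(0,\infty)\setminus\operatorname{supp}\rho$ as the image of the increasing branches of $f$ gives the ordering, the identity $x_k=m(a_k)$, and the band structure. Your write-up is correct; the only portion left schematic is the final branch-by-branch bookkeeping matching monotonicity intervals of $f$ to bands and gaps of $\rho$ (and the resulting $0\le a_k\le C$), which you explicitly defer to the same cited lemma — a reasonable stopping point given that the paper itself merely cites rather than proves the statement.
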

With the above definitions and properties, we now introduce the key regularity assumption on $\Sigma$.
\begin{assu} \label{defnregularity} Fix  $\tau>0$,  we say that \\
(i) The edges $a_k,\ k=1,\cdots, 2p$ are regular if 
\begin{equation} \label{defnregularityequation1}
a_k \geq \tau, \ \min_{l \neq k} \vert a_k-a_l \vert \geq \tau, \ \min_{i} \vert x_k+\sigma_{i}^{-1} \vert \geq \tau.
\end{equation}
(ii) The bulk components $k=1,\cdots,p$ are regular if for any fixed $\tau^{\prime}>0$ there exists a constant $c\equiv c_{\tau,\tau^{\prime}}$ such that the density of $\rho$ in $[a_{2k}+\tau^{\prime}, a_{2k-1}-\tau^{\prime}]$ is bounded from below by $c$.
\end{assu}
\begin{rem} The second condition in (\ref{defnregularityequation1}) states that the gap in the spectrum of $\rho$ adjacent to $a_k$ can be well separated when $N$ is sufficiently large. And the third condition ensures a square root behaviour of $\rho$ in a small neighbourhood of $a_k$. To be specific,  consider the right edge of the $k$-th bulk component, by (A.12) of \cite{KY1},  there exists some small constant $c>0$, such that $\rho$ has the following square root behavior
\begin{equation} \label{squarerootequation}
\rho(x) \sim \sqrt{a_{2k-1}-x}, \ x \in[a_{2k-1}-c, a_{2k-1}].  
\end{equation} 
\noindent As a consequence, it will rule out the outliers. The bulk regularity imposes a lower bound on the density of eigenvalues away from the edges. For examples of matrices $\Sigma$ verifying the regularity conditions, we refer to \cite[Example 2.8 and 2.9]{KY1}.    
\end{rem}

\paragraph{1.3. Main results.} This subsection is devoted to providing the main results of this paper. We first introduce some notations. Recall that the nontrivial classical eigenvalue locations $\gamma_1 \geq \gamma_2 \geq \cdots \geq \gamma_{M \wedge N}$ of $Q$ are defined as $\int_{\gamma_i}^{\infty} d \rho =\frac{i-\frac{1}{2}}{N}. $  By Lemma \ref{propertiesofrho}, there are $p$ bulk components in the spectrum of $\rho$. For $k=1, \cdots, p$, we define the classical number of eigenvalues of the $k$-th bulk component through $N_k:= N \int_{a_{2k}}^{a_{2k-1}} d \rho.$ When $p \geq 1$, we relabel $\lambda_i$ and $\gamma_i$ separately for each bulk component $k=1,\cdots,p$ by introducing
\begin{equation} \label{relabel}
\lambda_{k,i}:=\lambda_{i+\sum_{l <k} N_l}, \ \gamma_{k,i}:=\gamma_{i+\sum_{l < k} N_l} \in (a_{2k}, a_{2k-1}).
\end{equation}
Equivalently, we can characterize $\gamma_{k,i}$ through 
\begin{equation} \label{defn_rigidity}
\int_{\gamma_{k,i}}^{a_{2k-1}} d \rho=\frac{i-\frac{1}{2}}{N}. 
\end{equation}
In the present paper, we will use the following assumption for the technical purpose of the application of the anisotropic local law. 
\begin{assu}\label{assum_nozero}
For $k=1,2,\cdots, p, \ i=1,2,\cdots,N_k,$ $\gamma_{k,i} \geq \tau,$ for some constant $\tau>0.$
\end{assu}
We define the index sets $\mathcal I_1:=\{1,...,M\}, \ \ \mathcal I_2:=\{M+1,...,M+N\}, \ \ \mathcal I:=\mathcal I_1\cup\mathcal I_2.$
We will consistently use the latin letters $i,j\in\mathcal I_1$, greek letters $\mu,\nu\in\mathcal I_2$, and $s,t\in\mathcal I$. Then we label the indices of the matrix according to $X=(X_{i \mu}: i \in \mathcal{I}_1, \mu \in \mathcal{I}_2).$ Similarly, we can label the entries of $\xi_k \in  \mathbb{R}^{\mathcal{I}_1}, \zeta_k \in \mathbb{R}^{\mathcal{I}_2}.$ In the $k$-th bulk component, $k=1,2,\cdots, p,$  we rewrite the index of $\lambda_{\alpha^{\prime}}$ as
\begin{equation}\label{defnalphaprime1}
\alpha^{\prime}:=l+\sum_{t < k}N_t, \ \text{when} \ \alpha^{\prime}-\sum_{t<k}N_t < \sum_{t \leq k} N_t-\alpha^{\prime},
\end{equation}
\begin{equation} \label{defnalphaprime2}
\alpha^{\prime}:=-l+1+\sum_{t \leq k}N_t, \ \text{when} \ \alpha^{\prime}-\sum_{t<k}N_t > \sum_{t \leq k} N_t-\alpha^{\prime}.
\end{equation}
\noindent In this paper, we will always say that \emph{$l$ is associated with $\alpha^{\prime}.$} Note that $\alpha^{\prime}$ is the index of $\lambda_{k,l}$ before the relabeling of (\ref{relabel}) and the two cases correspond to the right and left edges respectively.  Our main result on the distribution of the components of the singular vectors near the edge is the following theorem.  For any positive integers $m,k,$  some function $\theta: \mathbb{R}^m \rightarrow \mathbb{R}$ and $x=(x_1, \cdots, x_m) \in \mathbb{R}^m, $ we denote 
\begin{equation} \label{defn_thetadifferential}
\partial^{(k)} \theta(x)=\frac{\partial^k \theta(x)}{\partial x_1^{k_1} \partial x_2^{k_2}\cdots \partial x_{m}^{k_{m}}}, \ \sum_{i=1}^{m}k_i=k, \ k_1, k_2, \cdots, k_{m} \geq 0,
\end{equation}
and $||x||_2$ to be its $l_2$ norm. Denote $Q_G:= \Sigma^{1/2}X_G X_G^* \Sigma^{1/2},$ where $X_G$ is GOE and $\Sigma$ satisfies (\ref{defn_sigma_values}) and (\ref{assm_11}).
\begin{thm}[Edge universality in a single bulk component] \label{thm_edge} For $Q_V=\Sigma^{1/2}X_VX_V^* \Sigma^{1/2}$ satisfying Assumption \ref{assu_main}, let  $\mathbb{E}^G, \mathbb{E}^V$ denote the expectations with respect to $X_G, X_V.$ Consider the $k$-th bulk component,  $k=1,2,\cdots,p,$ and $l$ defined in (\ref{defnalphaprime1}) or (\ref{defnalphaprime2}), under Assumption \ref{defnregularity} and \ref{assum_nozero},  for any choices of indices $i, j \in \mathcal{I}_1, \ \mu, \nu \in \mathcal{I}_2$, there exists a $\delta \in (0,1),$ when $l \leq N_k^{\delta},$ we have
\begin{equation*}
\lim_{N \rightarrow \infty} [\mathbb{E}^V-\mathbb{E}^G] \theta( N \xi_{\alpha^{\prime}}(i) \xi_{\alpha^{\prime}}(j),  N \zeta_{\alpha^{\prime}}(\mu) \zeta_{\alpha^{\prime}}(\nu))=0,
\end{equation*}
where $ \theta$ is a smooth function in $\mathbb{R}^2$ that satisfies
\begin{equation} \label{defn_theta}
\vert \partial^{(k)} \theta(x) \vert \leq C (1+ \vert \vert x \vert \vert_2)^C, \ k=1,2,3, \ \text{with some constant} \ C>0 . 
\end{equation}
\end{thm}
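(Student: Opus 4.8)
\emph{Step 1: linearization and a resolvent identity.} Following the strategy of \cite{KY}, the first step is to express the singular vector components through a resolvent to which the anisotropic local law of \cite{KY1} applies. Introduce the $(M+N)\times(M+N)$ Hermitian matrix
\[
H:=\begin{pmatrix}0 & Y\\ Y^{*} & 0\end{pmatrix},\qquad Y=\Sigma^{1/2}X ,
\]
whose nonzero eigenvalues are $\pm\sqrt{\lambda_k}$ with eigenvectors $\tfrac1{\sqrt2}(\xi_k,\pm\zeta_k)$. By the Schur complement formula, the resolvent $\mathcal G(z)=(H-z)^{-1}$ has $(\mathcal I_1,\mathcal I_1)$ block $z(YY^{*}-z^{2})^{-1}$ and $(\mathcal I_2,\mathcal I_2)$ block $z(Y^{*}Y-z^{2})^{-1}$, so \cite{KY1} controls $\mathcal G_{st}(z)$, $s,t\in\mathcal I$, for $\operatorname{Im}z\ge N^{-1+\epsilon}$, with a deterministic approximation depending only on $\Sigma$ — hence the same for $Q_V$ and $Q_G$. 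Expanding in the spectral decomposition of $H$ gives, for $i,j\in\mathcal I_1$,
\[
\eta\operatorname{Im}\mathcal G_{ij}(E+i\eta)=\tfrac12\sum_{k}\big(\kappa_\eta(\sqrt{\lambda_k}-E)+\kappa_\eta(\sqrt{\lambda_k}+E)\big)\xi_k(i)\xi_k(j)+O(\eta),\qquad \kappa_\eta(x):=\frac{\eta^{2}}{x^{2}+\eta^{2}},
\]
and the analogous identity with $\mathcal G_{\mu\nu}$ and $\zeta_k(\mu)\zeta_k(\nu)$ for $\mu,\nu\in\mathcal I_2$; using both blocks simultaneously produces the joint moments of the left and right singular vectors.

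\emph{Step 2: reduction to Green's function functionals.} By the delocalization bound (Lemma \ref{de_local}), $N\xi_{\alpha^{\prime}}(i)\xi_{\alpha^{\prime}}(j)$ and $N\zeta_{\alpha^{\prime}}(\mu)\zeta_{\alpha^{\prime}}(\nu)$ are at most $N^{\epsilon}$ with overwhelming probability; together with the polynomial growth (\ref{defn_theta}) and a standard truncation argument (as in the discussion preceding Assumption \ref{assu_main}), it suffices to prove universality of the joint moments $\mathbb E\big[(N\xi_{\alpha^{\prime}}(i)\xi_{\alpha^{\prime}}(j))^{a}(N\zeta_{\alpha^{\prime}}(\mu)\zeta_{\alpha^{\prime}}(\nu))^{b}\big]$ for every fixed $a,b$. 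Polarization (writing $\xi(i)\xi(j)$ as a linear combination of $\langle q,\xi\rangle^{2}$ with $q\in\{e_i,e_j,\tfrac1{\sqrt2}(e_i\pm e_j)\}$, and likewise for $\zeta$) reduces these to joint moments of $N\langle q,\xi_{\alpha^{\prime}}\rangle^{2}$ and $N\langle q',\zeta_{\alpha^{\prime}}\rangle^{2}$. Edge rigidity of the singular values in the $k$-th bulk component — which holds under Assumptions \ref{defnregularity} and \ref{assum_nozero}, the latter keeping the corresponding edge of $H$ away from $0$, and which locates $\sqrt{\lambda_{k,l}}$ within $N^{-2/3+\epsilon}$ of $\sqrt{\gamma_{k,l}}$ — then allows one to choose a spectral window around $\gamma_{k,l}$ and a broadening $\eta$ at the edge scale so that, via the identities of Step 1, each such moment equals a spectral average of a product of factors $\operatorname{Im}\mathcal G_{s_rt_r}(E_r+i\eta)$ up to an $o(1)$ error. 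The hypothesis $l\le N_k^{\delta}$ is precisely what keeps $\lambda_{k,l}$ in the range where this localization is quantitatively valid.

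\emph{Step 3: Green's function comparison.} It remains to show that $\mathbb E\prod_r\operatorname{Im}\mathcal G_{s_rt_r}(E_r+i\eta)$ depends on the law of the entries of $X$ only through their first two moments, up to $o(1)$. This is carried out by a Lindeberg replacement: the $MN$ entries of $X$ are swapped one at a time for Gaussian entries with the same first two moments, and the change of the functional at each step is Taylor expanded in the swapped entry. The terms of order $\le 2$ cancel by the moment matching, while the remainder is bounded using the trivial estimate $\|\mathcal G(z)\|\le(\operatorname{Im}z)^{-1}$, the a priori bounds of the anisotropic local law, and — decisively for the edge — the additional smallness coming from the square-root behaviour (\ref{squarerootequation}) of $\rho$ near $a_{2k-1}$; summing the $O(N^{2})$ contributions gives total error $o(1)$. (It is this extra edge smallness, absent in the interior, that makes two matching moments enough here, while the bulk instead requires four.) Since the deterministic profile of $\mathcal G$ is common to both ensembles, combining Steps 2 and 3 proves the theorem.

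\emph{Main obstacle.} The delicate point is Step 2. Because $\eta$ cannot be pushed below the scale at which the local law is available, one cannot fully isolate the single singular value $\lambda_{\alpha^{\prime}}$; the Green's function functionals only resolve a short window of consecutive singular values, and one must show that the contribution of the neighbouring singular vectors to the window is negligible. Carefully balancing the window length, the broadening $\eta$, the rigidity scale and the size of the fluctuations — and, in particular, verifying that $l\le N_k^{\delta}$ suffices at the edge — is the real work; a further technical burden is propagating the $\Sigma$-dependent deterministic profile of $\mathcal G$ through the linearization so that the reference value of $N\xi_{\alpha^{\prime}}(i)^{2}$ is manifestly the same for $Q_V$ and $Q_G$.
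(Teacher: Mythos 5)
Your Steps 1 and 3 are broadly aligned with the paper's strategy (linearization plus a Lindeberg/Green-function comparison in which the edge scaling supplies the extra smallness that lets two matching moments suffice), but Step 2 contains a genuine gap, and it is exactly the step you flag as the ``main obstacle'' without resolving it. Rigidity at the edge locates $\lambda_{k,l}$ only to precision $N^{-2/3+\epsilon_1}$, which is the same order as the eigenvalue spacing there; consequently no deterministic window around $\gamma_{k,l}$, however the window length and the broadening $\eta$ are balanced, can isolate the single eigenvalue $\lambda_{\alpha^{\prime}}$: a window of width $\gtrsim N^{-2/3+\epsilon}$ contains a random, unbounded-in-$N$ number of neighbouring eigenvalues whose eigenvector contributions to $\operatorname{Im}\mathcal G_{st}(E+i\eta)$ are of the \emph{same} size as that of $\lambda_{\alpha^{\prime}}$, while a narrower window misses $\lambda_{\alpha^{\prime}}$ with non-negligible probability. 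The hypothesis $l\le N_k^{\delta}$ does not cure this (it enters only through the square-root/rigidity estimates such as (\ref{Ebound})). The paper's resolution is an additional random observable: the indicator $\mathcal{X}(E)$ of (\ref{defn_x(e)}), i.e.\ the condition $\lambda_{\alpha^{\prime}+1}<E^{-}\le\lambda_{\alpha^{\prime}}$ expressed through the eigenvalue counting function $\mathcal N(E^-,E_U)$, inserted into the integral (Lemma \ref{step1}), and then replaced by the smooth cutoff $q[\operatorname{Tr}(\mathcal{X}_E*\vartheta_{\tilde\eta})(Q_1)]$ (Lemma \ref{truncation}, via Lemma \ref{lem_deltaestimatetileeta}) so that the whole functional becomes a smooth function of Green functions to which the comparison applies. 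Without some counterpart of this counting-function cutoff your reduction ``each such moment equals a spectral average of $\operatorname{Im}\mathcal G$ factors up to $o(1)$'' is simply false, and the rest of the argument has nothing to compare.

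Two further points. First, your reduction to joint moments $\mathbb E[(N\xi_{\alpha^{\prime}}(i)\xi_{\alpha^{\prime}}(j))^a(N\zeta_{\alpha^{\prime}}(\mu)\zeta_{\alpha^{\prime}}(\nu))^b]$ is not justified for the class (\ref{defn_theta}): $\theta$ has only three polynomially bounded derivatives and the variables are bounded only by $N^{\epsilon_1}$ on the high-probability event, so asymptotic moment matching does not by itself give $[\mathbb E^V-\mathbb E^G]\theta\to 0$; the paper avoids this entirely by keeping $\theta$ throughout and using the mean value theorem at each replacement. Second, in the comparison step the first- and second-order Taylor terms cancel by moment matching, but the third-order terms do \emph{not}; they are the dominant contribution and summing $O(N^2)$ of them with only the trivial bound plus the local law is not $o(1)$. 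The paper needs the additional cancellation of Lemma \ref{keykeykeykey}, in particular the estimate (\ref{11212121}) obtained by expanding $S_{\nu\mu_1}$ as in (\ref{fffff}) to extract an extra off-diagonal factor whose leading part has vanishing partial expectation. Your outline would need an analogous argument to close Step 3.
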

\begin{thm}[Edge universality for several bulk components] \label{thm_edge_sev}
 For $Q_V=\Sigma^{1/2}X_VX_V^* \Sigma^{1/2}$ satisfying Assumption \ref{assu_main}. Consider the $k_1$-th, $\cdots$, $k_n$-th bulks, $k_1, \cdots, k_n \in \{1,2,\cdots, p\},   n \leq p$,  for  $l_{k_i}$ defined in (\ref{defnalphaprime1}) or (\ref{defnalphaprime2}) associated with the $k_i$-th bulk component, $ i=1,2,\cdots, n$,  under Assumption \ref{defnregularity} and \ref{assum_nozero}, for any choices of indices $i, j \in \mathcal{I}_1, \ \mu, \nu \in \mathcal{I}_2$,   there exists a $\delta \in (0,1),$ when $ l_{k_i} \leq N_{k_i}^{\delta},$ where $l_{k_i}$ is associated with $\alpha_{k_i}^{\prime}, \ i=1,2,\cdots,n,$  we have
\begin{equation*}
\lim_{N \rightarrow \infty} [\mathbb{E}^V-\mathbb{E}^G] \theta( N \xi_{\alpha_{k_1}^{\prime}}(i) \xi_{\alpha_{k_1}^{\prime}}(j),  N \zeta_{\alpha_{k_1}^{\prime}}(\mu) \zeta_{\alpha_{k_1}^{\prime}}(\nu), \cdots, N \xi_{\alpha_{k_n}^{\prime}}(i) \xi_{\alpha_{k_n}^{\prime}}(j),  N \zeta_{\alpha_{k_n}^{\prime}}(\mu) \zeta_{\alpha_{k_n}^{\prime}}(\nu))=0,
\end{equation*}
where $ \theta$ is a smooth function in $\mathbb{R}^{2n}$ that satisfies
\begin{equation} \label{defn_theta2nd}
\vert \partial^{(k)} \theta(x) \vert \leq C (1+ \vert \vert x \vert \vert_2)^C, \ k=1,2,3,  \ \text{with some constant} \ C>0 . 
\end{equation}
\end{thm}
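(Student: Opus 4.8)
The proof of Theorem \ref{thm_edge_sev} is an extension of that of Theorem \ref{thm_edge}; the only genuinely new feature is the bookkeeping needed to treat $n$ bulk components at once, and since the edges $a_k$ are $\tau$-separated by Assumption \ref{defnregularity}, the relevant spectral regions do not interact, so the single-bulk analysis carries over with $n$ in place of $1$. The strategy is a Green function comparison. First I would pass to the linearized resolvent
\begin{equation*}
G(z)=\begin{pmatrix}(YY^*-z)^{-1} & (YY^*-z)^{-1}Y\\ Y^*(YY^*-z)^{-1} & (Y^*Y-z)^{-1}\end{pmatrix},\qquad z\in\mathbb{C}_+,
\end{equation*}
indexed by $\mathcal I=\mathcal I_1\cup\mathcal I_2$, whose spectral decomposition exposes the rank-one blocks $\xi_k\xi_k^*$ and $\zeta_k\zeta_k^*$. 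Consequently, for a positively oriented contour $\mathcal C$ enclosing only $\lambda_{\alpha'}$ among the eigenvalues of $Q$, one has $-\tfrac{1}{2\pi\mathrm i}\oint_{\mathcal C}G_{ij}(z)\,\mathrm dz=\xi_{\alpha'}(i)\xi_{\alpha'}(j)$ when $i,j\in\mathcal I_1$, and $-\tfrac{1}{2\pi\mathrm i}\oint_{\mathcal C}G_{\mu\nu}(z)\,\mathrm dz=\zeta_{\alpha'}(\mu)\zeta_{\alpha'}(\nu)$ when $\mu,\nu\in\mathcal I_2$. Thus the functional appearing in the theorem is $\theta$ evaluated at $n$ such contour integrals, one per bulk.

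Second, I would replace the random contours by deterministic ones. For each $i$, take $\mathcal C_{k_i}$ to be a small fixed loop around the classical location $\gamma_{k_i,l_{k_i}}$; by eigenvalue rigidity — a consequence of the anisotropic local law together with (\ref{defn_rigidity}) and the square-root behaviour (\ref{squarerootequation}) near $a_{k_i}$ — and because $l_{k_i}\le N_{k_i}^{\delta}$ with $\delta$ small, $\mathcal C_{k_i}$ encloses exactly $\lambda_{\alpha'_{k_i}}$ with overwhelming probability; since the $k_i$-th bulks lie in $\tau$-separated regions the loops $\mathcal C_{k_1},\dots,\mathcal C_{k_n}$ can moreover be chosen pairwise separated by a constant. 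On the complementary bad event I would invoke the delocalization bound of Lemma \ref{de_local} and the polynomial growth (\ref{defn_theta2nd}) of $\theta$, together with the super-polynomially small probability of the bad event, to show this contribution vanishes in the limit. This reduces the claim to showing that $[\mathbb E^V-\mathbb E^G]$ of $\theta$ applied to the $n$ \emph{deterministic} contour integrals of entries of $G$ tends to $0$.

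Third comes the comparison itself. I would order the $MN$ independent entries of $X$ and, one at a time, swap an entry of $X_V$ for the matching entry of $X_G$, expand $G$ along the contours by the resolvent identity, and Taylor-expand the $2n$-variable composition $\theta\circ(\text{contour integrals})$ to third order. The zeroth, first, and second order terms cancel in expectation because the first two moments of the entries match; the third-order remainder, after expanding each contour integral, is a sum of products of at most three entries of $G$ evaluated on or just outside $\mathcal C_{k_1},\dots,\mathcal C_{k_n}$, controlled by the anisotropic local law. The decisive point — and the reason matching only two moments suffices at the edge, in contrast to the four moments required in the bulk — is that near a spectral edge the local law delivers the improved bound in which $\operatorname{Im} m(z)$ and the off-diagonal entries of $G$ are of order $\sqrt{\kappa+\eta}$, where $\eta=\operatorname{Im}z$ and $\kappa$ is the distance from $z$ to the nearest edge; this extra smallness makes the per-swap change $o(N^{-2-c})$, so that summing over the $O(N^2)$ swaps and the finitely many bulks gives $o(1)$.

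The main obstacle is this third-order error estimate: one must verify that a single entry swap changes each contour integral $\oint_{\mathcal C_{k_i}}G_{st}$ by $o(N^{-2-c})$, which requires combining the square-root decay (\ref{squarerootequation}) of $\rho$ at $a_{k_i}$, the restriction $l_{k_i}\le N_{k_i}^{\delta}$ with $\delta$ chosen sufficiently small, and the optimal form of the anisotropic local law on and near the contour. The multi-bulk feature is not itself a source of difficulty: because $\mathcal C_{k_1},\dots,\mathcal C_{k_n}$ lie in $\tau$-separated regions, the resolvent entries appearing on different contours are governed by the local law at well-separated spectral parameters, so the mixed partials of $\theta$ across bulks factor into pieces each estimated exactly as in the single-bulk case, and the proof of Theorem \ref{thm_edge} goes through with only cosmetic changes.
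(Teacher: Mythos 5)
Your overall architecture (reduce the singular-vector overlaps to Green-function functionals, then run a Lindeberg swapping argument using two-moment matching and the edge improvement of the local law, treating the $n$ well-separated bulks in parallel) is in the right spirit, and your last remark — that the multi-bulk feature only requires $n$ separated spectral windows and multivariate Taylor/mean-value arguments — is exactly how the paper passes from the single-bulk case to Theorem \ref{thm_edge_sev} (it uses the $n$ intervals $I_i=[a_{2k_i-1}-N^{-2/3+\epsilon},a_{2k_i-1}+N^{-2/3+\epsilon}]$). However, your second step contains a genuine gap that breaks the argument: you claim that, by rigidity, a \emph{deterministic} contour $\mathcal C_{k_i}$ around the classical location $\gamma_{k_i,l_{k_i}}$ encloses exactly $\lambda_{\alpha'_{k_i}}$ with overwhelming probability. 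Near the edge this is false. Rigidity (Lemma \ref{lem_rigidity}) only locates $\lambda_{k,l}$ to precision $l^{-1/3}N^{-2/3+\epsilon_1}$, while the spacing between neighbouring classical locations there is of order $l^{-1/3}N^{-2/3}$; the individual edge eigenvalues fluctuate (Tracy--Widom) on the same scale as the gaps, so the event that a fixed contour captures precisely the $\alpha'$-th eigenvalue, and not its neighbour, has probability bounded away from $1$ — it is certainly not super-polynomially rare, so your treatment of the ``bad event'' via delocalization plus a tiny probability also fails. A second, related problem is that even on the good event the indicator ``$\mathcal C_{k_i}$ encloses exactly $\lambda_{\alpha'_{k_i}}$'' is not a smooth function of the matrix entries, so one cannot Taylor-expand $\theta$ of the contour integrals under a single entry swap: the enclosed eigenvalue can change during the swapping, and the functional is discontinuous there.

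This is precisely the difficulty the paper's three-step scheme is built to circumvent, and it is the missing idea in your proposal. The paper first writes $N\zeta_{\alpha'}(\mu)\zeta_{\alpha'}(\nu)$ as $\frac{N}{\pi}\int_I \tilde G_{\mu\nu}(E+i\eta)\,\mathcal X(E)\,dE$ with $\eta=N^{-2/3-\epsilon_0}$ and a \emph{random} window encoded by $\mathcal X(E)=\mathbf 1(\lambda_{\alpha'+1}<E^-\le\lambda_{\alpha'})$, i.e.\ by the eigenvalue counting function $\mathbf 1(\mathcal N(E^-,E_U)=l)$ (Lemma \ref{step1}); it then replaces this sharp indicator by a smooth cutoff $q[\operatorname{Tr}(\mathcal X_E*\vartheta_{\tilde\eta})(Q_1)]$ of a counting function smoothed on scale $\tilde\eta=N^{-2/3-9\epsilon_0}$, which is itself an integral of $\operatorname{Im}m_2$ and hence of Green functions (Lemma \ref{truncation}, using Lemma \ref{lem_deltaestimatetileeta}); only after this is the whole quantity a (high-probability) smooth function of resolvent entries, so that the swapping argument of Section \ref{greenfunction_comp_edge} — with its decomposition in the counting parameter $s$, the fourth-order resolvent expansion, and the extra cancellation needed for the third-order terms in (\ref{11212121}) — can be carried out. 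Without an analogue of steps (i)--(ii), your comparison in step three has no well-defined smooth functional to act on, so the proof as proposed does not go through; with them, the multi-bulk extension is indeed the routine bookkeeping you describe.
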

\begin{rem} The results  in Theorem  \ref{thm_edge} and \ref{thm_edge_sev} can be easily extended to a general form containing more entries of the singular vectors using a general form of Green function comparison argument. For example, to extend Theorem \ref{thm_edge}, we consider the $k$-th bulk component and choose any positive integer $\beta$, under Assumption \ref{defnregularity} and \ref{assum_nozero}, for any choices of indices $i_1, j_1, \cdots, i_{\beta}, j_{\beta} \in \mathcal{I}_1$ and $\mu_1, \nu_1, \cdots, \mu_{\beta}, \nu_{\beta} \in \mathcal{I}_2$,  for the corresponding $l_{i}$ defined in (\ref{defnalphaprime1}) or (\ref{defnalphaprime2}),  $i=1,2,\cdots, \beta$,  there exists some $0<\delta<1$ with $ 0< \max_{1\leq i \leq \beta} \{ l_{i} \} \leq N_k^{\delta}$,  we have
\begin{equation} \label{generealsigunlaredge}
\lim_{N \rightarrow \infty} [\mathbb{E}^V-\mathbb{E}^G] \theta(N  \xi_{\alpha^{\prime}_1}(i_1) \xi_{\alpha_1^{\prime}}(j_1), N  \zeta_{\alpha_1^{\prime}}(\mu_1) \zeta_{\alpha^{\prime}_1}(\nu_1) ,\cdots, N  \xi_{\alpha_{\beta}^{\prime}}(i_{\beta}) \xi_{\alpha^{\prime}_{\beta}}(j_{\beta}), N  \zeta_{\alpha_{\beta}^{\prime}}(\mu_{\beta}) \zeta_{\alpha_{\beta}^{\prime}}(\nu_{\beta}))=0,
\end{equation}
where $\theta \in \mathbb{R}^{2 \beta}$ is a smooth function function satisfying  $\vert \partial^{(k)} \theta(x) \vert \leq C (1+ \vert \vert x \vert \vert_2)^C, \ k=1,2,3,$  with some constant $C>0.$  Similarly, we can extend Theorem \ref{thm_edge_sev} to contain more entries of singular vectors.
\end{rem}

Recall (\ref{relabel}),  denote $\varpi_k:=(\vert f^{\prime \prime}(x_k) \vert/2)^{1/3}, $  $\ k=1,2,\cdots, 2p,$ for any positive integer $h$,  we define
\begin{equation*}
\mathbf{q}_{2k-1,h}:=\frac{N^{\frac{2}{3}}}{\varpi_{2k-1}}( \lambda_{k,h}-a_{2k-1}), \ \mathbf{q}_{2k,h}:=-\frac{N^{\frac{2}{3}}}{\varpi_{2k}}(\lambda_{k,N_k-h+1}-a_{2k}).
\end{equation*} 
Consider a smooth function $\theta \in \mathbb{R}$ whose third derivative $\theta^{(3)}$ satisfying $ |\theta^{(3)}(x)| \leq C(1+|x|)^C$, for some constant $C>0.$ Then by \cite[Theorem 3.18]{KY1}, we have
\begin{equation} \label{edgeeigenvalue}
\lim_{N \rightarrow \infty} [\mathbb{E}^V-\mathbb{E}^G] \theta(\mathbf{q}_{k, h})=0.
\end{equation}
Together with Theorem \ref{thm_edge}, we have the following corollary, which is an analogy of \cite[Theorem 1.6]{KY}.  Denote $t=2k-1$ if $\alpha^{\prime}$ is of (\ref{defnalphaprime1}) and $2k$ if $\alpha^{\prime}$ is of (\ref{defnalphaprime2}).
\begin{cor}[Edge joint distribution in a single bulk] \label{corollaryedge}  Under the assumptions of Theorem \ref{thm_edge}, for some positive integer $h,$ we have
\begin{equation} \label{coroedgeequ}
\lim_{N \rightarrow \infty} [\mathbb{E}^V-\mathbb{E}^G] \theta(\mathbf{q}_{t, h}, N \xi_{\alpha^{\prime}}(i) \xi_{\alpha^{\prime}}(j), N \zeta_{\alpha^{\prime}}(\mu) \zeta_{\alpha^{\prime}}(\nu))=0,
\end{equation}
where $\theta \in \mathbb{R}^3$ satisfying
\begin{equation} \label{defn_theta3d}
\vert \partial^{(k)} \theta(x) \vert \leq C (1+ \vert \vert x \vert \vert_2)^C, \ k=1,2,3,  \ \text{with some constant} \ C>0 . 
\end{equation}
\end{cor}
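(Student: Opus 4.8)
The plan is to combine the two limiting statements we already have in hand — the eigenvalue edge universality \eqref{edgeeigenvalue} and the singular-vector edge universality of Theorem \ref{thm_edge} — into a \emph{joint} statement via a single Green function comparison argument, rather than trying to multiply two separate convergences (which would not be legitimate since the rescaled eigenvalue $\mathbf{q}_{t,h}$ and the vector bilinear forms $N\xi_{\alpha'}(i)\xi_{\alpha'}(j)$, $N\zeta_{\alpha'}(\mu)\zeta_{\alpha'}(\nu)$ are correlated). Concretely, I would express all three quantities as functionals of the resolvent $G(z)=(\mathcal H - z)^{-1}$ of the linearized $(M+N)\times(M+N)$ matrix $\mathcal H = \begin{pmatrix} 0 & Y \\ Y^* & 0\end{pmatrix}$ (the same linearization used to state the anisotropic local law). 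The bilinear forms in $\xi_{\alpha'},\zeta_{\alpha'}$ are extracted, as in the proof of Theorem \ref{thm_edge}, by contour integration of appropriate matrix entries of $G$ around a small circle enclosing only the eigenvalue $\sqrt{\lambda_{\alpha'}}$; the position of $\lambda_{k,h}$ (hence $\mathbf{q}_{t,h}$) is captured, as in \cite{KY1}, by a smoothed count of eigenvalues in an interval, i.e. an integral of $\operatorname{Im}\operatorname{Tr} G$ against a mollified indicator. Thus $\theta(\mathbf{q}_{t,h}, N\xi_{\alpha'}(i)\xi_{\alpha'}(j), N\zeta_{\alpha'}(\mu)\zeta_{\alpha'}(\nu))$ becomes (up to negligible error) a smooth, polynomially bounded function of finitely many entries of $G$ at spectral parameters with imaginary part slightly above $N^{-2/3}$.

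The key steps, in order, are: (i) set up the resolvent representation above, truncating and regularizing exactly as in the proof of Theorem \ref{thm_edge} so that the contour-integral and mollified-counting functionals are smooth functions of $G$ entries with the derivative bounds required by the Lindeberg/Green function comparison scheme; (ii) verify the self-improving a priori input — the anisotropic local law and rigidity near the edge $a_t$ from \cite{KY1}, together with the delocalization Lemma \ref{de_local} — so that these functionals and their derivatives are under control with high probability along the interpolation; (iii) run the standard telescoping Lindeberg replacement, swapping the entries of $X_V$ for Gaussian entries one at a time, and use the moment matching (first two moments, as in Theorem \ref{thm_edge}, since both the eigenvalue statistic \eqref{edgeeigenvalue} and the vector statistic only require two matching moments at the edge) to show each replacement changes $\mathbb{E}\,\theta(\cdots)$ by $o(N^{-2})$ after summing; (iv) conclude $[\mathbb{E}^V-\mathbb{E}^G]\theta(\cdots)\to 0$. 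Essentially this is the observation that the Green function comparison argument producing Theorem \ref{thm_edge} and the one producing \eqref{edgeeigenvalue} are \emph{the same argument} applied to different test functionals of the same $G$, so they can be merged with no new ideas — only a bookkeeping step that the combined functional still satisfies the needed smoothness and boundedness.

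The main obstacle is step (i)–(ii): one must ensure that the test functional for $\mathbf{q}_{t,h}$ and the test functionals for the vector bilinear forms can be evaluated at spectral parameters on a \emph{common} scale (imaginary part of order $N^{-2/3+\epsilon}$) and that on this scale the product structure inside $\theta$ does not spoil the polynomial-growth derivative bounds \eqref{defn_theta3d} needed to bound the error terms in the Lindeberg swap; this requires combining the square-root edge behavior \eqref{squarerootequation}, the edge rigidity of $\lambda_{k,h}$, and the delocalization bound on $\xi_{\alpha'},\zeta_{\alpha'}$ to control all cross terms uniformly. Once that uniform control is in place, the telescoping estimate is routine and identical in structure to the proofs already carried out for Theorem \ref{thm_edge} and in \cite[Section 3]{KY1}, and the corollary follows. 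A cosmetically simpler alternative, if one is content with it, is to first prove the joint convergence of $(\mathbf{q}_{t,h}, N\xi_{\alpha'}(i)\xi_{\alpha'}(j), N\zeta_{\alpha'}(\mu)\zeta_{\alpha'}(\nu))$ in distribution to the corresponding Gaussian-ensemble limit directly from the merged comparison, and then note \eqref{coroedgeequ} is the statement that expectations of bounded-derivative test functions converge; but the Green function comparison route above is the cleanest since it reuses verbatim the machinery already developed.
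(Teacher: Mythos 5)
Your proposal takes essentially the same route as the paper: the paper also rewrites the eigenvalue statistic, via integration by parts and a smoothed eigenvalue counting function $f_{l}(\operatorname{Tr}(\mathbf{1}_{[E,E_U]}*\vartheta_{\tilde{\eta}}(Q_1)))$, as a functional of Green functions, combines it with the singular-vector functionals $\phi_{\alpha^{\prime}},\varphi_{\alpha^{\prime}}$ built exactly as in Lemma \ref{truncation}, and then applies the single Green function comparison of Section \ref{greenfunction_comp_edge} under two-moment matching. The only cosmetic difference is that the paper extracts the bilinear forms by integrating $\tilde{G}$ over a real interval at scale $\eta=N^{-2/3-\epsilon_0}$ with a counting-function cutoff rather than by a contour integral, but this is precisely the machinery of Theorem \ref{thm_edge} that you defer to.
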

Corollary \ref{corollaryedge} can be extended to a general form for several bulk components.  Denote $t_i=2k_i-1$ if $\alpha_{k_i}^{\prime}$ is of (\ref{defnalphaprime1}) and $2k_i$ if $\alpha_{k_i}^{\prime}$ is of (\ref{defnalphaprime2}).
\begin{cor} [Edge joint distribution for several bulks] \label{coroedgesev} Under the assumptions of Theorem \ref{thm_edge_sev}, for some positive integer $h,$  we have 
\begin{equation*}
\lim_{N \rightarrow \infty} [\mathbb{E}^V-\mathbb{E}^G] \theta( \mathbf{q}_{t_1, h},    N \xi_{\alpha_{k_1}^{\prime}}(i) \xi_{\alpha_{k_1}^{\prime}}(j),  N \zeta_{\alpha_{k_1}^{\prime}}(\mu) \zeta_{\alpha_{k_1}^{\prime}}(\nu), \cdots, \mathbf{q}_{t_n,h},  N \xi_{\alpha_{k_n}^{\prime}}(i) \xi_{\alpha_{k_n}^{\prime}}(j),  N \zeta_{\alpha_{k_n}^{\prime}}(\mu) \zeta_{\alpha_{k_n}^{\prime}}(\nu))=0,
\end{equation*}
where $\theta \in \mathbb{R}^{3 n}$ is a smooth function function satisfying 
\begin{equation} \label{defn_theta3nd}
\vert \partial^{(k)} \theta(x) \vert \leq C (1+ \vert \vert x \vert \vert_2)^C, \ k=1,2,3,  \ \text{with some arbitrary} \ C>0 . 
\end{equation}
\end{cor}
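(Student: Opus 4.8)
The plan is to derive this multi-bulk joint distribution result by combining three ingredients that are already available: the edge universality for the singular vectors across several bulk components (Theorem \ref{thm_edge_sev}), the edge universality of the rescaled eigenvalues (the input \eqref{edgeeigenvalue} taken from \cite[Theorem 3.18]{KY1}, applied bulk by bulk), and a joint Green function comparison argument that handles eigenvalues and singular-vector entries simultaneously. The single-bulk version, Corollary \ref{corollaryedge}, is obtained in exactly this way by merging the proof of Theorem \ref{thm_edge} with \eqref{edgeeigenvalue}, so the task here is to check that the same merger survives when we carry $n$ independent bulk indices $k_1,\dots,k_n$ and the corresponding data $(\mathbf{q}_{t_i,h}, N\xi_{\alpha'_{k_i}}(i)\xi_{\alpha'_{k_i}}(j), N\zeta_{\alpha'_{k_i}}(\mu)\zeta_{\alpha'_{k_i}}(\nu))$, $i=1,\dots,n$, inside a single smooth test function $\theta\in\mathbb{R}^{3n}$ with polynomial-growth derivatives up to order three.

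First I would recall the Green function representation used in the proof of Theorem \ref{thm_edge_sev}: each product $N\xi_{\alpha'}(i)\xi_{\alpha'}(j)$ (and the analogous $\zeta$-product) is expressed, via a contour integral of the resolvent of $Q$ around the relevant classical location $a_{t_i}$ on the appropriate $N^{-2/3}$ scale, as a smooth functional of finitely many matrix entries of the resolvent $G(z)=(Q-z)^{-1}$ at spectral parameters $z$ with $\operatorname{Im} z \sim N^{-2/3+\epsilon}$. Similarly, the rescaled eigenvalue $\mathbf{q}_{t_i,h}$ can be written — as in \cite{KY1} and \cite{KY} — as a smooth functional of the resolvent through a regularized counting function of eigenvalues in a window of size $N^{-2/3+\epsilon}$ around $a_{t_i}$. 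Since the bulk components are separated by $\tau$-gaps (Assumption \ref{defnregularity}(i)) and each $a_{t_i}\geq\tau$, the $n$ contours are mutually disjoint with $O(1)$ separation, so all $3n$ quantities are simultaneously realized as smooth functions of resolvent entries at $O(1)$-many well-separated spectral parameters. At this point $\theta$ of these $3n$ quantities is itself a smooth function of the resolvent entries, with derivative bounds inherited from \eqref{defn_theta3nd} together with the a priori polynomial bounds on resolvent entries supplied by the anisotropic local law and the rigidity estimates of \cite{KY1}.

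Next I would run the Lindeberg/Green function comparison scheme: interpolate between $X_V$ and $X_G$ by replacing matrix entries one at a time, Taylor-expand the resulting functional of $G$ to third order in the swapped entry, and use that the first two moments of $x_{ij}$ match (Assumption \ref{assu_main} together with the hypothesis that $X_G$ is GOE) so that the zeroth, first and second order terms cancel exactly, while the third and fourth order remainder terms are controlled by the stability of $G$ under rank-one perturbations and the local law bounds, each contributing $o(N^{-2})$ per entry, hence $o(1)$ after summing over the $O(N^2)$ entries. The delocalization Lemma \ref{de_local} and the rigidity of $\gamma_{k_i,\cdot}$ (Assumption \ref{assum_nozero}) guarantee the functional and its expansion stay in the polynomial-growth regime uniformly along the interpolation, so dominated convergence applies and the difference $[\mathbb{E}^V-\mathbb{E}^G]\theta(\cdots)$ vanishes as $N\to\infty$. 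Because this comparison only uses that the edges under consideration are regular and separated, nothing new is needed for $n>1$ beyond bookkeeping of the several contours.

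The main obstacle — and the only genuinely non-routine point — is establishing that the several rescaled eigenvalues $\mathbf{q}_{t_1,h},\dots,\mathbf{q}_{t_n,h}$ and the several singular-vector products admit a \emph{joint} smooth Green-function representation with quantitative error bounds that are stable along the entire interpolation path, simultaneously near all $n$ edges. Concretely, one must show that the regularized eigenvalue-counting functionals near distinct edges do not interfere (which follows from the $\tau$-separation of the $a_{t_i}$ and the square-root edge behavior \eqref{squarerootequation}), and that the a priori estimates — the anisotropic local law and eigenvalue rigidity from \cite{KY1} — hold with high probability uniformly for \emph{all} the spectral parameters entering the $n$ contours at once. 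Once that uniform control is in place, the comparison argument closes verbatim as in Corollary \ref{corollaryedge}, and combining the resulting telescoping estimate with \eqref{edgeeigenvalue} applied in each bulk $k_i$ and with Theorem \ref{thm_edge_sev} yields the claimed identity for $\theta\in\mathbb{R}^{3n}$.
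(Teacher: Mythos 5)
Your proposal is correct and follows essentially the same route as the paper: the paper proves this corollary by repeating the argument of Corollary \ref{corollaryedge} (eigenvalues and singular-vector products jointly rewritten as Green-function functionals via smoothed counting functions and integration by parts, then a two-moment Green function comparison), simply using $n$ disjoint edge intervals and a multidimensional integral, which is exactly the reduction you describe. The only cosmetic differences are your phrasing of the representation as a ``contour integral'' (the paper integrates $\tilde G$ over real intervals against a smooth cutoff of a smoothed eigenvalue-counting function) and your slightly simplified account of the third-order terms in the comparison, which in the paper require the extra conditional-expectation cancellation of Lemma \ref{keykeykeykey} rather than cancelling by moment matching alone.
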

\begin{rem} \label{remarkedge}
(i). Similar to (\ref{generealsigunlaredge}), the results in Corollary \ref{corollaryedge} and \ref{coroedgesev} can be easily extended to a general form containing more entries of the singular vectors. For example, to extend Corollary \ref{corollaryedge},  we can choose any positive integers $\beta$ and $h_1, \cdots, h_{\beta},$ under Assumption \ref{defnregularity} and \ref{assum_nozero},  for any choices of indices $i_1, j_1, \cdots, i_{\beta}, j_{\beta} \in \mathcal{I}_1$ and $\mu_1, \nu_1, \cdots, \mu_{\beta}, \nu_{\beta} \in \mathcal{I}_2$, for the corresponding $l_{i}$ defined in (\ref{defnalphaprime1}) or (\ref{defnalphaprime2}), $i=1,2,\cdots, \beta$,  there exists some $0<\delta<1$ with $ \max_{1\leq i \leq \beta} \{ l_{i} \} \leq N_k^{\delta}$,  we have
\begin{equation*}
\lim_{N \rightarrow \infty} [\mathbb{E}^V-\mathbb{E}^G] \theta(\mathbf{q}_{t_1, h_1}, N  \xi_{\alpha^{\prime}_1}(i_1) \xi_{\alpha_1^{\prime}}(j_1), \zeta_{\alpha_1^{\prime}}(\mu_1) \zeta_{\alpha^{\prime}_1}(\nu_1), \cdots, \mathbf{q}_{t_{\beta}, h_{\beta}}, N  \xi_{\alpha_{\beta}^{\prime}}(i_{\beta}) \xi_{\alpha^{\prime}_{\beta}}(j_{\beta}),  N \zeta_{\alpha_{\beta}^{\prime}}(\mu_{\beta}) \zeta_{\alpha_{\beta}^{\prime}}(\nu_{\beta}))=0.
\end{equation*}
where the smooth function $\theta \in \mathbb{R}^{3 \beta}$ satisfies $\vert \partial^{(k)} \theta(x) \vert \leq C (1+ \vert \vert x \vert \vert_2)^C,  \ k=1,2,3, $ for some constant $C.$ \\
(ii). Theorem \ref{thm_edge} and \ref{thm_edge_sev}, Corollary \ref{corollaryedge} and \ref{coroedgesev} still hold true for the complex case, where the moment matching condition is replaced by 
\begin{equation} \label{mme}
\mathbb{E}^G\bar{x}_{ij}^l x_{ij}^u=\mathbb{E}^V \bar{x}_{ij}^l x_{ij}^u,  \ 0 \leq l+u \leq 2.
\end{equation}
(iii).  All the above theorems and corollaries are stronger than their counterparts from \cite{KY} because they hold much further into the bulk components. For instance, in the counterpart of Theorem \ref{thm_edge}, which is \cite[Theorem 1.6]{KY}, the universality was established under the assumption that $l \leq (\log N)^{C\log \log N}.$
\end{rem}

In the bulks, similar results hold under the stronger assumption that the first four moments of the matrix entries match with those of Gaussian ensembles.
\begin{thm}[Bulk universality in a single bulk component] \label{thm_bulk}
 For $Q_V=\Sigma^{1/2}X_VX_V^* \Sigma^{1/2}$ satisfying Assumption \ref{assu_main}.  Assuming that the third and fourth moments of $X_V$ agree with those of $X_G$ and  considering the $k$-th bulk component, $k=1,2,\cdots,p$ and  $l$ defined in (\ref{defnalphaprime1}) or (\ref{defnalphaprime2}) , under Assumption \ref{defnregularity} and \ref{assum_nozero}, for  any choices of indices $i, j \in \mathcal{I}_1, \ \mu, \nu \in \mathcal{I}_2$, there exists a small $\delta \in (0,1),$ when $\delta N_k \leq l \leq (1-\delta)N_k,$ we have
\begin{equation*}
\lim_{N \rightarrow \infty} [\mathbb{E}^V-\mathbb{E}^G] \theta(N \xi_{\alpha^{\prime}}(i) \xi_{\alpha^{\prime}}(j),  N \zeta_{\alpha^{\prime}}(\mu) \zeta_{\alpha^{\prime}}(\nu))=0,
\end{equation*}
where $ \theta$ is a smooth function in $\mathbb{R}^2$ that satisfies
\begin{equation} \label{defn_thetabulk}
\vert \partial^{(k)} \theta(x) \vert \leq C (1+ \vert \vert x \vert \vert_2)^C, \ k=1,2,3,4,5,  \ \text{with some constant} \ C>0 . 
\end{equation}
\end{thm}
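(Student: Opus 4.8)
The plan is to follow the Green function comparison strategy already used for the edge case in Theorem~\ref{thm_edge}, and adapt it to the bulk. Three structural changes are needed: the relevant spectral window moves from the edge scale $\eta\sim N^{-2/3}$ down to a subcritical bulk scale $\eta=N^{-1-\epsilon}$; four matching moments are required instead of two; and the isolation of a single singular vector, which at the edge was supplied by the intrinsic $N^{-2/3}$ gap of the extreme eigenvalue, must now be produced from a level‑repulsion lower bound on the spacing in the regular bulk. I would work throughout with the Hermitian linearization $\mathcal H$ of $Y=\Sigma^{1/2}X$ and its resolvent $G(z)=(\mathcal H-z)^{-1}$ as in \cite{KY1}; the matrix elements of $G$ are Lorentzian‑weighted sums over the poles $\pm\sqrt{\lambda_k}$ with weights $\xi_k(i)\xi_k(j)$, $\zeta_k(\mu)\zeta_k(\nu)$ and $\xi_k(i)\zeta_k(\mu)$, so that evaluating $G$ between standard basis vectors of $\mathbb R^{\mathcal I_1}$ and of $\mathbb R^{\mathcal I_2}$ accesses the left and right singular vector components simultaneously. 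The external inputs are the anisotropic local law and eigenvalue rigidity for $Q$ from \cite{KY1} (which hold down to $\eta\gtrsim N^{-1+\epsilon}$ precisely because of Assumption~\ref{defnregularity}(ii) and Assumption~\ref{assum_nozero}), the delocalization bound of Lemma~\ref{de_local}, and an eigenvalue‑gap estimate $\min\{\lambda_{k,l-1}-\lambda_{k,l},\,\lambda_{k,l}-\lambda_{k,l+1}\}\gtrsim N^{-1-\epsilon/2}$ holding with probability $1-N^{-c\epsilon}$, which follows from the bulk local law for $Q$.

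\emph{Step 1: reduction to a Green function observable.} Here $\alpha'$ is the global index of $\lambda_{k,l}$ with $\delta N_k\le l\le(1-\delta)N_k$, so its classical location $\gamma_{k,l}$ sits strictly inside the $k$‑th bulk, where the density $\rho$ is bounded below. Writing $\operatorname{Im}G_{ij}(E+\mathrm i\eta)$ as a sum of Lorentzians and using rigidity together with the gap estimate, I would show that at the subcritical scale $\eta=N^{-1-\epsilon}$ the pole at $\sqrt{\lambda_{\alpha'}}$ dominates, so that $N\xi_{\alpha'}(i)\xi_{\alpha'}(j)=N\eta\operatorname{Im}G_{ij}(\sqrt{\lambda_{\alpha'}}+\mathrm i\eta)+O(N^{-\epsilon/2})$ with high probability, and likewise for $N\zeta_{\alpha'}(\mu)\zeta_{\alpha'}(\nu)$. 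To remove the random spectral argument $\sqrt{\lambda_{\alpha'}}$, I would discretise onto a deterministic mesh of spacing $N^{-2-3\epsilon}$ covering an $N^{-1+\epsilon}$‑neighbourhood of $\gamma_{k,l}$ (only $N^{1+4\epsilon}$ points, affordable for a union bound), and select the correct mesh point through a smoothed eigenvalue counting function $\widetilde{\mathcal N}(E)\approx\pi^{-1}\!\int^{E}\operatorname{Im}\operatorname{tr}G(x+\mathrm i\eta)\,dx$, replacing indicator selectors by a smooth partition. The outcome is that, up to an $o(1)$ error in probability, the quantity $\theta(N\xi_{\alpha'}(i)\xi_{\alpha'}(j),N\zeta_{\alpha'}(\mu)\zeta_{\alpha'}(\nu))$ equals $\Theta(\{\operatorname{Im}G_{st}(E_a+\mathrm i\eta)\}_a,\{\operatorname{Im}\operatorname{tr}G(E_a+\mathrm i\eta)\}_a)$ for a smooth $\Theta$ with polynomially bounded derivatives. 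Finally, delocalization gives $\mathbb E[(N|\xi_{\alpha'}(i)|^2)^p]\le C_p$ uniformly in $N$, so with the growth bound \eqref{defn_thetabulk} the high‑probability identity upgrades to an identity of expectations, the bad event (where the gap estimate fails) being absorbed by a Cauchy–Schwarz $L^2$ bound.

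\emph{Step 2: Green function comparison.} With both sides written as $\mathbb E^{V}\Theta(\cdots)$ and $\mathbb E^{G}\Theta(\cdots)$ at common spectral parameters, I would run the Lindeberg replacement, swapping the entries $x_{i\mu}$ of $X_V$ for those of $X_G$ one at a time, Taylor‑expanding $\Theta$ — through the resolvent expansion of $G$ in the swapped entry and the stability of $G$ under the corresponding rank‑one perturbation of $\mathcal H$ — to fourth order with a fifth‑order remainder. Matching of the first four moments of $q_{11}$ makes the zeroth‑ through fourth‑order contributions agree between the two ensembles, and using that the relevant resolvent entries are $O(N^{\epsilon})$ even at $\eta=N^{-1-\epsilon}$ (by delocalization and the lower bound on the bulk density), the fifth‑order remainder is $o(N^{-2})$ per entry; summing over the $MN$ entries yields $[\mathbb E^{V}-\mathbb E^{G}]\Theta\to0$. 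This is exactly where the hypothesis $\theta\in C^5$ with polynomially bounded derivatives is consumed (in place of the $C^3$ of Theorem~\ref{thm_edge}): the expansion must run one order deeper in order to exploit the four matched moments, and each extra order differentiates $\Theta$, hence $\theta$, once more.

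The hard part will be the power counting in Step~2 at the subcritical scale $\eta=N^{-1-\epsilon}$, which lies below the scale $N^{-1}$ where the local law is sharp. There one cannot afford the naive bound on the fourth‑order term of the Lindeberg expansion — summed over the $MN$ entries it is only $O(N^{C\epsilon})$, not $o(1)$ — which is precisely why the fourth moment (and not merely the second) must be matched; making this quantitative requires careful use of the anisotropic local law and of the fact that $\Theta$ depends on $G$ only through the combinations $\operatorname{Im}G_{st}$ and $\operatorname{Im}\operatorname{tr}G$ produced in Step~1. A secondary difficulty, both probabilistic and combinatorial, is Step~1 itself: the Lorentzian isolation of $\lambda_{k,l}$ needs the level‑repulsion estimate, valid only with probability $1-N^{-c\epsilon}$, which must be shown to survive the passage to expectations; the mesh and the smoothed counting function must be calibrated so that the discretisation error and the union bound are simultaneously controlled; and one must keep track of the relabelling conventions \eqref{defnalphaprime1}–\eqref{defnalphaprime2} together with the two‑sided ($\pm\sqrt{\lambda_k}$) pole structure of $G$.
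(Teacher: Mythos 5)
Your overall skeleton --- work at the bulk scale $\eta=N^{-1-\epsilon}$, express the vector observable through $\operatorname{Im}G$, encode the selection of the $\alpha'$-th eigenvalue by a smoothed counting function written in terms of Green functions, and then run a Lindeberg swap expanded to fifth order so that the four matched moments remove everything up to the remainder --- is the same route the paper takes (Lemmas \ref{step1bulk}, \ref{truncationbulk} and \ref{comparisonbulk}, with the $C^5$ hypothesis consumed exactly where you say). The genuine problem is the input you use to isolate the pole at $\lambda_{\alpha'}$: the level-repulsion bound $\min\{\lambda_{k,l-1}-\lambda_{k,l},\,\lambda_{k,l}-\lambda_{k,l+1}\}\gtrsim N^{-1-\epsilon/2}$ with probability $1-N^{-c\epsilon}$, which you assert ``follows from the bulk local law for $Q$''. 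It does not. The local law and rigidity (Lemmas \ref{anisotropicsmall} and \ref{ridgitybulk}) give closeness of $\lambda_{k,l}$ to $\gamma_{k,l}$ and upper bounds on the number of eigenvalues in windows of size $N^{-1+\epsilon}$, i.e.\ upper bounds on local counts; they give no lower bound on an individual gap. A Wegner/level-repulsion estimate at scale $N^{-1-\epsilon/2}$ is a strictly harder statement: under the paper's hypotheses (only the moment bounds (\ref{INTRODUCTIONEQ}), no smoothness of the entry distribution, general diagonal $\Sigma$) it is not available from anything quoted in the paper, and such gap estimates are usually themselves consequences of bulk universality or of Dyson-Brownian-motion machinery, so invoking one here is close to circular. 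Since your Step 1 evaluates $\operatorname{Im}G$ at the single random point $\lambda_{\alpha'}+\mathrm{i}\eta$ with $\eta$ below the typical spacing, the reduction collapses on the event of a near-degeneracy, and you have no tool that makes that event negligible.

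The paper's proof is structured precisely to avoid this input: instead of evaluating at $\lambda_{\alpha'}$, it writes $N\zeta_{\alpha'}(\mu)\zeta_{\alpha'}(\nu)$ as $\frac{N}{\pi}\int_I\tilde G_{\mu\nu}(E+\mathrm{i}\eta)\mathcal{X}(E)\,dE$ over the deterministic interval $I$ of (\ref{defn_i_1bulk}), with the random indicator $\mathcal{X}(E)=\mathbf{1}(\lambda_{\alpha'+1}<E^-\le\lambda_{\alpha'})$ of (\ref{defn_x(e)}); when the gap below $\lambda_{\alpha'}$ is small the integration domain shrinks accordingly, and the contribution of nearly degenerate configurations is controlled \emph{in expectation} (this is the $I_1$/$I_2$ splitting around (\ref{defn_i1})--(\ref{truncationseq3}) in the edge case, which Lemma \ref{step1bulk} transplants to the bulk), so no lower bound on gaps is ever needed. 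The sharp indicator is then replaced by $q(\operatorname{Tr}f_E(Q_1))$ with $f_E$ smoothed via Helffer--Sj\"ostrand calculus, which plays the role of your mesh-plus-smoothed-counting selection. If you reformulate your Step 1 in this integrated form (isolation in expectation rather than pathwise isolation), the rest of your argument --- including the $o(N^{-2})$ per-swap bound on the fifth-order remainder using the bounds (\ref{bulkboundsall}) --- goes through essentially as you describe; as written, however, the proposal rests on an unproved, and under these hypotheses unavailable, level-repulsion estimate, which is a genuine gap.
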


\begin{thm}[Bulk universality for several bulk components] \label{thm_bulksev}
For $Q_V=\Sigma^{1/2}X_VX_V^* \Sigma^{1/2}$ satisfying Assumption \ref{assu_main}.  Assuming that the third and fourth moments of $X_V$ agree with those of $X_G,$  consider the $k_1$-th, $\cdots$, $k_n$-th bulks, $k_1, \cdots, k_n \in \{1,2,\cdots, p\},   n \leq p$,  for  $l_{k_i}$ defined in (\ref{defnalphaprime1}) or (\ref{defnalphaprime2}) associated with the $k_i$-th bulk component, $i=1,2,\cdots, n,$ under Assumption \ref{defnregularity} and \ref{assum_nozero}, for  any choices of indices $i, j \in \mathcal{I}_1, \ \mu, \nu \in \mathcal{I}_2$, there exists a $\delta \in (0,1),$ when $\delta N_{k_i}\leq l_{k_i} \leq (1-\delta) N_{k_i}, \ i=1,2,\cdots,n,$ we have
\begin{equation*}
\lim_{N \rightarrow \infty} [\mathbb{E}^V-\mathbb{E}^G] \theta( N \xi_{\alpha_{k_1}^{\prime}}(i) \xi_{\alpha_{k_1}^{\prime}}(j),  N \zeta_{\alpha_{k_1}^{\prime}}(\mu) \zeta_{\alpha_{k_1}^{\prime}}(\nu), \cdots, N \xi_{\alpha_{k_n}^{\prime}}(i) \xi_{\alpha_{k_n}^{\prime}}(j),  N \zeta_{\alpha_{k_n}^{\prime}}(\mu) \zeta_{\alpha_{k_n}^{\prime}}(\nu))=0,
\end{equation*}
where $ \theta$ is a smooth function in $\mathbb{R}^{2n}$ that satisfies
\begin{equation} \label{defn_thetabulk2nd}
\vert \partial^{(k)} \theta(x) \vert \leq C (1+ \vert \vert x \vert \vert_2)^C, \ k=1,2,3, 4,5,  \ \text{with some constant} \ C>0 . 
\end{equation}
\end{thm}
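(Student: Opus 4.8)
Theorem~\ref{thm_bulksev} will be proved by a Green function comparison argument in the spirit of \cite{KY}, using as inputs the anisotropic local law, eigenvalue rigidity and delocalization for $Q$ established in \cite{KY1}. The several-bulk statement needs no new idea beyond the single-bulk Theorem~\ref{thm_bulk}: the only change is that the test function $\theta$ receives a fixed larger number of overlap variables, coming from disjoint spectral regions on which all these inputs hold simultaneously. I would work with the $(M+N)\times(M+N)$ linearization of $Q$ used in \cite{KY1}, whose resolvent $G(z)$ has diagonal blocks proportional to $(Q-z)^{-1}$ and $(Y^*Y-z)^{-1}$; in this picture $\xi_{\alpha}(i)$ ($i\in\mathcal I_1$) and $\zeta_{\alpha}(\mu)$ ($\mu\in\mathcal I_2$) are entries of one and the same eigenvector, so the two families of overlaps in the statement are handled on an equal footing. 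For $E$ near the classical location $\gamma_{k,l}$ and a small spectral scale $\eta$, the quantities $\tfrac{N}{\pi}\operatorname{Im}\langle\mathbf v,G(E+\mathrm i\eta)\mathbf v\rangle$ for $\mathbf v$ ranging over the relevant coordinate vectors $\mathbf e_i,\mathbf e_j,\mathbf e_i+\mathbf e_j,\mathbf e_\mu,\mathbf e_\nu,\mathbf e_\mu+\mathbf e_\nu$, together with a smoothed eigenvalue counting function built from $\operatorname{Im}\operatorname{tr}G$, encode $N\xi_{\alpha'}(i)\xi_{\alpha'}(j)$ and $N\zeta_{\alpha'}(\mu)\zeta_{\alpha'}(\nu)$ with the correct index $\alpha'$. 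By delocalization (Lemma~\ref{de_local}) these overlaps are $\prec N^{\epsilon}$, so together with \eqref{defn_thetabulk2nd} we may truncate and assume $\theta$ and its first five derivatives bounded.

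\emph{Reduction to resolvent functionals.} Using the anisotropic local law and rigidity from \cite{KY1} --- the latter being the strong bulk bound $|\lambda_{k,l}-\gamma_{k,l}|\prec N^{-1+\epsilon}$, which holds because the restriction $\delta N_{k}\le l\le(1-\delta)N_{k}$ keeps $\gamma_{k,l}$ at a macroscopic distance from the edges $a_{2k},a_{2k-1}$ where $\rho$ vanishes like a square root as in \eqref{squarerootequation} --- I would show, as in \cite{KY}, that it suffices to prove
\[
\bigl[\mathbb E^{V}-\mathbb E^{G}\bigr]\,\Phi\!\left(\mathcal X_{k_1},\dots,\mathcal X_{k_n}\right)\longrightarrow 0 ,
\]
where each $\mathcal X_{k_i}$ is a fixed finite vector of resolvent functionals of the above type attached to the $k_i$-th bulk and $\Phi$ is smooth with the same polynomial derivative bounds as $\theta$. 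Since the bulk components $k_1,\dots,k_n$ are disjoint, the high-probability events from the local law and rigidity can be intersected, so the reduction is uniform over all $n\le p$ bulks at once; this is the only place where the several-bulk structure enters.

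\emph{Four-moment Green function comparison.} This is the core step. Interpolating between $X_V$ and $X_G$ one entry at a time and Taylor expanding $\Phi$ in the swapped entry $X_{i\mu}$ to fifth order, the terms of order $0,1,2$ cancel because the first two moments match, the terms of order $3,4$ cancel by the hypothesis that the third and fourth moments of $X_V$ agree with those of $X_G$, and the fifth-order remainder is bounded by the local-law estimates on $\langle\mathbf v,G\mathbf v\rangle$ and its derivatives in $X_{i\mu}$. Summed over the $O(N^{2})$ entries the total error is $o(1)$ once $\eta$ is taken slightly below the bulk eigenvalue spacing. This explains the need for four matching moments, equivalently five bounded derivatives of $\theta$ in \eqref{defn_thetabulk2nd}: matching only two moments would leave a third-order contribution of size $\sim N^{2}\cdot N^{-3/2}=N^{1/2}$ that does not vanish on bulk scales --- unlike at the edge (Theorems~\ref{thm_edge}--\ref{thm_edge_sev}), where the coarser scale $\eta\sim N^{-2/3}$ and the square-root edge behaviour make two moments and three derivatives sufficient.

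\emph{Conclusion and the main obstacle.} Assembling the steps and recalling that $n\le p$ and $2n$ are fixed gives the theorem, with Theorem~\ref{thm_bulk} the case $n=1$. I expect the reduction step to be the main obstacle: making the passage from the genuine eigenvector overlaps to the smooth resolvent functionals $\mathcal X_{k_i}$ quantitative in this deformed, anisotropic model --- simultaneously isolating the correct eigenvalue index in each of the several bulks, controlling all the rigidity, local-law and delocalization errors so that the final bound is $o(1)$, and fixing a single admissible exponent that determines both the scale $\eta$ and the cut-off $\delta$ and is compatible with the $O(N^{2})$ swap count in the comparison step.
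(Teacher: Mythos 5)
Your proposal follows essentially the same route as the paper: express each overlap as an integral of $\tilde G_{\mu\nu}$ (resp.\ $\tilde G_{ij}$) over an interval of size $N^{-1+\epsilon}$ around the classical location, replace the sharp eigenvalue-counting indicator by a smooth function of a smoothed counting function built from $\operatorname{Im}m_2$ (the paper does this via Helffer--Sj\"ostrand calculus, since the window $[E^-,E_U]$ has order-one length in the bulk), and then run a Lindeberg-type Green function comparison at scale $\eta=N^{-1-\epsilon_0}$ with a fifth-order resolvent expansion, where four-moment matching absorbs the third- and fourth-order terms and the remainder is $o(N^{-2})$ per swap; the several-bulk case is handled exactly as you say, by using $n$ disjoint intervals and a multidimensional Taylor/mean value argument. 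This matches the paper's proof (Lemmas \ref{step1bulk}, \ref{truncationbulk}, \ref{comparisonbulk} and the stated extension), so no further comparison is needed.
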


\begin{rem} (i). Similar to Corollary \ref{corollaryedge}, \ref{coroedgesev} and (i) of  Remark \ref{remarkedge},  we can extend the results to the joint distribution containing singular values. We take the extension of Theorem \ref{thm_bulk} as an example.  By (ii) of Assumption \ref{defnregularity}, in the bulk, we have  $\int_{\lambda_{\alpha^{\prime}}}^{\gamma_{\alpha^{\prime}}} d \rho=\frac{1}{N}+o(N^{-1}). $
Using a similar Dyson Brownian motion argument as in \cite{PY1}, combining with Theorem \ref{thm_bulk}, we have
\begin{equation} \label{coroedgeequ}
\lim_{N \rightarrow \infty} [\mathbb{E}^V-\mathbb{E}^G] \theta(\mathbf{p}_{\alpha^{\prime}}, N \xi_{\alpha^{\prime}}(i) \xi_{\alpha^{\prime}}(j), N \zeta_{\alpha^{\prime}}(\mu) \zeta_{\alpha^{\prime}}(\nu))=0.
\end{equation}
where $\mathbf{p}_{\alpha^{\prime}}$ is defined as
\begin{equation*}
\mathbf{p}_{\alpha^{\prime}}:=\rho(\gamma_{\alpha^{\prime}}) N (\lambda_{\alpha^{\prime}}-\gamma_{\alpha^{\prime}}),
\end{equation*}
and  $\theta \in \mathbb{R}^3$ satisfying
\begin{equation*} 
\vert \partial^{(k)} \theta(x) \vert \leq C (1+ \vert \vert x \vert \vert_2)^C,  \ k=1,2,3,4,5,  \ \text{with some constant} \ C>0 . 
\end{equation*}
(ii). Theorem \ref{thm_bulk} and \ref{thm_bulksev} still hold true for the complex case, where the moment matching condition is replaced by 
\begin{equation} \label{mmb}
\mathbb{E}^G\bar{x}_{ij}^l x_{ij}^u=\mathbb{E}^V \bar{x}_{ij}^l x_{ij}^u,  \ 0 \leq l+u \leq 4.
\end{equation}
\end{rem}

\paragraph{1.4.  Applications to statistics.}
In this subsection, we give a few remarks on the possible applications to statistics. It is notable that, in general, the distribution of the singular vectors of sample covariance matrix $Q=TXX^*T^*$ is unknown, even for the GOE case. However, when $T$ is a scalar matrix (i.e $T =c I$, $c>0$), Bourgade and Yau \cite[Appendix C]{PBY} have shown that the entries of the singular vectors are asymptotically normally distributed.  Hence, our universality results imply that under Assumption \ref{assu_main}, \ref{defnregularity} and \ref{assum_nozero}, when $T$ is conformal  (i.e $T^*T =c I$, $c>0$), the entries of the right singular vectors are asymptotically normally distributed. Therefore, this can be used to test the null hypothesis 
\begin{equation}\label{conformal_test}
\mathbf{H}_0: T \ \text{is a conformal matrix} . 
\end{equation} 
The statistical testing problem (\ref{conformal_test}) contains a rich class of hypothesis tests. For instance, when $T=I,$ it reduces to the sphericity test and when $c=1,$ it reduces to test whether the covariance matrix of $X$ is orthogonal \cite{YZB}. 

To illustrate how our results can be used to test (\ref{conformal_test}), we take the example by assuming $c=1$ in the following discussion. Under $\mathbf{H}_0,$ denote the QR factorization of $T$ to be $T=UI,$ the right singular vector of $TX$ is the same of $X,$ $\zeta_k, k=1,2, \cdots, N.$ Using \cite[Corollary 1.3]{PBY}, we find that for $i, k=1,2,\cdots, N,$
\begin{equation}\label{right_check}
\sqrt{N} \zeta_k(i) \rightarrow \mathcal{N},
\end{equation}
where $\mathcal{N}$ is a standard Gaussian random variable. In detail, we can take the following steps to test whether $\mathbf{H}_0$ holds true: \\

1. Randomly choose two index sets $ R_1, R_2 \subset \{1,2,\cdots, N \}$ with $|R_i|=O(1), i=1,2.$

2. Use  Bootstrapping method to sample the  columns of  $Q$ and get a sequence of $M \times N$ matrices $Q_j, j=1,2,\cdots, K.$

3. Select  $\zeta^j_k(i), k \in R_1, i \in R_2$ from $Q_j, j=1,2,\cdots, K.$ Use the classic normality test, for instance the Shapiro-Wilk test  to check whether  (\ref{right_check}) hold true for all the above samples.  Record the number of samples cannot be rejected by the normality test by $A$.

4. Given some pre-chosen significant level $\alpha,$  reject $\mathbf{H}_0$ if $\frac{A}{|R_1||R_2|}< 1-\alpha.$ \\

The other important information from our result is that the singular vectors are completely delocalized. In the low rank matrix denoising problem  \cite{DXC20171},
\begin{equation*}
\hat{S}=TX+S,
\end{equation*} 
where $S$ is a deterministic low rank matrix. Consider the rank one case and assume  the left singular vector $u$ of $S$ is sparse, using the completely delocalization result, it can be shown that  the first left singular vector of $\hat{S}$ has the same sparse structure as that of $u.$ Thus, to estimate the singular vectors of $S,$ we only need to do singular value decomposition on a block matrix of $\hat{S}.$  For more detail, we refer to \cite[Section 2.1]{DXC20171}.  

\vspace{0.4cm}

This paper is organized as follows. In Section \ref{main_result}, we introduce some notations and tools that will be used in our proofs. In Section \ref{edge}, we prove the singular vector distribution near the edge. In Sections \ref{bulk}, we prove the distribution within the bulks. In particular, the Green function comparison arguments are mainly discussed in Section \ref{greenfunction_comp_edge} and Lemma \ref{comparisonbulk}. In the appendix, we prove Lemma \ref{lem_deltaestimatetileeta}. \\

\noindent{\bf Conventions.} We will always use $C$ to denote a generic large positive constant, whose value may change from one line to the next. Similarly, we use $\epsilon$  to denote a generic small positive constant. For two quantities $a_N$ and $b_N$ depending on $N$, the notation $a_N = O(b_N)$ means that $|a_N| \le C|b_N|$ for some positive constant $C>0$, and $a_N=o(b_N)$ means that $|a_N| \le c_N |b_N|$ for some positive constants $c_N\to 0$ as $N\to \infty$. We also use the notation $a_N \sim b_N$ if $a_N = O(b_N)$ and $b_N = O(a_N)$. We write the identity matrix $I_{n\times n}$ as $1$ or $I$ when there is no confusion about the dimension.

\section{Notations and tools} \label{main_result} 
In this section, we introduce some notations and tools which will be used in this paper. Throughout the paper, we will always use $\epsilon_1$ for a small constant and $D_1$ a large constant. Recall that the ESD of an $n \times n$ symmetric matrix $H$ is defined as
\begin{equation*}
F^{(n)}_H(\lambda):=\frac{1}{n} \sum_{i=1}^n \mathbf{1}_{\{\lambda_i(H) \leq \lambda\}}.
\end{equation*}
\noindent For some small constant $\tau>0,$ we define the typical domain for $z=E+i\eta$ as
\begin{equation} 
\mathbf{D}(\tau)= \{ z \in \mathbb{C}_{+}: | E | \leq \tau^{-1}, \ N^{-1+\tau} \leq \eta \leq \tau^{-1}   \} \label{DOMAIN1}.
\end{equation}

\begin{defn}[Stieltjes transform]
Recall that the Green functions for $YY^{*}$ and $Y^{*} Y$ are defined
\begin{equation}\label{def_green}
\mathcal G_1(z):=(YY^{*}-z)^{-1} , \ \ \ \mathcal G_2 (z):=(Y^{*} Y-z)^{-1} , \ \ \ z=E+i\eta \in \mathbb{C}_{+}.
\end{equation}
The Stieltjes transform of the ESD of $Y^{*}Y$ is given by
\begin{equation}
m_2(z)\equiv m_2^{(N)}(z):=\int \frac{1}{x-z}dF^{(N)}_{Y^*Y}(x)=\frac{1}{N}\sum_{i=1}^N (\mathcal G_2)_{ii}(z)=\frac{1}{N} \mathrm{Tr} \, \mathcal G_2(z). \label{EEEE}
\end{equation}
Similarly, we can also define $m_1(z)\equiv m_1^{(M)}(z):= M^{-1}\mathrm{Tr} \, \mathcal G_1(z)$.
\end{defn}

\noindent It has been shown in \cite{DXC20171, DY,KY1,  XYY} that the  linearizing block matrix  is quite useful in dealing with rectangular  matrices. 
\begin{defn}\label{def_linearHG}
For $z\in \mathbb C_+ $, we define the $(N+M)\times (N+M)$ self-adjoint matrix
 \begin{equation}\label{linearize_block}
   H \equiv H(X, \Sigma): = \left( {\begin{array}{*{20}c}
   { -zI} & z^{1/2}Y  \\
   z^{1/2} Y^{*} & {-zI}  \\
   \end{array}} \right),
 \end{equation}
and
 \begin{equation}\label{eqn_defG}
 G \equiv G (X,z):= H^{-1}.
 \end{equation}
 
\end{defn}

By Schur's complement, it is easy to check that
\begin{equation} \label{green2}
G = \left( {\begin{array}{*{20}c}
   { \mathcal{G}_1(z)} &  z^{-1/2}\mathcal{G}_1(z)Y \\
   z^{-1/2}Y^{*} \mathcal{G}_1(z) & z^{-1}Y^*\mathcal{G}_1(z)Y-z^{-1}I  \\
\end{array}} \right)= \left( {\begin{array}{*{20}c}
   z^{-1}Y \mathcal{G}_2(z)Y^{*}-z^{-1}I & z^{-1/2}Y\mathcal G_2(z)  \\
   {z^{-1/2} \mathcal{G}_2(z)Y^*} & { \mathcal G_2(z) }  \\
\end{array}} \right),
\end{equation}
for $\mathcal G_{1,2}$ defined in (\ref{def_green}).  Thus a control of $G$ yields directly a control of $(YY^*-z)^{-1}$ and $(Y^*Y-z)^{-1}$. Moreover, we have
\begin{equation*}
m_1(z)=\frac{1}{M}\sum_{i\in \mathcal I_1}G_{ii}, \ \ m_2(z)=\frac{1}{N}\sum_{\mu \in \mathcal I_2}G_{\mu\mu}.
\end{equation*}
\noindent Recall that $Y=\sum_{i=1}^{M \wedge N} \sqrt{\lambda_k} \xi_k \zeta_k^*,$ $\xi_k \in \mathbb{R}^{\mathcal{I}_1}, \ \zeta_k \in \mathbb{R}^{\mathcal{I}_2},$  by (\ref{green2}), we have
\begin{equation}\label{main_representation}
G(z) = \sum_{k=1}^{M \wedge N} \frac{1}{\lambda_k-z} \left( {\begin{array}{*{20}c}
   { \xi_k \xi^*_k} &  z^{-1/2} \sqrt{\lambda}_k \xi_k \zeta_k^* \\
   z^{-1/2} \sqrt{\lambda}_k \zeta_k \xi_k^* & \zeta_{k} \zeta_{k}^*  \\
\end{array}} \right).
\end{equation}
Denote
\begin{equation} \label{defn_psi}
\Psi(z):=\sqrt{\frac{\operatorname{Im}m(z)}{N \eta}}+\frac{1}{N \eta}, \  \
 \underline{\Sigma}_o:=\left( {\begin{array}{*{20}c}
   {\Sigma} & 0  \\
   0 & {I}  \\
   \end{array}} \right), \ \  \underline{\Sigma}:=\left( {\begin{array}{*{20}c}
   {z^{-1/2}\Sigma^{1/2}} & 0  \\
   0 & {I}  \\
   \end{array}} \right).
\end{equation}
\begin{defn} For $z \in \mathbb{C}_{+}$, we define the $\mathcal{I} \times \mathcal{I}$ matrix 
\begin{equation} \label{convergentlimit}
\Pi(z):=\left( {\begin{array}{*{20}c}
   { -z^{-1}(1+m(z)\Sigma)^{-1}} & 0  \\
   0 & {m(z)}  \\
   \end{array}} \right).
\end{equation}
We will see later from Lemma \ref{lem_anisotropic} that $G(z)$ converges to $\Pi(z)$ in probability.  
\end{defn}
\begin{rem} 
In \cite[Definition 3.2]{KY1}, the linearizing block matrix is defined as 
 \begin{equation}\label{linearize_block1}
   H_{o}: = \left( {\begin{array}{*{20}c}
   { -\Sigma^{-1}} & X  \\
   X^{*} & {-zI}  \\
   \end{array}} \right).
 \end{equation}
 It is easy to check the following relation between (\ref{linearize_block}) and (\ref{linearize_block1})
 \begin{equation} \label{linearblockrelation}
 H=\left( {\begin{array}{*{20}c}
   { z^{1/2}\Sigma^{1/2}} & 0  \\
   0 & {I}  \\
   \end{array}} \right) H_o \left( {\begin{array}{*{20}c}
   { z^{1/2}\Sigma^{1/2}} & 0  \\
   0 & {I}  \\
   \end{array}} \right).
\end{equation} 
In \cite[Definition 3.3]{KY1}, the deterministic convergent limit of $H_o^{-1}$ is
\begin{equation} \label{convergentlimit1}
 \Pi_o(z)=\left( {\begin{array}{*{20}c}
   { -\Sigma (1+m(z)\Sigma)^{-1}}& 0  \\
   0 & {m(z)}  \\
   \end{array}} \right).
\end{equation} 
Therefore, by (\ref{linearblockrelation}), we can get a similar relation between (\ref{convergentlimit}) and (\ref{convergentlimit1})
\begin{equation}\label{pirelationship}
\Pi(z)=\left( {\begin{array}{*{20}c}
   { z^{-1/2}\Sigma^{-1/2}} & 0  \\
   0 & {I}  \\
   \end{array}} \right) \Pi_o(z) \left( {\begin{array}{*{20}c}
   { z^{-1/2}\Sigma^{-1/2}} & 0  \\
   0 & {I}  \\
   \end{array}} \right).
\end{equation}
\end{rem}

%
\begin{defn} \label{minors} We introduce the notation $X^{(\mathbb{T})}$ to represent the $M \times (N- \vert \mathbb{T} \vert)$ minor of $X$ by deleting the $i$-th, $i \in \mathbb{T}$ columns of $X$. For convenience, $(\{i\})$ will be abbreviated to $(i).$ We will keep the name of indices of $X$ for $X^{(\mathbb{T})},$ that is $X_{ij}^{(\mathbb{T})}=\mathbf{1}(j \notin \mathbb{T})X_{ij}.$  We will denote 
\begin{equation}
Y^{(\mathbb{T})}=\Sigma^{1/2}X^{(\mathbb{T})}, \ \mathcal{G}_{1}^{(\mathbb{T})}=(Y^{(\mathbb{T})}Y^{(\mathbb{T})^*}-zI)^{-1}, \  \mathcal{G}_{2}^{(\mathbb{T})}=(Y^{(\mathbb{T})^{*}}Y^{(\mathbb{T})}-zI)^{-1}.
\end{equation}
Consequently, $m_1^{(\mathbb{T})}(z)=M^{-1}\operatorname{Tr} \mathcal{G}_1^{(\mathbb{T})}(z), \ m_2^{(\mathbb{T})}(z)=N^{-1}\operatorname{Tr} \mathcal{G}_2^{(\mathbb{T})}(z).$

\end{defn}

Our key ingredient is the anisotropic local law derived by Knowles and Yin in \cite{KY1}.
\begin{lem} \label{lem_anisotropic} Fix $\tau>0,$ assume (\ref{def_d_n}),  (\ref{INTRODUCTIONEQ}) and (\ref{assm_11}) hold.   Moreover, suppose that every edge $k=1, \cdots, 2p$ satisfies $a_k \geq \tau$ and every bulk component $k=1,\cdots,p$ is regular in the sense of Assumption \ref{defnregularity}. Then for all $z \in \mathbf{D}(\tau)$ and any unit vectors $\mathbf{u}, \mathbf{v} \in \mathbb{R}^{M+N}$, there exists some small constant $\epsilon_1>0$ and large constant $D_1>0$, when $N$ is large enough, with $1-N^{-D_1}$ probability, we have 
\begin{equation}\label{thm_anisotropic_eq11}
\left|<\mathbf{u},\underline{\Sigma}^{-1}(G(z)-\Pi(z)) \underline{\Sigma}^{-1}\mathbf{v}>\right| \leq N^{\epsilon_1} \Psi(z),
\end{equation}
and 
\begin{equation}\label{thm_anisotropic_eq2}
|m_2(z)-m(z)| \leq N^{\epsilon_1} \Psi(z).
\end{equation}
\end{lem}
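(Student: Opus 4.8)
The plan is to obtain Lemma~\ref{lem_anisotropic} as an essentially immediate consequence of the anisotropic and averaged local laws for sample covariance matrices proved by Knowles and Yin in \cite{KY1}, by transporting their estimates for the linearization $H_o$ of (\ref{linearize_block1}) to our linearization $H$ of (\ref{linearize_block}) through the explicit conjugation relations (\ref{linearblockrelation})--(\ref{pirelationship}). In other words, everything is already set up so that the only work is an algebraic rearrangement plus a check that the hypotheses imposed here are contained in those of \cite{KY1}.

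First I would record the reduction. Let $D$ be the block-diagonal matrix $\operatorname{diag}(z^{1/2}\Sigma^{1/2}, I)$ appearing in (\ref{linearblockrelation}), so that $H = D H_o D$ and hence $G(z) = D^{-1} G_o(z) D^{-1}$ with $G_o := H_o^{-1}$, while $\Pi(z) = D^{-1}\Pi_o(z)D^{-1}$ is exactly (\ref{pirelationship}). Recalling $\underline{\Sigma}$ and $\underline{\Sigma}_o$ from (\ref{defn_psi}), one checks at once (multiplying two commuting block-diagonal matrices, the $z^{\pm 1/2}$ factors cancelling) that $\underline{\Sigma}^{-1}D^{-1} = D^{-1}\underline{\Sigma}^{-1} = \operatorname{diag}(\Sigma^{-1}, I) = \underline{\Sigma}_o^{-1}$, whence
\begin{equation*}
\underline{\Sigma}^{-1}\bigl(G(z) - \Pi(z)\bigr)\underline{\Sigma}^{-1} = \underline{\Sigma}_o^{-1}\bigl(G_o(z) - \Pi_o(z)\bigr)\underline{\Sigma}_o^{-1}.
\end{equation*}
Thus the left-hand side of (\ref{thm_anisotropic_eq11}) is exactly the object controlled by the Knowles--Yin anisotropic local law for $G_o$, transported through this identity, and (\ref{thm_anisotropic_eq11}) follows once the hypotheses are matched: the i.i.d. and moment conditions (\ref{def_d_n}), (\ref{INTRODUCTIONEQ}), the bounds (\ref{assm_11}), the edge condition $a_k \ge \tau$ and the bulk regularity in Assumption~\ref{defnregularity} are precisely those required in \cite{KY1}, and the domain $\mathbf{D}(\tau)$ of (\ref{DOMAIN1}) is contained in the spectral domain treated there after shrinking $\tau$ if necessary; their conclusion, phrased with stochastic domination $\prec$, specializes for any fixed small $\epsilon_1>0$, large $D_1>0$ and all large $N$ to the stated high-probability bound.

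For (\ref{thm_anisotropic_eq2}) I would simply note that $D$ and $\underline{\Sigma}$ both restrict to the identity on the $\mathcal I_2\times\mathcal I_2$ block, so $G_{\mu\mu}(z) = (G_o)_{\mu\mu}(z)$ for every $\mu\in\mathcal I_2$; hence $m_2(z) = N^{-1}\sum_{\mu\in\mathcal I_2}G_{\mu\mu}(z) = N^{-1}\operatorname{Tr}\mathcal G_2(z)$ coincides with the averaged Stieltjes transform appearing in \cite{KY1}, whose deterministic counterpart is the solution $m$ of (\ref{selfconsistenteuqation1}); the averaged local law of \cite{KY1} then gives $|m_2(z) - m(z)| \prec (N\eta)^{-1} \le \Psi(z)$, which in particular yields (\ref{thm_anisotropic_eq2}).

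The one step genuinely requiring care is the bookkeeping in the hypothesis check rather than any analytic difficulty: one must verify that the regularity and edge assumptions imposed here form a subset of those under which the anisotropic law of \cite{KY1} is established — in particular that $a_k\ge\tau$ together with the third inequality in (\ref{defnregularityequation1}) is exactly what keeps every spectral edge (including the possible hard edge at $0$ when $r_N=1$) separated from the origin, so that the $z^{\pm1/2}$ factors introduced by the conjugation remain bounded on $\mathbf{D}(\tau)$, and that no outliers occur. Beyond this the argument is a pure translation: since the target error $N^{\epsilon_1}\Psi(z)$ is comparatively weak (recall $\Psi(z)\ge(N\eta)^{-1}$), no sharpening of the Knowles--Yin estimates near the edges is needed.
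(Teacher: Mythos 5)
Your proposal is correct and follows essentially the same route as the paper: the paper also deduces (\ref{thm_anisotropic_eq2}) directly from the averaged local law (3.11) of \cite{KY1}, and obtains (\ref{thm_anisotropic_eq11}) by transporting \cite[Theorem 3.6]{KY1} for $G_o$ through the conjugation relations (\ref{linearblockrelation}), (\ref{grelationship}) and (\ref{pirelationship}). Your explicit verification that $\underline{\Sigma}^{-1}D^{-1}=\underline{\Sigma}_o^{-1}$, so that $\underline{\Sigma}^{-1}(G-\Pi)\underline{\Sigma}^{-1}=\underline{\Sigma}_o^{-1}(G_o-\Pi_o)\underline{\Sigma}_o^{-1}$, is exactly the algebra the paper leaves implicit.
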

\begin{proof}
(\ref{thm_anisotropic_eq2}) is already proved in (3.11) of \cite{KY1}. We only need to prove (\ref{thm_anisotropic_eq11}).   By (\ref{linearblockrelation}), we have 
\begin{equation} \label{grelationship}
 G_o(z)=\left( {\begin{array}{*{20}c}
   { z^{1/2}\Sigma^{1/2}} & 0  \\
   0 & {I}  \\
   \end{array}} \right) G(z) \left( {\begin{array}{*{20}c}
   { z^{1/2}\Sigma^{1/2}} & 0  \\
   0 & {I}  \\
   \end{array}} \right).
\end{equation}
By \cite[Theorem 3.6]{KY1}, with $1-N^{-D_1}$ probability, we have 
\begin{equation}\label{thm_anisotropic_eq0}
|<\mathbf{u},\underline{\Sigma}_o^{-1}(G_o(z)-\Pi_o(z)) \underline{\Sigma}_o^{-1}\mathbf{v}>| \leq N^{\epsilon_1} \Psi(z).
\end{equation}
Therefore, by (\ref{pirelationship}), (\ref{grelationship}) and (\ref{thm_anisotropic_eq0}), we conclude our proof.
\end{proof}
 It is easy to derive the following corollary from Lemma \ref{lem_anisotropic}.
\begin{cor}\label{them_specific_bounds} 
Under the assumptions of Lemma \ref{lem_anisotropic}, with $1-N^{-D_1}$ probability, we have 
\begin{equation}\label{thm_anisotropic_eq12}
|<v,(\mathcal{G}_2(z)-m(z))v>|\leq N^{\epsilon_1}\Psi(z), \  |<u,(\mathcal{G}_1(z)+z^{-1}(1+m(z)\Sigma)^{-1})u>| \leq  N^{\epsilon_1}\Psi(z),
\end{equation}
where $v, u$ are unit vectors in $\mathbb{R}^N, \mathbb{R}^M$ respectively. 
\end{cor}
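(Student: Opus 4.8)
The plan is to derive Corollary~\ref{them_specific_bounds} directly from Lemma~\ref{lem_anisotropic} by choosing the test vectors $\mathbf{u},\mathbf{v}\in\mathbb{R}^{M+N}$ appropriately and reading off the block structure of $G(z)$, $\Pi(z)$, and $\underline{\Sigma}$. First I would recall from \eqref{convergentlimit} and \eqref{green2} that $G(z)$, $\Pi(z)$, and $\underline{\Sigma}$ are all block matrices with respect to the decomposition $\mathbb{R}^{M+N}=\mathbb{R}^{\mathcal{I}_1}\oplus\mathbb{R}^{\mathcal{I}_2}$: the lower-right $\mathcal{I}_2\times\mathcal{I}_2$ block of $G(z)$ is exactly $\mathcal{G}_2(z)$, that of $\Pi(z)$ is $m(z)I$, and that of $\underline{\Sigma}$ is $I$; similarly the upper-left $\mathcal{I}_1\times\mathcal{I}_1$ block of $G(z)$ is $\mathcal{G}_1(z)$, that of $\Pi(z)$ is $-z^{-1}(1+m(z)\Sigma)^{-1}$, and that of $\underline{\Sigma}$ is $z^{-1/2}\Sigma^{1/2}$.

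For the first bound, given a unit vector $v\in\mathbb{R}^N$, I take $\mathbf{v}=\mathbf{u}=(0,v)\in\mathbb{R}^{\mathcal{I}_1}\oplus\mathbb{R}^{\mathcal{I}_2}$, which is still a unit vector in $\mathbb{R}^{M+N}$. Since the lower-right block of $\underline{\Sigma}^{-1}$ is the identity, the inner product in \eqref{thm_anisotropic_eq11} collapses to $\langle v,(\mathcal{G}_2(z)-m(z)I)v\rangle$, and the right-hand side $N^{\epsilon_1}\Psi(z)$ is unchanged, giving the first inequality in \eqref{thm_anisotropic_eq12}. For the second bound, given a unit vector $u\in\mathbb{R}^M$, I take $\mathbf{u}=\mathbf{v}=(u,0)$. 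Here the upper-left block of $\underline{\Sigma}^{-1}$ is $z^{1/2}\Sigma^{-1/2}$, so the inner product in \eqref{thm_anisotropic_eq11} becomes $\langle u, z^{1/2}\Sigma^{-1/2}\bigl(\mathcal{G}_1(z)+z^{-1}(1+m(z)\Sigma)^{-1}\bigr)z^{1/2}\Sigma^{-1/2} u\rangle = \langle \Sigma^{-1/2}u,\, z\bigl(\mathcal{G}_1(z)+z^{-1}(1+m(z)\Sigma)^{-1}\bigr)\Sigma^{-1/2}u\rangle$, using that $\Sigma$ is diagonal and commutes with $(1+m(z)\Sigma)^{-1}$. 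Under \eqref{assm_11} the matrix $\Sigma$ has eigenvalues bounded between $\tau$ and $\tau^{-1}$, and on $\mathbf{D}(\tau)$ we have $|z|\le C$, so the factors $z$ and $z^{1/2}\Sigma^{-1/2}$ contribute only bounded multiplicative constants; absorbing these into $C$ (and, if desired, into $N^{\epsilon_1}$) yields the second inequality in \eqref{thm_anisotropic_eq12}. Both statements hold on the same event of probability $1-N^{-D_1}$ on which \eqref{thm_anisotropic_eq11} holds.

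The only point requiring a little care — and the closest thing to an obstacle — is bookkeeping the deterministic prefactors $z$ and $\Sigma^{\pm1/2}$ correctly so that the conjugated error term is still bounded by $N^{\epsilon_1}\Psi(z)$ up to a constant; this is immediate from the uniform bounds $\tau\le\sigma_i\le\tau^{-1}$ and $|z|\le\tau^{-1}$ on $\mathbf{D}(\tau)$, but one should note that the resulting constant $C$ in \eqref{thm_anisotropic_eq12} may differ from the implicit constant in Lemma~\ref{lem_anisotropic}, which is consistent with the paper's convention that $C$ changes line to line. No new probabilistic input is needed; the corollary is a purely algebraic consequence of the anisotropic law together with the explicit block forms of $G$, $\Pi$, and $\underline{\Sigma}$.
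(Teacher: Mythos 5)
Your route is certainly the intended one (the paper offers no details, saying only that the corollary follows easily from Lemma \ref{lem_anisotropic}), and your first inequality is done correctly: with $\mathbf{u}=\mathbf{v}=(0,v)$ the weight $\underline{\Sigma}^{-1}$ acts as the identity on the $\mathcal{I}_2$ block, the lower-right blocks of $G$ and $\Pi$ are $\mathcal{G}_2(z)$ and $m(z)I$, and \eqref{thm_anisotropic_eq11} collapses to the first bound in \eqref{thm_anisotropic_eq12} on the same event.

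The second inequality, however, contains a genuine gap. With $\mathbf{u}=\mathbf{v}=(u,0)$ the upper-left block of $\underline{\Sigma}^{-1}(G-\Pi)\underline{\Sigma}^{-1}$ is $z\,\Sigma^{-1/2}A\,\Sigma^{-1/2}$ with $A:=\mathcal{G}_1(z)+z^{-1}(1+m(z)\Sigma)^{-1}$, so what Lemma \ref{lem_anisotropic} actually gives is $|z|\,\bigl|\langle \Sigma^{-1/2}u,\,A\,\Sigma^{-1/2}u\rangle\bigr|\le N^{\epsilon_1}\Psi(z)$. To reach $|\langle u,Au\rangle|$ you must (i) rechoose the test vector as $\Sigma^{1/2}u/\|\Sigma^{1/2}u\|_2$ — the conjugation by $\Sigma^{-1/2}$ is a matrix operation, not a scalar constant, though this step is harmless because the lemma is uniform over unit vectors and $\tau\le\sigma_i\le\tau^{-1}$ — and, more importantly, (ii) divide by $z$. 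Dividing by $z$ requires a \emph{lower} bound $|z|\ge c>0$; your justification ``on $\mathbf{D}(\tau)$ we have $|z|\le C$, so the factors $z$ and $z^{1/2}\Sigma^{-1/2}$ contribute only bounded multiplicative constants'' is the wrong direction. On all of $\mathbf{D}(\tau)$ one may have $|z|$ as small as $N^{-1+\tau}$ (take $E$ near $0$), and there the unweighted bound genuinely fails when $M>N$: the matrix $YY^*$ has $M-N$ zero eigenvalues, so for $u$ in that kernel $\langle u,\mathcal{G}_1(z)u\rangle$ is of size $|z|^{-1}$ and is not cancelled by $z^{-1}\langle u,(1+m(z)\Sigma)^{-1}u\rangle$, which is far larger than $N^{\epsilon_1}\Psi(z)$. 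This is precisely why the weight $\underline{\Sigma}^{-1}$ appears in Lemma \ref{lem_anisotropic} and why the paper imposes $a_k\ge\tau$ and Assumption \ref{assum_nozero}: in every application the spectral parameter satisfies $E\ge\tau/2$, hence $|z|\ge\tau/2$, and then your argument closes, giving $|\langle u,Au\rangle|\le \tau^{-1}|z|^{-1}N^{\epsilon_1}\Psi(z)\le C N^{\epsilon_1}\Psi(z)$ with the constant absorbed as usual. So the proof needs to state explicitly that $|z|$ is bounded below (as in the paper's regime of use) and to invert the factor $z$ there, rather than treating $z$ as a bounded multiplicative constant.
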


We use the following lemma to characterize the rigidity of eigenvalues within each of the bulk component, which can be found in  \cite[Theorem 3.12]{KY1}.
\begin{lem}\label{lem_rigidity}
Fix $\tau>0$,  assume (\ref{def_d_n}),  (\ref{INTRODUCTIONEQ}) and (\ref{assm_11}) hold. Moreover, suppose that every edge $k=1, \cdots, 2p$ satisfies $a_k \geq \tau$ and every bulk component $k=1,\cdots,p$ is regular in the sense of Assumption \ref{defnregularity}. Recall $N_k$ is the number of eigenvalues within each bulk, then we have that for $ i=1,\cdots, N_k$ satisfying $\gamma_{k,i} \geq \tau$ and $k=1,\cdots, p$, with $1-N^{-D_1}$ probability, we have
\begin{equation} \label{thm_rigidity_equ}
\vert \lambda_{k,i}-\gamma_{k,i} \vert \leq (i \wedge(N_k+1-i))^{-\frac{1}{3}} N^{-\frac{2}{3}+\epsilon_1}.
\end{equation} 
\end{lem}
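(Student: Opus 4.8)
The plan is to derive the rigidity bound (\ref{thm_rigidity_equ}) from the averaged local law (\ref{thm_anisotropic_eq2}) by the standard Helffer--Sjöstrand argument, in the spirit of \cite[Section 3]{KY1}. First I would upgrade (\ref{thm_anisotropic_eq2}) so that it holds simultaneously for all $z\in\mathbf{D}(\tau)$: take an $N^{-C}$-net of $\mathbf{D}(\tau)$, apply a union bound, and use that $z\mapsto m_2(z)$ and $z\mapsto m(z)$ are Lipschitz with constant $\le C\eta^{-2}$ on the net scale. At the same time one records the complementary inequality $\operatorname{Im}m_2(z)\le N^{\epsilon_1}(\operatorname{Im}m(z)+(N\eta)^{-1})$, which is immediate from the same estimate, together with the a priori containment of the spectrum: $\lambda_1=\|Q\|\le\|\Sigma\|\,\|X\|^2\le C$ with probability $1-N^{-D_1}$ by standard norm bounds on $X$, so that all relevant spectral parameters stay inside $\mathbf{D}(\tau)$ by the Ordering statement of Lemma \ref{propertiesofrho}.

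Next I would introduce the empirical and deterministic counting functions $n(E):=N F^{(N)}_{Y^*Y}(E)$ and $n_\rho(E):=N\int_{-\infty}^{E}\rho(x)\,dx$. Applying the Helffer--Sjöstrand formula to the indicator of $(-\infty,E]$ smoothed at imaginary scale $\ell\sim N^{-1+\epsilon_1}$, one writes $n(E)-n_\rho(E)$ as a contour integral of $\operatorname{Im}(m_2-m)$ over $\mathbf{D}(\tau)$ plus controlled boundary terms; feeding in the uniform local law and the square-root behavior (\ref{squarerootequation}) of $\rho$ at every edge $a_k$ yields $|n(E)-n_\rho(E)|\le N^{\epsilon_1}$ for all $E$. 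Because $\rho\equiv 0$ on each gap between consecutive bulk components, the same estimate shows $n$ is constant on those gaps, hence no eigenvalues fall strictly between bulk components; this is exactly what makes the relabeling (\ref{relabel}) well defined and identifies the $i$-th eigenvalue of the $k$-th component.

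Finally I would invert the counting-function estimate, with the edge improvement built in. Near the right edge of the $k$-th bulk, (\ref{squarerootequation}) forces $a_{2k-1}-\gamma_{k,i}\sim(i/N)^{2/3}$, hence $\rho(\gamma_{k,i})\sim(i/N)^{1/3}$. To resolve $\lambda_{k,i}$ one reruns the local-law input at the spectral scale $\eta_i\sim i^{-1/3}N^{-2/3}$ adapted to that eigenvalue (rather than the worst-case scale $N^{-1+\epsilon_1}$), and combines $|n(E)-n_\rho(E)|\le N^{\epsilon_1}$ with a dyadic decomposition in the distance to the edge; this converts the $O(N^{\epsilon_1})$ fluctuation of $n$ into $|\lambda_{k,i}-\gamma_{k,i}|\le i^{-1/3}N^{-2/3+\epsilon_1}$ for $i\le N_k/2$. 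The symmetric argument at the left edge $a_{2k}$ handles $i\ge N_k/2$, and the two bounds together give the factor $(i\wedge(N_k+1-i))^{-1/3}$.

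The main obstacle is the bookkeeping in this last step: one must track precisely how the factor $\Psi(z)$ interacts with $\operatorname{Im}m(z)\sim\sqrt{\kappa+\eta}$, where $\kappa$ is the distance to the nearest edge, in order to extract the optimal $i$-dependent gain rather than a uniform $N^{-2/3+\epsilon_1}$ bound, while simultaneously ruling out eigenvalues in the gaps with a square-root-accurate error. All of this is carried out in \cite[Theorem 3.12]{KY1}, to which we refer for the details.
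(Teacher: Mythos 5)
The paper offers no proof of this lemma at all: it is quoted verbatim from \cite[Theorem 3.12]{KY1}, and your proposal ultimately defers to the same reference after sketching the standard local-law $\to$ Helffer--Sj\"ostrand counting-function $\to$ edge-inversion argument that underlies that theorem. Your outline is an accurate description of that argument, so this is essentially the same approach as the paper's (a citation to Knowles--Yin), just with a correct sketch of the machinery attached.
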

\noindent Within the bulk, we have stronger result. For small $\tau^{\prime}>0$, denote 
\begin{equation}  \label{domainbulk}
\mathbf{D}_{k}^b:= \{ z \in \mathbf{D}(\tau): E \in [a_{2k}+\tau^{\prime}, a_{2k-1}-\tau^{\prime}] \}, \ k=1,2, \cdots, p,
\end{equation}
as the bulk spectral domain, then \cite[Theorem 3.15]{KY1} gives the following result.
\begin{lem}\label{ridgitybulk} Fix $\tau, \tau^{\prime}>0,$ assume (\ref{def_d_n}), (\ref{INTRODUCTIONEQ}) and (\ref{assm_11}) hold and the bulk component $k=1,\cdots,2p$ is regular in the sense of (ii) of Assumption \ref{defnregularity}. Then for all $i=1,\cdots, N_k$ satisfying $\gamma_{k,i} \in [a_{2k}+\tau^{\prime}, a_{2k-1}-\tau^{\prime}]$, we have (\ref{thm_anisotropic_eq11}) and (\ref{thm_anisotropic_eq2}) hold uniformly for all $z \in \mathbf{D}_{k}^b$  and with $1-N^{-D_1}$ probability, 
\begin{equation} \label{lem_rigity_eq}
|\lambda_{k,i}-\gamma_{k,i}| \leq {N}^{-1+\epsilon_1}.
\end{equation} 
\end{lem}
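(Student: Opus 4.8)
The statement to establish is Lemma~\ref{ridgitybulk}, which asserts both the anisotropic local law and the optimal rigidity estimate $|\lambda_{k,i}-\gamma_{k,i}|\le N^{-1+\epsilon_1}$ uniformly on the bulk spectral domain $\mathbf{D}_k^b$. Since the excerpt attributes this result to \cite[Theorem 3.15]{KY1}, my plan is to reduce it to the ingredients already recorded in the excerpt rather than to reprove it from scratch. The first step is to note that the anisotropic local law \eqref{thm_anisotropic_eq11} and the averaged law \eqref{thm_anisotropic_eq2} have already been proved on the full typical domain $\mathbf{D}(\tau)$ in Lemma~\ref{lem_anisotropic}, and the bulk spectral domain $\mathbf{D}_k^b$ defined in \eqref{domainbulk} is a subset of $\mathbf{D}(\tau)$. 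Thus the first claim of Lemma~\ref{ridgitybulk}, namely that \eqref{thm_anisotropic_eq11} and \eqref{thm_anisotropic_eq2} hold uniformly on $\mathbf{D}_k^b$, follows immediately from Lemma~\ref{lem_anisotropic} together with a standard union bound over a polynomial-size net of $z$-values in $\mathbf{D}_k^b$ and the Lipschitz continuity of $G(z)$, $\Pi(z)$, $m(z)$ in $z$ (the Lipschitz constants are polynomially bounded on $\mathbf{D}_k^b$ since $\eta\ge N^{-1+\tau}$ and the spectral edges are bounded away from $E$).

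The second and substantive step is to upgrade the weak rigidity of Lemma~\ref{lem_rigidity} to the optimal $N^{-1+\epsilon_1}$ rigidity inside the bulk. The mechanism is the improved square-root-free lower bound on the density: by (ii) of Assumption~\ref{defnregularity}, the density $\rho$ is bounded below by a constant $c_{\tau,\tau'}$ on $[a_{2k}+\tau', a_{2k-1}-\tau']$, so the classical locations $\gamma_{k,i}$ in this window have spacing $\asymp N^{-1}$. The key analytic input is that on $\mathbf{D}_k^b$ one has $\operatorname{Im} m(z)\asymp 1$ (again from the lower bound on $\rho$), so the error parameter $\Psi(z)=\sqrt{\operatorname{Im} m(z)/(N\eta)}+1/(N\eta)$ reduces to $\Psi(z)\lesssim 1/\sqrt{N\eta}$. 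Feeding this into \eqref{thm_anisotropic_eq2}, one controls $\operatorname{Im} m_2(z)$, hence the empirical eigenvalue counting function $\mathfrak{n}(E)=N^{-1}\#\{i:\lambda_i\le E\}$ in the bulk via the Helffer–Sjöstrand representation: $|\mathfrak{n}(E_1)-\mathfrak{n}(E_2) - (\rho\text{-mass of }[E_1,E_2])| \le N^{-1+\epsilon_1}$ for $E_1,E_2$ in the bulk window at distance $\gtrsim N^{-1+\tau}$. Integrating the local law against a smooth cutoff at scale $\eta\sim N^{-1+\tau}$ and using the averaged bound $|m_2(E+i\eta)-m(E+i\eta)|\le N^{\epsilon_1}/\sqrt{N\eta}$ yields the counting-function estimate, and inverting it (the density is bounded above and below on the window) gives $|\lambda_{k,i}-\gamma_{k,i}|\le N^{-1+\epsilon_1}$ for all $i$ with $\gamma_{k,i}\in[a_{2k}+\tau', a_{2k-1}-\tau']$.

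The main obstacle, and the only place where real work is needed, is the passage from the averaged/anisotropic local law at scale $\eta\sim N^{-1+\tau}$ down to eigenvalue locations at the optimal scale $N^{-1}$: one must rule out, with overwhelming probability, that eigenvalues pile up or leave gaps on scales between $N^{-1}$ and $N^{-1+\tau}$. This is handled by the standard argument of comparing the Stieltjes transform $m_2$ at the smallest available scale against $m$, using that $\operatorname{Im} m_2(E+i\eta)\ge c\,\eta^{-1}\cdot(\text{number of eigenvalues within }\eta\text{ of }E)/N$ to convert an upper bound on $\operatorname{Im} m_2$ into an upper bound on local eigenvalue density, and then a dyadic/bootstrap argument to push the scale down — this is exactly the content of the proof of \cite[Theorem 3.15]{KY1}, and since the bulk density is regular (bounded below) there is no square-root edge behavior to contend with, which is what makes the optimal $N^{-1}$ rate (as opposed to the $N^{-2/3}$-type rate of Lemma~\ref{lem_rigidity} near the edges) attainable. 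I would therefore present the proof as: (1) cite Lemma~\ref{lem_anisotropic} for the local laws on $\mathbf{D}_k^b\subset\mathbf{D}(\tau)$; (2) observe $\operatorname{Im} m\asymp 1$ and $\Psi\lesssim(N\eta)^{-1/2}$ on $\mathbf{D}_k^b$; (3) derive the counting-function estimate by Helffer–Sjöstrand; (4) invert it using the regular (bounded-below-and-above) density to conclude \eqref{lem_rigity_eq}, referring to \cite{KY1} for the routine details of the scale-reduction.
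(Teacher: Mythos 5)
There is a genuine gap in the quantitative heart of your sketch. The paper itself gives no proof of this lemma: it is imported verbatim as \cite[Theorem 3.15]{KY1}, so deferring the hard part to that reference is consistent with the paper. But your steps (2)--(4), presented as a derivation from the tools actually recorded in this paper, do not close. The only averaged law available here is \eqref{thm_anisotropic_eq2}, with error $N^{\epsilon_1}\Psi(z)$, and in the bulk $\Psi(z)\sim (N\eta)^{-1/2}$. Feeding an error of size $N^{\epsilon_1}(N\sigma)^{-1/2}$ into the Helffer--Sj{\"o}strand representation and integrating over scales $\sigma$ produces a counting-function error of order roughly $N^{-1/2+\epsilon_1}$ (equivalently, an uncertainty of order $N^{1/2}$ eigenvalues), not the $N^{-1+\epsilon_1}$ you claim; already at the smallest admissible scale $\eta=N^{-1+\tau}$ one has $(N\eta)^{-1/2}=N^{-\tau/2}$, which is nowhere near $N^{-1}$ precision. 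Optimal bulk rigidity at scale $N^{-1}$ requires the \emph{strong} averaged local law $|m_2(z)-m(z)|\leq N^{\epsilon}(N\eta)^{-1}$, obtained in \cite{KY1} by fluctuation averaging; that estimate is not among the results quoted in this paper, so it must either be imported separately or the lemma must simply be cited as the paper does. As written, your step ``using the averaged bound $N^{\epsilon_1}/\sqrt{N\eta}$ yields the counting-function estimate $\ldots \leq N^{-1+\epsilon_1}$'' is the step that fails.

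Two smaller points. First, the lemma assumes only bulk regularity, i.e.\ part (ii) of Assumption \ref{defnregularity}, whereas Lemma \ref{lem_anisotropic} additionally assumes $a_k\geq\tau$ for every edge; so claiming the uniform local laws on $\mathbf{D}_k^b$ follow ``immediately from Lemma \ref{lem_anisotropic}'' silently strengthens the hypotheses --- in \cite{KY1} the bulk local law on $\mathbf{D}_k^b$ is proved under bulk regularity alone, which is why it is stated as a separate theorem there. Second, your step (4) (inverting the counting estimate using the density bounds) and the general architecture (local law $\Rightarrow$ counting function $\Rightarrow$ rigidity) are the right mechanism and are indeed what \cite{KY1} carries out; the correct fix is therefore either to cite \cite[Theorems 3.15--3.16]{KY1} outright, as the paper does, or to import the $(N\eta)^{-1}$ averaged law before running your steps (3)--(4).
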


%
 
 As discussed in \cite[Remark 3.13]{KY1}, Lemma \ref{lem_anisotropic} and  \ref{lem_rigidity} imply the completely delocalization of the singular vectors.
\begin{lem} \label{de_local} Fix $\tau>0,$ under the assumptions of Lemma \ref{lem_anisotropic}, for any $i, \mu$ such that $\gamma_i, \gamma_{\mu} \geq \tau,$ with $1-N^{-D_1}$ probability, we have
\begin{equation} \label{delocalequation}
\max_{i,s_1} \vert \xi_i(s_1) \vert^2+\max_{\mu,s_2} \vert \zeta_{\mu}(s_2) \vert^2 \leq N^{-1+\epsilon_1}.
\end{equation}
\end{lem}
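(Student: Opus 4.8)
# Proof Proposal for Lemma \ref{de_local}

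\textbf{Overall approach.} The claim is a standard consequence of the entrywise/anisotropic local law combined with the spectral decomposition \eqref{main_representation} of $G$, evaluated at a suitable spectral point near each eigenvalue. The plan is to extract the delocalization bound $\max_{s_1}|\xi_i(s_1)|^2 \le N^{-1+\epsilon_1}$ (and similarly for $\zeta_\mu$) by choosing a coordinate vector $\mathbf u = \mathbf e_{s_1}$ in Lemma \ref{lem_anisotropic} and then picking the spectral parameter $z = \gamma_i + \mathrm i\eta$ with $\eta = N^{-2/3+\epsilon_1}$ (or $\eta = N^{-1+\epsilon_1}$ in the bulk), so that a single term in the sum \eqref{main_representation} dominates and is controlled from below while the whole of $\operatorname{Im}G$ is controlled from above by the local law.

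\textbf{Key steps.} First I would write, from \eqref{main_representation}, the positivity identity
\begin{equation*}
\operatorname{Im}G_{s_1 s_1}(z) = \sum_{k=1}^{M\wedge N} \frac{\eta}{(\lambda_k - E)^2 + \eta^2}\, |\xi_k(s_1)|^2 \quad (s_1 \in \mathcal I_1),
\end{equation*}
with the analogous formula using $|\zeta_k(s_2)|^2$ for $s_2 \in \mathcal I_2$. Since all summands are nonnegative, for the particular index $i$ we have the lower bound
\begin{equation*}
|\xi_i(s_1)|^2 \le \frac{(\lambda_i - E)^2 + \eta^2}{\eta}\,\operatorname{Im}G_{s_1 s_1}(E + \mathrm i\eta).
\end{equation*}
Next, by the rigidity estimate (Lemma \ref{lem_rigidity}, or Lemma \ref{ridgitybulk} in the bulk), on the good event $|\lambda_i - \gamma_i| \le N^{-2/3+\epsilon_1}$, so choosing $E = \gamma_i$ and $\eta = N^{-2/3+\epsilon_1}$ gives $(\lambda_i - E)^2 + \eta^2 \lesssim \eta^2$, hence $|\xi_i(s_1)|^2 \lesssim \eta\, \operatorname{Im}G_{s_1 s_1}(z)$. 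Then I apply Lemma \ref{lem_anisotropic} with $\mathbf u = \mathbf v = \underline\Sigma\, \mathbf e_{s_1}$ (up to normalization, using $\tau \le \sigma_M \le \sigma_1 \le \tau^{-1}$ from \eqref{assm_11} so that $\|\underline\Sigma \mathbf e_{s_1}\|_2 \sim 1$): this controls $G_{s_1 s_1}(z)$ by $\Pi_{s_1 s_1}(z) + N^{\epsilon_1}\Psi(z)$, and since $\operatorname{Im}\Pi_{s_1 s_1}(z) = O(1)$ and $\Psi(z) = O(1)$ uniformly on $\mathbf D(\tau)$, we get $\operatorname{Im}G_{s_1 s_1}(z) \le N^{\epsilon_1}$ (after renaming $\epsilon_1$). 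Combining, $|\xi_i(s_1)|^2 \lesssim \eta\, N^{\epsilon_1} = N^{-2/3 + 2\epsilon_1}$, which is much stronger than $N^{-1+\epsilon_1}$; I would then note that the condition $\gamma_i, \gamma_\mu \ge \tau$ is exactly what places $z = \gamma_i + \mathrm i\eta$ in $\mathbf D(\tau)$ and makes the rigidity hypothesis applicable. Finally a union bound over the polynomially many indices $i, s_1, \mu, s_2$ (absorbing the loss into $\epsilon_1$ and $D_1$) upgrades the high-probability bound to the uniform statement \eqref{delocalequation}; the same argument applied to $G_{s_2 s_2}$ with the lower-right block of \eqref{main_representation} handles the $\zeta_\mu$ term.

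\textbf{Main obstacle.} The argument is essentially routine given the machinery already assembled; the only genuinely delicate point is the bookkeeping near the edges of the bulk components, where rigidity only gives $|\lambda_{k,i} - \gamma_{k,i}| \le (i \wedge (N_k + 1 - i))^{-1/3} N^{-2/3+\epsilon_1}$ and the density $\rho$ degenerates like a square root. One must check that choosing $\eta = N^{-2/3+\epsilon_1}$ still yields $(\lambda_i - \gamma_i)^2 + \eta^2 = O(\eta^2)$ uniformly (which it does, since the rigidity scale is at most $N^{-2/3+\epsilon_1}$ for all $i$), and that $\operatorname{Im}m(z)$, hence $\Psi(z)$ and $\operatorname{Im}\Pi_{s_1 s_1}(z)$, remain $O(1)$ there — this follows from the square-root edge behavior \eqref{squarerootequation} and is already implicit in Lemma \ref{lem_anisotropic} being stated on all of $\mathbf D(\tau)$. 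So in practice one simply invokes Lemma \ref{lem_anisotropic} and Lemma \ref{lem_rigidity} as black boxes and the rest is the two-line sandwich above.
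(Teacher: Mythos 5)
There is a genuine gap, and it sits exactly where you locate the ``main obstacle.'' Your sandwich gives $|\xi_i(s_1)|^2 \lesssim \eta\, N^{\epsilon_1}$ once $(\lambda_i-E)^2+\eta^2=O(\eta^2)$, so with your choice $E=\gamma_i$, $\eta=N^{-2/3+\epsilon_1}$ you obtain $|\xi_i(s_1)|^2\lesssim N^{-2/3+2\epsilon_1}$. You assert this is ``much stronger than $N^{-1+\epsilon_1}$,'' but the inequality goes the other way: $N^{-2/3+2\epsilon_1}\gg N^{-1+\epsilon_1}$, so your bound is strictly \emph{weaker} than the one claimed in \eqref{delocalequation} and does not prove the lemma. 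The obstruction is not mere bookkeeping: with a deterministic center $E=\gamma_i$ you can never take $\eta$ below the rigidity scale, and near the edges of a bulk component $|\lambda_i-\gamma_i|$ fluctuates on scale $N^{-2/3}$ (Lemma \ref{lem_rigidity}); if you tried $\eta=N^{-1+\epsilon_1}$ there, the factor $((\lambda_i-\gamma_i)^2+\eta^2)/\eta$ would be of order $N^{-1/3}$, ruining the estimate. Your parenthetical bulk variant ($E=\gamma_i$, $\eta=N^{-1+\epsilon_1}$, using Lemma \ref{ridgitybulk}) does give the right order in the interior of a bulk component, but the statement requires the $N^{-1+\epsilon_1}$ bound uniformly, including for edge eigenvectors, and there your argument fails as written.

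The missing idea is to center the spectral parameter at the \emph{random} eigenvalue itself, which is what the paper does: it first uses \eqref{thm_anisotropic_eq12} to get $\operatorname{Im}G_{s_1 s_1}(z)=O(1)$ (your use of the local law for this upper bound is the same), then sets $z_0=E+\mathrm i\eta_0$ with $\eta_0=N^{-1+\epsilon_1}$ and chooses $E=\lambda_k$, so the $k$-th term of \eqref{main_representation} equals $\eta_0^{-1}|\xi_k(s_1)|^2$ exactly, with no rigidity loss; positivity of the other terms then yields $|\xi_k(s_1)|^2\le C\eta_0=CN^{-1+\epsilon_1}$, and the same for $\zeta$ via $G_{\mu\mu}$. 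Rigidity and the hypothesis $\gamma_i,\gamma_\mu\ge\tau$ are only needed to guarantee that $z_0$ lies in the domain where the local law holds (uniformly in $E$), not to control $|\lambda_i-\gamma_i|$ against $\eta$. If you replace your choice of $(E,\eta)$ by this one, the rest of your argument (spectral decomposition, dropping nonnegative terms, union bound over indices) goes through and coincides with the paper's proof.
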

\begin{proof}  By (\ref{thm_anisotropic_eq12}),   with $1-N^{-D_1}$ probability, we have 
$ \max \{ \operatorname{Im} G_{ii}(z), \ \operatorname{Im} G_{\mu \mu}(z) \}=O(1). $
Choose $z_0=E+i\eta_0 $ with $\eta_0 =  N^{-1+\epsilon_1}$ and use the spectral decomposition (\ref{main_representation}), we have
\begin{equation}\label{spectraldecomp}
\sum_{k=1}^{N \wedge M} \frac{ \eta_0}{(E-\lambda_k)^2+\eta_0^2} \vert \xi_{k}(i) \vert^2= \operatorname{Im} G_{ii}(z_0) = O(1),
\end{equation}
\begin{equation}\label{spectraldecomp1}
\sum_{k=1}^{N \wedge M} \frac{ \eta_0}{(E-\lambda_k)^2+\eta_0^2} \vert \zeta_{k}(\mu) \vert^2   = \operatorname{Im} G_{\mu \mu}(z_0) = O(1),
\end{equation}
hold with $1-N^{-D_1}$ probability. Choosing $E=\lambda_k$ in (\ref{spectraldecomp}) and (\ref{spectraldecomp1}), we finish the proof.
\end{proof}

\section{Singular vectors near the edges} \label{edge}
In this section, we prove the universality for the distributions of the edge singular vectors Theorem \ref{thm_edge} and \ref{thm_edge_sev}, as well as the joint distribution between singular values and singular vectors Corollary \ref{corollaryedge} and \ref{coroedgesev}.  The main identities on which we will rely are  
\begin{equation} \label{keykeykeyindentity}
\tilde{G}_{ij}=\sum_{\beta=1}^{M \wedge N} \frac{\eta}{(E-\lambda_{\beta})^2+\eta^2} \xi_{\beta}(i) \xi_{\beta}(j),\ \tilde{G}_{\mu \nu}=\sum_{\beta=1}^{M \wedge N} \frac{\eta}{(E-\lambda_{\beta})^2+\eta^2} \zeta_{\beta}(\mu) \zeta_{\beta}(\nu),
\end{equation}
where $\tilde{G}_{ij}, \tilde{G}_{\mu \nu}$ are defined as 
\begin{equation} \label{defn_g_tilde}
\tilde{G}_{ij}:=\frac{1}{2i}(G_{ij}(z)-G_{ij}(\bar{z})),\  \tilde{G}_{\mu \nu}:=\frac{1}{2i}(G_{\mu \nu}(z)-G_{\mu \nu}(\bar{z})).
\end{equation} 
Due to similarity, we focus our proof on the right singular vectors. The proofs reply on three main steps: (i). Writing $N\zeta_{\beta}(\mu) \zeta_{\beta}(\nu)$ as an integral of $\tilde{G}_{\mu\nu}$ over a random interval with size $O(N^{\epsilon}\eta)$, where $\epsilon>0$ is a small constant and $\eta=N^{-2/3-\epsilon_0}, \epsilon_0>0$ will be chosen later; (ii). Replacing the sharp characteristic function getting from step (i) with a smooth cutoff function $q$ in terms of the Green function; (iii). Using the Green function comparison argument to compare the distribution of the singular vectors between the ensembles $X_G$ and $X_V.$ 

We will follow the proof strategy of \cite[Section 3]{KY} and slightly modify the detail. Specially, the choices of random interval in step (i) and the smooth function $q$ in step (ii) are different due to the fact that we have more than one bulk components. And the Green function comparison argument is also slightly different as we use the linearization matrix (\ref{main_representation}).

We  mainly focus on a single bulk component, firstly prove the singular vector distribution and then extend the results to singular values. The results containing several bulk components will follow after minor modification. We first prove the following result for the right singular vector.

\begin{lem} \label{lem_onedimensionalcase} For $Q_V=\Sigma^{1/2}X_VX_V^* \Sigma^{1/2}$ satisfying Assumption \ref{assu_main}, let  $\mathbb{E}^G, \mathbb{E}^V$ denote the expectations with respect to $X_G, X_V.$ Consider the $k$-th bulk component,  $k=1,2,\cdots,p,$ and $l$ defined in (\ref{defnalphaprime1}) or (\ref{defnalphaprime2}), under Assumption \ref{defnregularity} and \ref{assum_nozero},  for any choices of indices $\mu, \nu \in \mathcal{I}_2$, there exists a $\delta \in (0,1),$ when $l \leq N_k^{\delta},$ we have
\begin{equation*}
\lim_{N \rightarrow \infty} [\mathbb{E}^V-\mathbb{E}^G] \theta( N \zeta_{\alpha^{\prime}}(\mu) \zeta_{\alpha^{\prime}}(\nu))=0,
\end{equation*}
where $ \theta$ is a smooth function in $\mathbb{R}$ that satisfies
\begin{equation} \label{defn_theta1d}
\vert \theta^{(3)}(x) \vert \leq C_1 (1+ \vert x \vert)^{C_1}, \ x \in \mathbb{R},  \ \text{with some constant} \ C_1>0. 
\end{equation}
\end{lem}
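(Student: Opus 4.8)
The plan is to implement the three-step strategy of \cite[Section 3]{KY} outlined just above, adapted to the linearized block matrix $G$ and to the presence of several bulk components. First I would fix the $k$-th bulk component and the relevant edge, say the right edge $a_{2k-1}$ so that $\alpha^{\prime}$ is of the form \eqref{defnalphaprime1} with $l$ associated to it, and set the scale $\eta = N^{-2/3-\epsilon_0}$ for a small $\epsilon_0>0$ to be chosen, together with a small auxiliary exponent $\epsilon>0$. Using \eqref{keykeykeyindentity}, I would express $N\zeta_{\alpha^{\prime}}(\mu)\zeta_{\alpha^{\prime}}(\nu)$ as an integral $\frac{N}{\pi}\int_{\mathcal E} \tilde G_{\mu\nu}(E+i\eta)\,dE$ over a random interval $\mathcal E$ whose endpoints are pinned between consecutive eigenvalues $\lambda_{k,l\pm1}$ and $\lambda_{k,l}$; the rigidity estimate \eqref{thm_rigidity_equ} of Lemma \ref{lem_rigidity} (valid because Assumption \ref{assum_nozero} and the edge/bulk regularity of Assumption \ref{defnregularity} hold, and since $l \le N_k^{\delta}$ we are well inside the regime $i\wedge(N_k+1-i)$ comparable to $l$) guarantees that $\mathcal E$ has length $O(N^{\epsilon}\eta)$ with overwhelming probability, and that the tails of the Cauchy kernel coming from eigenvalues outside $\mathcal E$ contribute only $o(1)$. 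This is Step (i); the contribution of the other bulk components is controlled by the gap condition $\min_{l\ne k}|a_k-a_l|\ge\tau$ in \eqref{defnregularityequation1}, which separates $\mathcal E$ from the spectrum of every other bulk by an $O(1)$ distance.

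For Step (ii) I would replace the sharp indicator $\mathbf 1_{\mathcal E}$, equivalently the sharp counting function $\#\{\beta: \lambda_\beta \ge E\}$, by a smooth cutoff $q$ built from $\operatorname{Im}\operatorname{Tr}\mathcal G_2$ (or $\operatorname{Im}\operatorname{Tr} G$ restricted to the $\mathcal I_2$ block) at scale $\eta$, exactly as in \cite{KY} but with the location of the mollifier adjusted to sit near $a_{2k-1}$ rather than the global edge, and rescaled by $\varpi_{2k-1}$ and $N^{2/3}$ so that the relevant eigenvalue statistic is $\mathbf q_{2k-1,l}$. The error in this replacement is controlled by the rigidity estimate together with the anisotropic local law \eqref{thm_anisotropic_eq11}–\eqref{thm_anisotropic_eq12}: both $\tilde G_{\mu\nu}$ and the trace quantities are within $N^{\epsilon_1}\Psi(z)$ of their deterministic limits on $\mathbf D(\tau)$, and $\Psi(z)$ is small at the chosen scale. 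The outcome is that $\mathbb E^{V}\theta(N\zeta_{\alpha^{\prime}}(\mu)\zeta_{\alpha^{\prime}}(\nu))$ equals, up to $o(1)$, the expectation of a smooth functional of entries of $G(E+i\eta)$ for $E$ ranging over an $N^{\epsilon}\eta$-neighbourhood of $a_{2k-1}$, uniformly in the ensemble.

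Step (iii) is the Green function comparison argument: since the smoothed functional depends only on $O(1)$ entries of $G$ at spectral scale $\eta = N^{-2/3-\epsilon_0}$, and the first two moments of $X_V$ and $X_G$ match, a Lindeberg-type replacement of the entries of $X$ one at a time — estimating each replacement error by a resolvent expansion and the a priori bounds of Lemma \ref{lem_anisotropic} and Corollary \ref{them_specific_bounds} — shows $[\mathbb E^{V}-\mathbb E^{G}]$ applied to this functional is $o(1)$; this is essentially \cite[Section 3]{KY}, with the only change being that one expands the linearization $H$ in \eqref{linearize_block} and uses the representation \eqref{main_representation}, so the bookkeeping of indices in $\mathcal I_1\cup\mathcal I_2$ replaces the Wigner bookkeeping. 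I expect the main obstacle to be Step (ii): one must choose the mollifier and the scale $\eta$ so that the replacement error is genuinely $o(1)$ while the localization of $\mathcal E$ around the single edge $a_{2k-1}$ is not spoiled by the neighbouring bulk components or by the fact that $l$ is allowed to grow like $N_k^{\delta}$ rather than being $O(1)$; keeping track of the $l$-dependence in the rigidity bound \eqref{thm_rigidity_equ} and ensuring it wins against $N^{\epsilon}$ forces the constraint on $\delta$ and is the delicate quantitative point of the argument.
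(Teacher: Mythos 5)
Your three-step plan is the same strategy the paper follows (random-interval representation of $N\zeta_{\alpha'}(\mu)\zeta_{\alpha'}(\nu)$, replacement of the sharp indicator by a smoothed counting function, Green function comparison, all modeled on \cite[Section 3]{KY} with the linearization (\ref{linearize_block})), but two points in your plan would fail if carried out as written. The serious one is Step (iii). It is not true that the smoothed functional depends on $O(1)$ entries of $G$ (it involves $\tilde G_{\mu\nu}=\eta\sum_k G_{\mu k}\overline G_{\nu k}$ and the full trace $m_2$), and, more importantly, two-moment matching together with the a priori bounds of Lemma \ref{lem_anisotropic} does not make the Lindeberg replacement errors summable on its own: the third-order terms of the expansion (\ref{resolvent}) carry $(X_{i\mu_1})^3$, which is \emph{not} matched between $X_V$ and $X_G$, and the naive size estimate (cf.\ (\ref{bbbbdddd}), integrated over $I$ of length $\sim N^{-2/3+\epsilon}$) gives a per-replacement error of order $N^{-11/6+C\epsilon_0}$, which after summing over the $O(N^2)$ replacements diverges. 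The paper closes this gap with the counting parameter $s$ of Lemma \ref{lem_random_bound}, the decompositions (\ref{deltadecop1})--(\ref{deltadecop3}), and crucially the improved bound (\ref{11212121}): one expands $S_{\nu\mu_1}$ via (\ref{resolvent2}) so that the leading term has vanishing partial expectation $\mathbb{E}_\gamma$, while the remainder gains an extra factor $N^{-1/3+C\epsilon_0}$ from (\ref{deq}) and (\ref{largederivation}); only then is the bound $N^{-5/6+C\epsilon_0}\cdot N^{-4/3+C\epsilon_0}=o(N^{-2})$ in (\ref{lastlastlast}) obtained. This cancellation, not Step (ii), is the delicate part of the proof.

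A smaller but genuine imprecision is in Step (i): what must be made small is not the raw contribution of the eigenvalues $\lambda_\beta$, $\beta\neq\alpha'$, of the same bulk — the paper's bound (\ref{greenfunctionbound2}) on that quantity is $N^{\delta+2\epsilon_1}$, i.e.\ large — but the indicator-restricted sum on the right-hand side of (\ref{MVT}), and it must beat the factor $N^{C_1(\delta+2\epsilon_1)}$ produced by the mean value theorem and the polynomial growth (\ref{defn_theta1d}) of $\theta'$. This is exactly why the paper partitions $I=I_1\cup I_2$ with the intermediate exponent $\epsilon'$ of (\ref{key_ris}) and why $\epsilon$, $\delta$, $\epsilon_0$ and $C_1$ are tied together in (\ref{defn_epsion}); your phrase ``the tails of the Cauchy kernel contribute only $o(1)$'' hides this step. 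Finally, in Step (ii) the mollified statistic is the counting function $\mathcal N(E^-,E_U)$ tested against the level $l$ through $q$ of (\ref{defn_q}), smoothed at the much smaller scale $\tilde\eta=N^{-2/3-9\epsilon_0}$ (Lemma \ref{lem_deltaestimatetileeta}), not an $\mathbf q_{2k-1,l}$-type rescaled eigenvalue at scale $\eta$; this part is the routine one, handled in the appendix.
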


Near the edges, by (\ref{thm_rigidity_equ}) and (\ref{delocalequation}), with $1- N^{-D_1}$ probability, we have
\begin{equation} \label{boundprob}
|\lambda_{\alpha^{\prime}}-\gamma_{\alpha^{\prime}}|\leq N^{-2/3+\epsilon_1},\ \max_{\mu,s_2} \vert \zeta_{\mu}(s_2) \vert^2 \leq N^{-1+\epsilon_1}.
\end{equation}
Hence, throughout the proofs of this section, we always use the scale parameter
\begin{equation} \label{edgeeta}
\eta =N ^{-2/3-\epsilon_0}, \ \epsilon_0>\epsilon_1 \ \text{is a small constant}.
\end{equation}

\begin{proof}[Proof of Lemma \ref{lem_onedimensionalcase}]
In a first step, we express the singular vector entries as an integral of Green functions over a random interval, which is recorded as the following lemma.   

\begin{lem} \label{step1} Under the assumptions of Lemma \ref{lem_onedimensionalcase},  there exists a small constant $0<\delta<1$, such that
\begin{equation} \label{lem_step1_eq}
\lim_{N \rightarrow \infty} \max_{l \leq N_k^{\delta} } \max_{\mu, \nu} \left \vert \mathbb{E}^V \theta(N \zeta_{\alpha^{\prime}}(\mu)\zeta_{\alpha^{\prime}}(\nu))-\mathbb{E}^V \theta[\frac{N}{\pi}\int_{I} \tilde{G}_{\mu \nu}(z) \mathcal{X}(E) dE] \right\vert =0,
\end{equation}
where $I$ is defined as 
\begin{equation}\label{defn_i_1}
I:=[a_{2k-1}-N^{-\frac{2}{3}+\epsilon}, \ a_{2k-1}+N^{-\frac{2}{3}+\epsilon}], 
\end{equation}
when (\ref{defnalphaprime1}) holds, and when  (\ref{defnalphaprime2}) holds, it is denoted as
\begin{equation}\label{defn_i_2}
I:=[a_{2k}-N^{-\frac{2}{3}+\epsilon}, \ a_{2k}+N^{-\frac{2}{3}+\epsilon}],
\end{equation}
with  $\epsilon $ satisfies that, for $C_1$ defined in (\ref{defn_theta1d})
\begin{equation} \label{defn_epsion}
2(C_1+1)(\delta+\epsilon_1)<\epsilon<c \epsilon_0, \ c>0 \text{ is a constant much smaller than} \ 1.
\end{equation} 
And $\mathcal{X}(E)$ is defined as
\begin{equation}\label{defn_x(e)}
\mathcal{X}(E):=\mathbf{1}( \lambda_{\alpha^{\prime}+1} < E^{-} \leq \lambda_{\alpha^{\prime}} ),
\end{equation}
where $E^{\pm}:=E \pm N^{\epsilon}\eta$.  The conclusion holds true if we replace $X_V$ with $X_G.$ 
\end{lem}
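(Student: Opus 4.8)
The plan is to realize the singular vector quadratic form $N\zeta_{\alpha'}(\mu)\zeta_{\alpha'}(\nu)$ as a contour/line integral of the Green function and to show that the contribution of all but the $\alpha'$-th term is negligible. Starting from the spectral identity (\ref{keykeykeyindentity}), I would write
\begin{equation*}
\frac{N}{\pi}\int_{I} \tilde G_{\mu\nu}(E+i\eta)\,\mathcal X(E)\,dE
= \frac{N}{\pi}\sum_{\beta=1}^{M\wedge N}\zeta_\beta(\mu)\zeta_\beta(\nu)\int_{I}\frac{\eta\,\mathcal X(E)}{(E-\lambda_\beta)^2+\eta^2}\,dE,
\end{equation*}
and observe that the Poisson kernel $\frac{1}{\pi}\int_{\mathbb R}\frac{\eta}{(E-\lambda)^2+\eta^2}dE=1$, so that for the single eigenvalue $\lambda_{\alpha'}$ that lies in the random sub-interval $\{E^- : E\in I,\ \mathcal X(E)=1\}$ the weight is $\approx 1$, while for every other $\beta$ the weight is small. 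Concretely, on the event (\ref{boundprob}) the eigenvalue $\lambda_{\alpha'}$ is within $N^{-2/3+\epsilon_1}$ of $\gamma_{\alpha'}=a_{2k-1}+O(N^{-2/3}l^{1/3}\cdot)$, hence $\gamma_{\alpha'}$ and $\lambda_{\alpha'}$ lie well inside $I$ once $l\le N_k^\delta$ and $\epsilon>\delta+\epsilon_1$; the cutoff $\mathcal X(E)=\mathbf 1(\lambda_{\alpha'+1}<E^-\le\lambda_{\alpha'})$ picks out precisely the window in which the running endpoint $E^-=E-N^\epsilon\eta$ has just passed $\lambda_{\alpha'}$ but not yet $\lambda_{\alpha'+1}$.

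The key estimates are then: (a) for $\beta=\alpha'$, $\frac{1}{\pi}\int_I\frac{\eta\mathcal X(E)}{(E-\lambda_{\alpha'})^2+\eta^2}dE = 1 + O(N^{-\epsilon})$ (the endpoints of $I$ are at distance $N^{-2/3+\epsilon}\gg N^\epsilon\eta$ from $\lambda_{\alpha'}$, so the tails of the Poisson kernel outside $I$ contribute $O(\eta/N^{-2/3+\epsilon}) = O(N^{-\epsilon-\epsilon_0+2\epsilon_0})$ after absorbing the $N^\epsilon\eta$ shift; I would track this carefully using (\ref{defn_epsion})); and (b) for $\beta\neq\alpha'$, using rigidity (\ref{thm_rigidity_equ}) to control the eigenvalue spacings near the edge — gaps of order $N^{-2/3}(\beta\wedge(N_k+1-\beta))^{-1/3}$ near the $k$-th edge — one sums $\sum_{\beta\ne\alpha'}|\zeta_\beta(\mu)\zeta_\beta(\nu)|\cdot\frac{\eta}{(\lambda_{\alpha'}-\lambda_\beta)^2}$; by delocalization $|\zeta_\beta(\mu)\zeta_\beta(\nu)|\le N^{-1+\epsilon_1}$, and the sum over $\beta$ of $\eta/(\text{gap})^2$ converges to something of order $N^{2/3}\eta\cdot\text{poly} = N^{-\epsilon_0+\epsilon_1}\cdot N^{\text{small}}$ (the $\log$-type divergence from small gaps is tamed by the cutoff $\mathcal X$, which restricts $E$ to an $O(N^\epsilon\eta)$-window and forces only eigenvalues within that window to contribute the dangerous $1/\eta$ weight, and there are $O(N^\epsilon)$ of them by rigidity). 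Multiplying by $N$ gives $N\sum_{\beta\ne\alpha'}(\cdots)=O(N^{\epsilon_1+\epsilon-\epsilon_0})=o(1)$ by (\ref{defn_epsion}). Thus $\frac{N}{\pi}\int_I\tilde G_{\mu\nu}\mathcal X\,dE = N\zeta_{\alpha'}(\mu)\zeta_{\alpha'}(\nu)(1+o(1)) + o(1)$ on a $1-N^{-D_1}$ probability event.

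Finally I would transfer this from a high-probability pointwise statement to the statement about $\mathbb E^V\theta$. On the bad event of probability $\le N^{-D_1}$ one uses the polynomial bound (\ref{defn_theta1d}) together with the trivial a priori bound $|N\zeta_{\alpha'}(\mu)\zeta_{\alpha'}(\nu)|\le N$ (and a similar crude bound on the integral of $\tilde G_{\mu\nu}$, which is bounded by $N\cdot\|G\|\cdot|I|\le N^{C}$) to see that the contribution of the bad event to both expectations is $\le N^{C}\cdot N^{-D_1}=o(1)$ for $D_1$ chosen large. On the good event, the difference inside $\theta$ is $o(1)$ uniformly, and by the mean value theorem together with the polynomial derivative bound (\ref{defn_theta1d}) and the polynomial a priori bounds on the arguments, $|\theta(\text{arg}_1)-\theta(\text{arg}_2)| \le C(1+N^C)^C\cdot o(1)$; choosing $\epsilon,\delta$ small enough (this is where the precise inequality $2(C_1+1)(\delta+\epsilon_1)<\epsilon<c\epsilon_0$ is used — it guarantees the error is $o(N^{-2(C_1+1)\delta})$, beating the polynomial growth $(1+N^{\delta})^{C_1}$ of $\theta$ and its derivative) makes this $o(1)$. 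Taking expectations and the supremum over $l\le N_k^\delta$ and over $\mu,\nu$ yields (\ref{lem_step1_eq}), with the same argument verbatim for $X_G$.

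I expect the main obstacle to be the careful bookkeeping in step (b): one must show the off-diagonal sum $N\sum_{\beta\ne\alpha'}\frac{\eta\,|\zeta_\beta(\mu)\zeta_\beta(\nu)|}{(E-\lambda_\beta)^2+\eta^2}$ is $o(1)$ \emph{uniformly in $E\in I$ with $\mathcal X(E)=1$}, which requires combining the rigidity bound (\ref{thm_rigidity_equ}) (with its $\beta$-dependent precision near the edge) with delocalization in a way that handles both the near-diagonal terms (few in number, controlled by the $N^\epsilon\eta$-window from $\mathcal X$) and the far terms (whose $1/(\text{gap})^2$ weights must be summed against the square-root edge profile (\ref{squarerootequation})). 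Getting the exponents to close — i.e. verifying that the resulting power of $N$ is negative under (\ref{defn_epsion}) and the constraint $l\le N_k^\delta$ — is the crux, and is the reason the restriction into the bulk here ($l\le N_k^\delta$) is polynomially better than the $l\le(\log N)^{C\log\log N}$ of \cite{KY}.
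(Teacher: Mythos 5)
Your overall skeleton (Poisson-kernel representation of $N\zeta_{\alpha'}(\mu)\zeta_{\alpha'}(\nu)$ via (\ref{keykeykeyindentity}), localization to the random window encoded by $\mathcal{X}$, control of the $\beta\neq\alpha'$ terms, then transfer to $\mathbb{E}^V\theta$ using the polynomial bounds on $\theta$ and a bad-event estimate) is the same as the paper's. The genuine gap is in your step (b), which is the crux of the lemma: the claim that $N\sum_{\beta\neq\alpha'}\eta\,|\zeta_\beta(\mu)\zeta_\beta(\nu)|/((E-\lambda_\beta)^2+\eta^2)=o(1)$ \emph{uniformly} in $E\in I$ with $\mathcal{X}(E)=1$ is not provable from rigidity and delocalization. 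Rigidity (\ref{thm_rigidity_equ}) gives no lower bound on gaps: its precision $l^{-1/3}N^{-2/3+\epsilon_1}$ exceeds the typical spacing $l^{-1/3}N^{-2/3}$, so it neither separates eigenvalues $\lambda_\beta$ with $l(\beta)<l$ (which satisfy $\lambda_\beta\geq\lambda_{\alpha'}$) from $E$, nor counts eigenvalues at the scale $N^{\epsilon}\eta=N^{-2/3+\epsilon-\epsilon_0}$, which lies below the rigidity resolution because $\epsilon<\epsilon_0$. The cutoff $\mathcal{X}(E)$ only forces $E^-\leq\lambda_{\alpha'}$; it does not prevent an eigenvalue above $\lambda_{\alpha'}$ from lying within $O(\eta)$ of $E$, and in that case a single term of your sum is of size $N\eta\cdot N^{-1+\epsilon_1}/\eta^{2}=N^{2/3+\epsilon_0+\epsilon_1}$. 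So the uniform-in-$E$ $o(1)$ bound fails, and with it your one-line conclusion that $\frac{N}{\pi}\int_I\tilde{G}_{\mu\nu}\mathcal{X}\,dE=N\zeta_{\alpha'}(\mu)\zeta_{\alpha'}(\nu)(1+o(1))+o(1)$ on a high-probability event.

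The paper's proof is organized around exactly this obstruction and argues differently at this point: it proves only the integrated crude bound (\ref{greenfunctionbound2}), namely that the off-diagonal contribution is $\leq N^{\delta+2\epsilon_1}$ (polynomially large, not $o(1)$), pays the resulting factor $N^{C_1(\delta+2\epsilon_1)}$ from $\theta'$ in the mean value theorem step (\ref{MVT}), and then bounds the \emph{expectation} of the cutoff-weighted, integrated off-diagonal sum by splitting the index set: $\beta\in\mathcal{A}$ in (\ref{outbulk1}); $l\leq l(\beta)\leq N_k^{\delta_1}$, where the cutoff really is used, via $\lambda_\beta\leq\lambda_{\alpha'+1}$ and the tail estimate (\ref{anestimationequ}), to get (\ref{truncarioneq2}); and $l(\beta)<l$, treated by partitioning $I=I_1\cup I_2$ as in (\ref{defn_i1}) with the auxiliary exponent $\epsilon'$ of (\ref{key_ris}) and the estimates (\ref{partitioni2}) and (\ref{truncationseq3}). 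This is where (\ref{defn_epsion}) and the additional constraint $\epsilon>2((C_1+1)\epsilon_1+\delta_1+C_1\delta)$ are actually consumed --- not, as in your final paragraph, in beating the growth of $\theta$ on the exceptional event, which is the routine part. A related smaller omission is in your step (a): when the gap $\lambda_{\alpha'}-\lambda_{\alpha'+1}\lesssim N^{\epsilon}\eta$ (an event that cannot be ruled out without level repulsion), the $\mathcal{X}$-window cuts into the Poisson mass of the $\alpha'$-term, so the diagonal weight is not $1+O(N^{-\epsilon})$; the paper's choice $a=\min\{\lambda_{\alpha'}-N^{\epsilon}\eta,\ \lambda_{\alpha'+1}+N^{\epsilon}\eta\}$ in (\ref{defn_ab}) and the separate handling of the overlap term before (\ref{greenfunction4}) are designed precisely for this case. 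Without these two pieces of bookkeeping your argument does not close.
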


\begin{proof}
We first observe that 
\begin{equation*}
\zeta_{\alpha^{\prime}}(\mu)\zeta_{\alpha^{\prime}}(\nu)=\frac{\eta}{\pi} \int_{\mathbb{R}} \frac{ \zeta_{\alpha^{\prime}}(\mu)\zeta_{\alpha^{\prime}}(\nu)}{(E-\lambda_{\alpha^{\prime}})^2+\eta^2} dE.
\end{equation*}
Choose $a, \ b$ such that  
\begin{equation} \label{defn_ab}
a:=\min \{ \lambda_{\alpha^{\prime}}-N^{\epsilon}\eta, \ \lambda_{\alpha^{\prime}+1 }+N^{\epsilon}\eta \}, \  b:= \lambda_{\alpha^{\prime}}+N^{\epsilon}\eta. 
\end{equation} 
We also observe the elementary inequality (see the equation above (6.10) of \cite{EYY}), for some constant $C>0,$
\begin{equation} \label{anestimationequ}
\int_{x}^{\infty} \frac{\eta}{\pi (y^2+\eta^2)}dy \leq \frac{C \eta}{x+\eta}, \ x>0.
\end{equation}
\noindent By (\ref{boundprob}), (\ref{defn_ab}) and (\ref{anestimationequ}), with $1-N^{-D_1}$ probability, we have
\begin{equation} \label{greenfunctioneq1}
\zeta_{\alpha^{\prime}}(\mu)\zeta_{\alpha^{\prime}}(\nu)=\frac{\eta}{\pi} \int_a^b \frac{\zeta_{\alpha^{\prime}}(\mu)\zeta_{\alpha^{\prime}}(\nu)}{(E-\lambda_\alpha^{\prime})^2+\eta^2} dE+ O(N^{-1-\epsilon+\epsilon_1}).
\end{equation}
By  (\ref{defn_theta1d}),  (\ref{boundprob}), (\ref{defn_epsion}), (\ref{greenfunctioneq1}) and mean value theorem  , we have 
\begin{equation} \label{greenfunctioneq2}
\mathbb{E}^V \theta(N \zeta_{\alpha^{\prime}}(\mu)\zeta_{\alpha^{\prime}}(\nu) )= \mathbb{E}^V \theta(\frac{ N \eta}{\pi} \int_a^b \frac{\zeta_{\alpha^{\prime}}(\mu)\zeta_{\alpha^{\prime}}(\nu)}{(E-\lambda_{\alpha^{\prime}})^2+\eta^2} dE )+o(1).
\end{equation}
Denote $\lambda_{t}^{\pm}:=\lambda_t \pm N^{\epsilon}\eta, \ t=\alpha^{\prime}, \ \alpha^{\prime}+1$, and by (\ref{defn_ab}), we have 
\begin{equation*}
\int_a^b dE= \int_{ \lambda_{\alpha^{\prime}+1}^{+}}^{\lambda_{\alpha^{\prime}}^{+}} dE + \mathbf{1}(\lambda_{\alpha^{\prime}+1}^{+} > \lambda_{\alpha^{\prime}}^{-})\int_{\lambda_{\alpha^{\prime}}^{-}}^{\lambda_{\alpha^{\prime}+1}^{+}}dE.
\end{equation*}
\noindent By  (\ref{defn_theta1d}), (\ref{boundprob}), (\ref{greenfunctioneq2}) and mean value theorem , we have
\begin{equation}\label{greenfunction3}
\mathbb{E}^V \theta(N \zeta_{\alpha^{\prime}}(\mu)\zeta_{\alpha^{\prime}}(\nu))= \mathbb{E}^V \theta(\frac{ N \eta}{\pi} \int_{\lambda^{+}_{\alpha^{\prime}+1}}^{\lambda_{\alpha^{\prime}}^{+}} \frac{\zeta_{\alpha^{\prime}}(\mu)\zeta_{\alpha^{\prime}}(\nu)}{(E-\lambda_{\alpha^{\prime}})^2+\eta^2} dE)+o(1),
\end{equation}
 where we use (\ref{thm_rigidity_equ}) and (\ref{defn_epsion}). Next we can without loss of generality, consider the case when (\ref{defnalphaprime1}) holds true.
By (\ref{boundprob}) and (\ref{defn_epsion}), we observe that with $1-N^{-D_1}$ probability, we have $\lambda_{\alpha^{\prime}}^{+} \leq a_{2k-1}+N^{-2/3+\epsilon}$ and  $\lambda_{\alpha^{\prime}+1}
^{+} \geq a_{2k-1}-N^{-2/3+\epsilon}.$  By (\ref{thm_rigidity_equ})  and the choice of $I$ in (\ref{defn_i_1}), we have 
\begin{equation} \label{greenfunction4}
\mathbb{E}^V \theta(N \zeta_{\alpha^{\prime}}(\mu)\zeta_{\alpha^{\prime}}(\nu))= \mathbb{E}^V \theta(\frac{ N \eta}{\pi} \int_I  \frac{\zeta_{\alpha^{\prime}}(\mu)\zeta_{\alpha^{\prime}}(\nu)}{(E-\lambda_{\alpha^{\prime}})^2+\eta^2} \mathcal{X}(E) dE )+o(1).
\end{equation}
Recall (\ref{keykeykeyindentity}), we can split the summation as
\begin{equation}\label{main_representation1}
\frac{1}{\eta } \tilde{G}_{\mu \nu}(z)=\sum_{\beta \neq \alpha^{\prime}}  \frac{\zeta_{\beta}(\mu)\zeta_\beta(\nu)}{(E-\lambda_{\beta})^2+\eta^2}+ \frac{\zeta_{\alpha^{\prime}}(\mu)\zeta_{\alpha^{\prime}}(\nu)}{(E-\lambda_{\alpha^{\prime}})^2+\eta^2}. 
\end{equation}
Denote $\mathcal{A}:=\{\beta \neq \alpha^{\prime}: \lambda_{\beta} \ \text{is not in the $k$-th bulk component} \}.$
By (\ref{boundprob}), with $1-N^{-D_1}$ probability, we have
\begin{equation}\label{main_representation2}
\left \vert \sum_{\beta \neq \alpha^{\prime} } \frac{N \eta}{\pi} \int_{I} \frac{\zeta_{\beta}(\mu)\zeta_{\beta}(\nu)}{(E-\lambda_{\beta})^2+\eta^2} dE \right \vert \leq  \frac{N^{\epsilon_1}}{\pi} \left( \sum_{\beta \in \mathcal{A} } \int_{I} \frac{\eta}{\eta^2+(E-\lambda_{\beta})^2} dE+\sum_{\beta \in \mathcal{A}^c } \int_{I} \frac{\eta}{\eta^2+(E-\lambda_{\beta})^2} dE \right).
\end{equation}
By Assumption \ref{defnregularity}, with $1-N^{D_1}$ probability,  we have
\begin{equation} \label{outbulk}
\frac{N^{\epsilon_1}}{\pi} \sum_{\beta \in \mathcal{A}} \int_{I} \frac{\eta}{\eta^2+(E-\lambda_{\beta})^2} dE \leq N^{\epsilon_1} \sum_{\beta \in \mathcal{A} } N^{-4/3-\epsilon_0+\epsilon}.
\end{equation}
Denote 
\begin{equation}\label{defn_l1}
l(\beta):=\beta-\sum_{t<k} N_t.
\end{equation}
By (\ref{boundprob}), with $1- N^{-D_1}$ probability, for some small constant $0<\delta<1,$ we have
\begin{equation}\label{main_representation22}
\frac{N^{\epsilon_1}}{\pi} \sum_{\beta \in \mathcal{A}^c }  \int_{I} \frac{\eta}{(E-\lambda_{\beta})^2+\eta^2} dE  \leq N^{\epsilon_1+\delta}+ \frac{1}{\pi} \sum_{\beta \in \mathcal{A}^c; \ l(\beta) \geq N_k^{\delta}} \int_{I} \frac{N^{\epsilon_1}\eta}{\eta^2+(E-\lambda_{\beta})^2} dE.
\end{equation}
By Assumption \ref{defnregularity}, (\ref{squarerootequation}) and (\ref{thm_rigidity_equ}), it is easy to check that (see (3.12) of \cite{KY})
\begin{equation} \label{Ebound}
(E-\lambda_{\beta})^2 \geq c(\frac{l(\beta)}{N})^{4/3}, \ c>0 \ \text{is some constant}.
\end{equation}
\noindent By  (\ref{Ebound}),  with  $1-N^{-D_1}$ probability, we have
\begin{align*}
\frac{1}{\pi}\sum_{\beta \in \mathcal{A}^c; \ l(\beta) \geq N_k^{\delta}} \int_{I} \frac{ N^{\epsilon_1}  \eta}{\eta^2+(E-\lambda_{\beta})^2} dE  \leq N^{\epsilon_1-\epsilon_0+\epsilon} \int_{N^{\delta}-1}^N \frac{1}{x^{4/3}}dx \leq N^{-\delta/3+\epsilon_1-\epsilon_0+\epsilon}.
\end{align*}
\noindent Recall (\ref{defn_epsion}), we can restrict $\epsilon_1-\epsilon_0+\epsilon<0$, with   $1-N^{-D_1}$ probability, this yields 
\begin{equation} \label{greenfunctionbound1}
\sum_{\beta \in \mathcal{A}^c; \ l(\beta) \geq N_k^{\delta}} \int_{I} \frac{ N^{\epsilon_1}  \eta}{\eta^2+(E-\lambda_{\beta})^2} dE \leq N^{-\delta/3}. 
\end{equation}
By (\ref{main_representation2}), (\ref{outbulk}), (\ref{main_representation22}) and (\ref{greenfunctionbound1}), with $1-N^{-D_1}$ probability,  we have
\begin{equation} \label{greenfunctionbound2}
\left \vert \sum_{\beta \neq \alpha^{\prime} } \frac{N \eta}{\pi} \int_{I} \frac{\zeta_{\beta}(\mu)\zeta_{\beta}(\nu)}{(E-\lambda_{\beta})^2+\eta^2} dE \right \vert \leq N^{\delta+2\epsilon_1}.
\end{equation}
\noindent By (\ref{defn_theta1d}), (\ref{boundprob}), (\ref{main_representation1}), (\ref{greenfunctionbound2}) and mean value theorem, we have 
\begin{align} \label{MVT}
\left| \mathbb{E}^V \theta(\frac{N \eta}{\pi}\int_{I} \frac{\zeta_{\alpha^{\prime}}(\mu)\zeta_{\alpha^{\prime}}(\nu)}{(E-\lambda_{\alpha^{\prime}})^2+\eta^2} \mathcal{X}(E) dE)- \mathbb{E}^V \theta(\frac{N }{\pi } \int_{I} \tilde{G}_{\mu \nu}(E+i\eta)\mathcal{X}(E)dE ) \right | \nonumber \\ \leq N^{C_1(\delta+2\epsilon_1)} \mathbb{E}^V \sum_{\beta \neq \alpha^{\prime}} \frac{N \eta}{\pi} \int_{I} \frac{\vert \zeta_{\beta}(\mu)\zeta_{\beta}(\nu) \vert}{(E-\lambda_{\beta})^2+\eta^2} \mathcal{X}(E) dE,
\end{align}
where $C_1$ is defined in (\ref{defn_theta1d}).  To finish the proof, it suffices to estimate the right-hand side of (\ref{MVT}). Similar to (\ref{outbulk}), we have
\begin{equation}\label{outbulk1}
 \sum_{\beta \in \mathcal{A} } \int_{I} \frac{\eta}{\eta^2+(E-\lambda_{\beta})^2} dE \leq N^{-1/3-\epsilon_0+\epsilon}. 
\end{equation}

\noindent Choose a small constant $0<\delta_1<1$, repeat the estimation of  (\ref{greenfunctionbound1}), we have
\begin{equation}\label{truncationeq1}
\sum_{\beta \in \mathcal{A}^c ; \ l(\beta) \geq N_k^{\delta_1}}  \int_{I} \frac{\eta}{\eta^2+(E-\lambda_{\beta})^2} dE \leq N^{-\delta_1/3+\epsilon-\epsilon_0}.
\end{equation}
Recall (\ref{defnalphaprime1}) and restrict $\epsilon>2((C_1+1)\epsilon_1+\delta_1+C_1\delta)$, by (\ref{boundprob}) and (\ref{anestimationequ}), we have
\begin{align} \label{truncarioneq2}
\sum_{\beta \in \mathcal{A}^c ;\  l \leq
 l(\beta) \leq N_k^{\delta_1}} \frac{N \eta}{\pi} \mathbb{E}^V \int_{I}  \frac{\vert \zeta_{\beta}(\mu)\zeta_{\beta}(\nu) \vert}{(E-\lambda_{\beta})^2+\eta^2} \mathcal{X}(E) dE  & \leq \mathbb{E}^V \int_{\lambda_{\alpha^{\prime}+1}+N^{\epsilon}\eta}^{\infty}\frac{ N^{\delta_1+\epsilon_1} \eta}{(E-\lambda_{\alpha^{\prime}+1})^2+\eta^2}dE \nonumber \\ 
&   \leq N^{-\epsilon+\epsilon_1+\delta_1},
\end{align}
where we use the fact that $\beta \in \mathcal{A}^c $ and $\  l < l(\beta) \leq N_k^{\delta_1}$  implies $\lambda_{\beta} \leq \lambda_{\alpha^{\prime}+1}$. It remains to estimate the summation of the terms when $\beta \in \mathcal{A}^c$ and $l(\beta) < l.$  For a given constant $\epsilon^{\prime}$ satisfies
\begin{equation}\label{key_ris}
 \frac{1}{2}(\epsilon_0+3\epsilon+2(C_1+1)\epsilon_1+(C_1+1)\delta)<\epsilon^{\prime}<\epsilon_0,
 \end{equation}
 we partition $I=I_1 \cup I_2 $ with $I_1 \cap I_2=\emptyset$ by denoting
\begin{equation} \label{defn_i1}
I_1: = \{ E \in I: \exists \beta,\beta \in \mathcal{A}^c, \  l(\beta) < l, \ \vert E-\lambda_{\beta} \vert \leq N^{\epsilon^{\prime}}\eta  \}.
\end{equation}
By (\ref{boundprob}) and (\ref{defn_i1}), we have 
\begin{equation}\label{partitioni2}
\sum_{\beta \in \mathcal{A}^c;\  l(\beta) < l} \frac{N \eta}{\pi} \mathbb{E}^V \int_{I_2}  \frac{\vert \zeta_{\beta}(\mu)\zeta_{\beta}(\nu) \vert}{(E-\lambda_{\beta})^2+\eta^2} \mathcal{X}(E) dE \leq N^{-2 \epsilon^{\prime}+\epsilon_0+\epsilon+\epsilon_1+\delta}.
\end{equation}
It is easy to check that on $I_1$ when $\lambda_{\alpha^{\prime}+1}\leq \lambda_{\alpha^{\prime}}<\lambda_{\beta}$, we have (see (3.15) of \cite{KY}) 
\begin{equation} \label{eigenvalueestimate}
\frac{1}{(E-\lambda_{\beta})^2+\eta^2} \mathbf{1}(E^{-} \leq \lambda_{\alpha^{\prime}} ) \leq \frac{N^{2\epsilon}}{(\lambda_{\alpha^{\prime}+1}-\lambda_{\alpha^{\prime}})^2+\eta^2}. 
\end{equation}
By (\ref{boundprob}) and (\ref{eigenvalueestimate}), we have
\begin{align} \label{truncationseq3}
\sum_{\beta \in \mathcal{A}^c;   l(\beta) \leq l} \frac{N \eta}{\pi} \mathbb{E}^V & \int_{I_1}  \frac{\vert \zeta_{\beta}(\mu)\zeta_{\beta}(\nu) \vert}{(E-\lambda_{\beta})^2+\eta^2} \mathcal{X}(E) dE  \leq \mathbb{E}^V \int_{I_1} \frac{N^{\delta+\epsilon_1+2\epsilon-2/3}\eta}{(\lambda_{\alpha^{\prime}+1}-\lambda_{\alpha^{\prime}})^2+\eta^2} dE \nonumber \\
& \leq N^{\delta+\epsilon_1+3\epsilon-D_1+2/3+\epsilon_0}+ N^{-2\epsilon^{\prime}+\epsilon_0+\epsilon_1+\delta+3\epsilon}.
\end{align}
By (\ref{outbulk1}), (\ref{truncationeq1}), (\ref{truncarioneq2}), (\ref{key_ris}) and (\ref{truncationseq3}), we conclude the proof of (\ref{MVT}). It is clear that our proof still applies when we replace $X_V$ with $X_G.$ 
\end{proof}

In a second step, we will write the sharp indicator function of (\ref{defn_x(e)}) as some smooth function $q$ of $\tilde{G}_{\mu \nu}$. To be consistent with the proof of Lemma \ref{step1}, we consider the bulk edge $a_{2k-1}.$ Denote 
\begin{equation}\label{thetaeta}
\vartheta_{\eta}(x):=\frac{\eta}{\pi(x^2+\eta^2)}=\frac{1}{\pi}\operatorname{Im} \frac{1}{x-i\eta}.
\end{equation}
We define a smooth cutoff function $q \equiv q_{\alpha^{\prime}}: \mathbb{R} \rightarrow \mathbb{R}_{+}$ satisfying
\begin{equation} \label{defn_q}
q(x)=1, \ \text{if}  \ \  |x-l| \leq \frac{1}{3}; \ \  \ q(x)=0, \ \text{if} \ \   |x-l\vert\geq \frac{2}{3},
\end{equation}
where $l$ is defined in (\ref{defnalphaprime1}). We also denote $Q_1=Y^*Y.$
\begin{lem} \label{truncation}
For $\epsilon$ defined in (\ref{defn_epsion}) , denote
\begin{equation}  \label{keywindowlength}
\mathcal{X}_{E}(x):=\mathbf{1}(E^{-} \leq x \leq E_U),
\end{equation}
where $E_U:=a_{2k-1}+2N^{-2/3+\epsilon}$.  Denote $\tilde{\eta}:=N^{-2/3-9\epsilon_0},$ where $\epsilon_0$ is defined in (\ref{edgeeta}),  we have
\begin{equation} \label{step2equ}
\lim_{N \rightarrow \infty} \max_{l \leq N_k^{\delta}} \max_{\mu,\nu} \left |  \mathbb{E}^V \theta(N \zeta_{\alpha^{\prime}}(\mu)\zeta_{\alpha^{\prime}}(\nu))-\mathbb{E}^V \theta(\frac{N}{\pi } \int_I \tilde{G}_{\mu \nu}(z)q[\operatorname{Tr}(\mathcal{X}_E*\vartheta_{\tilde{\eta}})(Q_1)]dE) \right |=0,
\end{equation} 
where $I$ is defined in (\ref{defn_i_1}) and $*$ is the convolution operator.
\end{lem}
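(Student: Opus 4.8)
The plan is to pass from the sharp characteristic function $\mathcal{X}(E)=\mathbf{1}(\lambda_{\alpha'+1}<E^-\le\lambda_{\alpha'})$ appearing in Lemma \ref{step1} to the smooth, Green-function-measurable weight $q[\operatorname{Tr}(\mathcal{X}_E*\vartheta_{\tilde\eta})(Q_1)]$ in two substeps. First I would observe that $\operatorname{Tr}(\mathcal{X}_E*\vartheta_{\tilde\eta})(Q_1)$ is a smoothed-out count of the eigenvalues of $Q_1=Y^*Y$ lying in the window $[E^-,E_U]$: indeed $\operatorname{Tr}(\mathcal{X}_E*\vartheta_{\tilde\eta})(Q_1)=\sum_\beta (\mathcal{X}_E*\vartheta_{\tilde\eta})(\lambda_\beta)$, and since $\vartheta_{\tilde\eta}$ is an approximate identity at scale $\tilde\eta=N^{-2/3-9\epsilon_0}$, this quantity is within $O(N^{-\epsilon_0})$ (with overwhelming probability, using the rigidity estimate \eqref{thm_rigidity_equ} and the fact that at most $O(N^{\epsilon})$ eigenvalues sit in an $O(N^{-2/3+\epsilon})$-window near the edge $a_{2k-1}$) of the exact number $\#\{\beta:\lambda_\beta\in[E^-,E_U]\}$. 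The point of the right endpoint $E_U=a_{2k-1}+2N^{-2/3+\epsilon}$ is that, with $1-N^{-D_1}$ probability, $E_U$ lies above $\lambda_{\alpha'}$ for every $E\in I$ and every $l\le N_k^\delta$, while still being separated from the next bulk component; hence the count in $[E^-,E_U]$ equals $\#\{\beta\text{ in the }k\text{-th bulk}:\lambda_\beta\ge E^-\}=l(\beta)$-type index, which agrees with $l$ precisely on the event $\{E^-\le\lambda_{\alpha'}\}$ and equals $l-1$ on $\{E^-> \lambda_{\alpha'}\}$ (on the relevant part of $I$, where $E^- > \lambda_{\alpha'+1}$ is automatic by rigidity). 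Consequently, on the good event, $q[\operatorname{Tr}(\mathcal{X}_E*\vartheta_{\tilde\eta})(Q_1)]=\mathbf{1}(E^-\le\lambda_{\alpha'})=\mathcal{X}(E)$ except on a set of $E\in I$ of measure $O(N^{\epsilon}\eta)$ near the two "jump" locations $E^-=\lambda_{\alpha'}$ and $E^-=\lambda_{\alpha'+1}$, where $q$ smoothly interpolates; the cutoff levels $1/3,2/3$ in \eqref{defn_q} are exactly chosen so that the smoothing error in the count (of size $O(N^{-\epsilon_0})\ll 1/3$) never pushes $q$ off of $\{0,1\}$ away from those transition regions.

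Second, I would control the contribution of the transition regions to $\mathbb{E}^V\theta$. On the good event, the difference
\[
\Big|\tfrac{N}{\pi}\int_I\tilde G_{\mu\nu}(z)\,\mathcal{X}(E)\,dE-\tfrac{N}{\pi}\int_I\tilde G_{\mu\nu}(z)\,q[\operatorname{Tr}(\mathcal{X}_E*\vartheta_{\tilde\eta})(Q_1)]\,dE\Big|
\]
is bounded by $\tfrac{N}{\pi}\int_{I_{\mathrm{trans}}}|\tilde G_{\mu\nu}(z)|\,dE$, where $I_{\mathrm{trans}}$ is the union of two intervals of length $O(N^{\epsilon}\eta)$ around $E^-=\lambda_{\alpha'}$ and $E^-=\lambda_{\alpha'+1}$. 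For the interval near $\lambda_{\alpha'}$ one uses $|\tilde G_{\mu\nu}(E+i\eta)|\le \eta^{-1}\sum_\beta \eta^2/((E-\lambda_\beta)^2+\eta^2)\cdot\max|\zeta_\beta|^2$... more precisely, by \eqref{keykeykeyindentity} and delocalization \eqref{boundprob}, $|\tilde G_{\mu\nu}(z)|\le N^{-1+\epsilon_1}\sum_\beta \eta/((E-\lambda_\beta)^2+\eta^2)$, and the sum is split exactly as in the proof of Lemma \ref{step1} (the $\beta=\alpha'$, $\beta=\alpha'+1$, in-bulk-far, and out-of-bulk pieces), giving $\tfrac{N}{\pi}\int_{I_{\mathrm{trans}}}|\tilde G_{\mu\nu}|\,dE = o(1)$ after taking into account the extra smallness $|I_{\mathrm{trans}}|=O(N^{\epsilon}\eta)=O(N^{-2/3-\epsilon_0+\epsilon})$. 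Combined with the mean value theorem applied to $\theta$ (using the polynomial-growth bound \eqref{defn_theta1d} and the a priori bound $N|\zeta_{\alpha'}(\mu)\zeta_{\alpha'}(\nu)|\le N^{\epsilon_1}$, together with $\tfrac{N}{\pi}\int_I|\tilde G_{\mu\nu}|\mathcal{X}\,dE = O(N^{\epsilon_1})$ from Lemma \ref{step1}), this upgrades the pointwise estimate to the $\mathbb{E}^V\theta$-level estimate; finally the complement of the good event contributes $O(N^{-D_1}\cdot N^{C})=o(1)$ since both arguments of $\theta$ are deterministically bounded by $N^{C}$. The same argument applies verbatim to $X_G$, and combining with \eqref{lem_step1_eq} gives \eqref{step2equ}.

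The main obstacle is the bookkeeping in the first substep: showing that the smoothed trace $\operatorname{Tr}(\mathcal{X}_E*\vartheta_{\tilde\eta})(Q_1)$ faithfully reproduces the sharp eigenvalue count $\mathbf{1}(E^-\le\lambda_{\alpha'})$ after composing with $q$, \emph{uniformly} over $E\in I$ and over $l\le N_k^\delta$. This requires carefully combining (i) the edge rigidity \eqref{thm_rigidity_equ}, which localizes $\lambda_{\alpha'}$ to within $N^{-2/3+\epsilon_1}$ of $\gamma_{\alpha'}$ and controls the local eigenvalue spacing near the edge, (ii) the choice $\tilde\eta=N^{-2/3-9\epsilon_0}\ll N^{-2/3}$ so that the convolution does not blur eigenvalues across the threshold $E^-$, and (iii) the separation of the $k$-th bulk component from its neighbours (Assumption \ref{defnregularity}) so that $E_U$ cleanly caps the count at the top of the bulk. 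Once the deterministic/probabilistic identification $q[\operatorname{Tr}(\mathcal{X}_E*\vartheta_{\tilde\eta})(Q_1)]=\mathcal{X}(E)$ off a short transition set is established, the remaining estimates are a routine repetition of the ones already carried out in the proof of Lemma \ref{step1}.
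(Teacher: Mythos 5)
Your overall strategy mirrors the paper's proof of Lemma \ref{truncation} — identify $q[\operatorname{Tr}(\mathcal{X}_E*\vartheta_{\tilde{\eta}})(Q_1)]$ with $\mathcal{X}(E)$ off a small exceptional set of $E$, bound the contribution of that set, and finish with the mean value theorem and the bad-event estimate — but there is a genuine gap exactly at the step you yourself flag as the main obstacle, namely the comparison of the sharp and smoothed eigenvalue counts. Your claim that $\operatorname{Tr}(\mathcal{X}_E*\vartheta_{\tilde{\eta}})(Q_1)$ is within $O(N^{-\epsilon_0})$ of the exact count, uniformly over $E\in I$, is false: whenever an eigenvalue lies within $O(\tilde{\eta})$ of the window boundary $E^-$, its smoothed contribution is roughly $1/2$ instead of $0$ or $1$, so the discrepancy is of order one; and even away from such $E$, the aggregate Cauchy-tail leakage over all $N$ eigenvalues is not controlled by ``approximate identity'' plus ``$O(N^{\epsilon})$ eigenvalues in the edge window'' — one needs the local law and the square-root behaviour of $\rho$ to sum the tails. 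This is precisely what the paper isolates as Lemma \ref{lem_deltaestimatetileeta}, which gives $|\operatorname{Tr}\mathcal{X}_E(Q_1)-\operatorname{Tr}(\mathcal{X}_E*\vartheta_{\tilde{\eta}})(Q_1)|\le C(N^{-2\epsilon_0}+\mathcal{N}(E^--t,E^-+t))$ with $t=N^{-2/3-3\epsilon_0}$, proved in the appendix via the estimate $\operatorname{Tr}f(Q_1)\le N^{-2\epsilon_0}$; nothing in your proposal substitutes for this input.

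The absence of that quantitative input then breaks your second substep. Since you cannot localize the disagreement set to $E^-$-neighbourhoods of width $O(t)$, you take transition windows of width $O(N^{\epsilon}\eta)=O(N^{-2/3-\epsilon_0+\epsilon})$, which is much larger than $t$ and than $\eta$. With windows that wide, the claimed bound $\frac{N}{\pi}\int_{I_{\mathrm{trans}}}|\tilde{G}_{\mu\nu}|\,dE=o(1)$ does not follow from delocalization alone: if the gap $\lambda_{\alpha^{\prime}}-\lambda_{\alpha^{\prime}+1}$ happens to be of size comparable to $N^{\epsilon}\eta$ — an event not excluded by the rigidity bound (\ref{thm_rigidity_equ}), and no level-repulsion estimate is available here — then the window attached to the jump $E^-=\lambda_{\alpha^{\prime}+1}$ contains $\lambda_{\alpha^{\prime}}$, and the single term $\beta=\alpha^{\prime}$ in $N^{-1+\epsilon_1}\sum_{\beta}\eta/((E-\lambda_{\beta})^2+\eta^2)$ already contributes $\Theta(N^{\epsilon_1})$ to your integral, because a window wider than $\eta$ around an eigenvalue captures an order-one fraction of its Cauchy mass. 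The paper avoids this entirely: it first uses the exact identity $\mathcal{X}(E)=q[\operatorname{Tr}\mathcal{X}_E(Q_1)]$ (the sharp count is an integer, so $q$ of it is exactly the indicator), and then the smoothness of $q$ together with Lemma \ref{lem_deltaestimatetileeta} confines the error to the narrow $t$-windows appearing in (\ref{truncation_estimate_eq}), where even the crude bound $\sup_{E\in I}|\tilde{G}_{\mu\nu}|\le N^{-1/3+3\epsilon_0/2}$ yields $CN^{1+\delta}\,t\,\sup_{E\in I}|\tilde{G}_{\mu\nu}|=o(1)$. So the missing idea is the sharp-versus-smoothed trace comparison at scale $t$; without it, or an equivalent, your transition-set estimate cannot be closed.
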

\begin{proof}
For any $E_1 <E_2$, denote the number of eigenvalues of $Q_1$ in $[E_1, E_2]$ by
\begin{equation} \label{defn_countingfunction}
\mathcal{N}(E_1, E_2):=\#\{j: E_1 \leq \lambda_j \leq E_2\}.
\end{equation}
Recall (\ref{defn_i_1}) and (\ref{defn_x(e)}), it is easy to check that with $1-N^{-D_1}$ probability, we have 
\begin{align} \label{part_estimate}
N \int_I \tilde{G}_{\mu \nu}(z)\mathcal{X}(E) dE=N \int_{I} \tilde{G}_{\mu \nu}(z)\mathbf{1}(\mathcal{N}(E^{-}, E_U)=l)d E 
=N \int_{I} \tilde{G}_{\mu \nu}(z)q[\operatorname{Tr}\mathcal{X}_{E}(Q_1)]dE, 
\end{align}
where for the second equality, we use (\ref{thm_rigidity_equ}) and Assumption \ref{defnregularity}. We use the following lemma to estimate (\ref{defn_countingfunction}) by its delta approximation smoothed on the scale $\tilde{\eta}.$  The proof is put in Appendix \ref{appendix_a}. 
\begin{lem} \label{lem_deltaestimatetileeta}For $t=N^{-2/3-3\epsilon_0},$ there exists some constant $C$, with $1-N^{-D_1}$ probability,  for any $E$ satisfying 
\begin{equation} \label{defn_edomain}
|E^{-}-a_{2k-1}| \leq \frac{3}{2}N^{-2/3+\epsilon},
\end{equation}
we have
\begin{equation}\label{lem_deltaestimate}
| \operatorname{Tr} \mathcal{X}_E(Q_1)-\operatorname{Tr}(\mathcal{X}_E*\vartheta_{\tilde{\eta}})(Q_1) | \leq C(N^{-2\epsilon_0}+\mathcal{N}(E^--t, \ E^-+t)).
\end{equation}
\end{lem}
\noindent By (A.7) of \cite{KY1}, for any $ z \in \mathbf{D}(\tau)$ defined in (\ref{DOMAIN1}), we have
\begin{equation} \label{squareroot}
 \operatorname{Im} m(z) \sim \begin{cases} 
      \eta/\sqrt{\kappa+\eta} & E \notin \operatorname{supp}(\rho), \\
    \sqrt{\kappa+\eta} & E \in \operatorname{supp}(\rho).
   \end{cases},
\end{equation} 
where $\kappa:=|E-a_{2k-1}|.$  When $\mu=\nu,$ with $1-N^{-D_1}$ probability, we have
\begin{equation*}
\sup_{E \in I} |\tilde{G}_{\mu \mu}(E+i \eta)|=\sup_{E \in I}|\operatorname{Im} G_{\mu \mu}(z)| \leq \sup_{E \in I}(\operatorname{Im}|G_{\mu \mu}(z)-m_{}(z)|+|\operatorname{Im}m_{}(z)|) \leq N^{-1/3+\epsilon_0+2\epsilon},
\end{equation*}
where we use (\ref{thm_anisotropic_eq12}) and (\ref{squareroot}). When $\mu \neq \nu,$ we use the following identity 
\begin{equation*}
 \tilde{G}_{\mu \nu}=\eta \sum_{k=M+1}^{M+N} G_{\mu k} \overline{G}_{\nu k}.
\end{equation*}
By (\ref{thm_anisotropic_eq12}) and (\ref{squareroot}), with $1-N^{-D_1}$ probability, we have $\sup_{E \in I} |\tilde{G}_{\mu \nu}(z)| \leq N^{-1/3+\epsilon_0+2\epsilon}. $ Therefore,  for $E \in I,$ with $1-N^{-D_1}$ probability,  we have
\begin{equation}\label{Ebounds}
\sup_{E \in I} |\tilde{G}_{\mu \nu}(E+i \eta)| \leq  N^{-1/3+3\epsilon_0/2}.
\end{equation}
\noindent Recall (\ref{defn_q}), by (\ref{part_estimate}),  (\ref{lem_deltaestimate}),  (\ref{Ebounds}) and the smoothness of $q$, with $1- N^{-D_1}$ probability, we have
\begin{align} \label{truncation_estimate_eq}
\left |  N \int_I \tilde{G}_{\mu \nu}(z)\mathcal{X}(E) dE- N \int_{I} \tilde{G}_{\mu \nu}(z)q[\operatorname{Tr}(\mathcal{X}_E*\vartheta_{\tilde{\eta}} (Q_1))] dE   \right |
& \leq   CN \sum_{ l(\beta) \leq N_k^{\delta}} \int_{I} \vert \tilde{G}_{\mu \nu}(z) \vert \mathbf{1}(\vert E^--\lambda_{\beta}\vert \leq t ) dE+N^{-\epsilon_0/4}  \nonumber \\
& \leq CN^{1+\delta} |t| \sup_{z \in I} \vert \tilde{G}_{\mu \nu}(z) \vert+ N^{-\epsilon_0/4}.
\end{align}
By (\ref{Ebounds})  and (\ref{truncation_estimate_eq}),  we have 
\begin{equation*}
\left |  N \int_I \tilde{G}_{\mu \nu}(z)\mathcal{X}(E) dE- N \int_{I} \tilde{G}_{\mu \nu}(z)q[\operatorname{Tr}(\mathcal{X}_E*\vartheta_{\tilde{\eta}}(Q_1))] dE   \right | \leq CN^{-\epsilon_0/2+\delta}+N^{-\epsilon_0/4}.
\end{equation*}
 Using a similar discussion to (\ref{main_representation2}), by (\ref{defn_theta1d}) and (\ref{defn_epsion}),  we finish the proof.
\end{proof}

In the final step, we use the Green function comparison argument to prove the following lemma, whose proof will be put in Section \ref{greenfunction_comp_edge}.
\begin{lem}\label{greenfunction_comp}
Under the assumptions of Lemma \ref{truncation}, we have
\begin{equation*}
\lim_{N \rightarrow \infty} \max_{\mu, \nu} (\mathbb{E}^V-\mathbb{E}^G) \theta \left(\frac{N}{\pi } \int_I \tilde{G}_{\mu \nu}(z)q[\operatorname{Tr}(\mathcal{X}_E*\vartheta_{\tilde{\eta}})(Q_1)]dE\right)=0.  
\end{equation*}
\end{lem}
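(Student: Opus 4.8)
The plan is to prove Lemma \ref{greenfunction_comp} by a Green function comparison argument in the spirit of \cite[Section 3]{KY}, but adapted to the linearizing block matrix \eqref{linearize_block}. I would set up a Lindeberg-type replacement scheme: order the independent entries $X_{i\mu}$ of $X$ (say $\gamma = 1, \dots, MN$) and interpolate between $X_V$ and $X_G$ by replacing one entry at a time, producing a sequence of matrices $X^{(0)} = X_V, X^{(1)}, \dots, X^{(MN)} = X_G$ that differ in a single entry at each step. Writing $\Theta(X) := \theta\big(\tfrac{N}{\pi}\int_I \tilde G_{\mu\nu}(z)\, q[\operatorname{Tr}(\mathcal X_E * \vartheta_{\tilde\eta})(Q_1)]\,dE\big)$, it suffices to show that each single-entry replacement changes $\mathbb{E}\,\Theta$ by $o(N^{-2})$, so that the telescoping sum over the $MN$ steps is $o(1)$. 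The key analytic input is the resolvent expansion: if $X$ and $\widetilde X$ differ only in entry $(i,\mu)$, then $H$ and $\widetilde H$ differ by a matrix of rank at most $2$ and operator norm $O(N^{-1/2+\epsilon})$ (using the moment bound \eqref{INTRODUCTIONEQ} and a standard cutoff of the entries at scale $N^{-1/2+\epsilon}$), so $G = H^{-1}$ can be expanded in a finite Taylor/Neumann series in this perturbation. Matching the first two moments of the entries (for the edge case) makes the order-$0,1,2$ terms of the difference cancel in expectation, and the higher-order terms are bounded using the a priori control on the Green function entries provided by Lemma \ref{lem_anisotropic} and Corollary \ref{them_specific_bounds}.

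The main steps, in order, would be: (1) truncate the entries of $X_V$ and $X_G$ so that $|X_{i\mu}| \le N^{-1/2+\epsilon}$ with overwhelming probability, which does not affect the conclusion since $\theta$ has polynomially bounded derivatives and the singular vectors are delocalized (Lemma \ref{de_local}); (2) record the size of the integrand: on $I$ we have $|\tilde G_{\mu\nu}(z)| \le N^{-1/3+3\epsilon_0/2}$ with high probability by \eqref{Ebounds}, and $|I| = 2N^{-2/3+\epsilon}$, so the bracket $\frac{N}{\pi}\int_I \tilde G_{\mu\nu} q[\cdots]\,dE$ is of size $O(N^{\epsilon_0 + \epsilon})$ hence $\theta$ and its first three derivatives evaluated there are polynomially bounded; (3) for a single entry replacement, write $G$ in terms of the minor $G^{(i\mu)}$ (or directly expand $G = G^{(0)} - G^{(0)} \Delta G^{(0)} + \cdots$ where $\Delta = H - \widetilde H$) to isolate the dependence of $\Theta$ on $X_{i\mu}$, producing a Taylor expansion $\Theta(X) = \sum_{r=0}^{4} X_{i\mu}^r A_r + O(N^{-5/2+C\epsilon})$ where the coefficients $A_r$ are independent of $X_{i\mu}$ and bounded with high probability; (4) take expectations, use that the first two moments of the entries match (resp. four moments in the bulk case), to cancel the low-order terms, and estimate the remainder. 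The error per step is $O(N^{-5/2+C\epsilon})$ for the matching terms plus $O(N^{-5/2+C\epsilon})$ for the remainder, and summing over $MN = O(N^2)$ steps gives $o(1)$ once $\epsilon_0, \epsilon$ are chosen small enough (which is where the constraint \eqref{defn_epsion} and the freedom in $\epsilon_0$ enter).

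One technical point that needs care is that the integrand involves both $\tilde G_{\mu\nu}(z)$ (a single, or double, resolvent entry) and $q[\operatorname{Tr}(\mathcal X_E * \vartheta_{\tilde\eta})(Q_1)]$, which is a function of the trace $\operatorname{Tr} \mathcal G_2(w)$ integrated/averaged over $w$ near the real axis at the scale $\tilde\eta = N^{-2/3 - 9\epsilon_0}$; the latter is not a single resolvent entry but an average, so its sensitivity to a single entry of $X$ is genuinely smaller (by a factor $N^{-1}$ from the normalized trace) than that of $\tilde G_{\mu\nu}$. I would exploit this: in the Taylor expansion, terms where the $X_{i\mu}$-derivative hits the $q[\cdots]$ factor are smaller, so the leading contributions come from derivatives hitting $\tilde G_{\mu\nu}$, and the relevant cancellation is exactly the one in the eigenvector comparison of \cite{KY}. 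The role of $q$ being smooth with bounded derivatives (from \eqref{defn_q}) guarantees that differentiating it repeatedly stays bounded. Because the spectral parameter sits at distance $\eta = N^{-2/3-\epsilon_0}$ from the real axis near an edge $a_{2k-1}$, the naive bound $\|G\| \le \eta^{-1}$ is too weak and must be replaced everywhere by the anisotropic law estimate; ensuring every Green function factor appearing in the expansion can be controlled via Lemma \ref{lem_anisotropic} (i.e. is sandwiched against deterministic or delocalized vectors, or appears inside a normalized trace) is the bookkeeping heart of the argument.

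The hardest part, in my view, will be step (3)–(4): carefully organizing the Taylor expansion of $\Theta$ in a single entry so that (a) every coefficient is manifestly independent of $X_{i\mu}$ and controlled with high probability by the local law, (b) the first two (resp. four) moment-matching cancellations are made explicit, and (c) the remainder genuinely gains the extra powers of $N^{-1/2}$ needed to beat the $N^2$ factor from the number of replacement steps — all while the spectral parameter is very close to the real axis, so that only the anisotropic/averaged estimates, and not trivial norm bounds, are available. The treatment of several bulk components (the statement of Lemma \ref{greenfunction_comp} as used by Theorems \ref{thm_edge_sev} etc.) is then routine: the integrand becomes a product of factors, one per bulk, each localized in a disjoint window $I_{k_i}$, and the same replacement scheme applies verbatim with the product rule generating only finitely many extra terms, all of the same or smaller size.
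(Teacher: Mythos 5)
Your overall skeleton---Lindeberg replacement of one entry at a time, a finite resolvent expansion in the single-entry perturbation, cancellation of the low-order terms by two-moment matching, and a per-step error beating the $N^2$ replacement steps---is indeed the paper's strategy (the paper additionally first discards the diagonal terms of \eqref{eigenvectorrepresentation} via Lemma \ref{dropdiagonal} and works with $x(E)$, $y(E)$ from \eqref{reduction}). The genuine gap is in your steps (3)--(4). With only two moments matched, the third-order terms in the single-entry expansion do \emph{not} cancel, and your claimed per-step budget $O(N^{-5/2+C\epsilon})$ ``from the a priori control on the Green function entries'' is not attainable: a term such as $\Delta x_0^{(3)}$ consists of $N$ summands, each carrying $(X^G_{i\mu_1})^3\sim N^{-3/2}$ and at most three off-diagonal resolvent entries of size $N^{-1/3+C\epsilon_0}$ by \eqref{edgebound}; after the factors $N\eta$ and $|I|\sim N^{-2/3+\epsilon}$, the naive high-probability bound on its contribution is only $O(N^{-11/6+C\epsilon_0})$ per step, and summing over $N^2$ steps diverges. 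The paper closes exactly this gap in \eqref{llllll}--\eqref{11212121}: one must pass to the (partial) expectation $\mathbb{E}_\gamma$ and gain an extra $N^{-1/3}$ by expanding the off-diagonal entry $S_{\nu\mu_1}$ through the resolvent identity \eqref{resolvent2}, noting that the leading part has vanishing partial expectation, and controlling the remainder with the anisotropic law and the large deviation bound \eqref{largederivation}. Your proposal contains no mechanism of this kind, and without it the argument does not close.

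Two secondary inaccuracies. First, your heuristic that the factor $q[\operatorname{Tr}(\mathcal X_E*\vartheta_{\tilde\eta})(Q_1)]$ is less sensitive to a single entry ``by a factor $N^{-1}$ from the normalized trace'' is wrong: the trace is not normalized, and in the paper's estimates $\Delta y$ obeys the same bound $N^{-5/6+C\epsilon_0}$ as $\int_I|\Delta x_0|\,dE$ (see \eqref{elementbound1}); the term $x^Rq^{\prime}(y^R)\Delta y_0^{(3)}$ is of the same size as $\Delta x_0^{(3)}q(y^R)$ and requires the same refined expectation argument. Second, demanding a uniform $o(N^{-2})$ per replacement step is both stronger than needed and likely false for the $O(N)$ exceptional steps where $\mu_1\in\{\mu,\nu\}$; the paper tracks this through the parameters $s$ and $t$ in Lemmas \ref{lem_random_bound} and \ref{keykeykeykey}, proving $o(N^{-2+t})$ per step, which is what makes the final summation work.
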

Once Lemma \ref{greenfunction_comp} is proved, the proof of Lemma \ref{lem_onedimensionalcase} follows from Lemma \ref{truncation}.
\end{proof}

\subsection{Green function comparsion argument} \label{greenfunction_comp_edge}
In this section, we will prove Lemma \ref{greenfunction_comp} using the Green function comparison argument.  In the end of this section, we will discuss how we can extend Lemma \ref{lem_onedimensionalcase} to Theorem \ref{thm_edge} and Theorem \ref{thm_edge_sev}. By the orthonormal properties of $\xi, \zeta$ and (\ref{main_representation}), we have
\begin{equation} \label{eigenvectorrepresentation}
\tilde{G}_{ij}=\eta\sum_{k=1}^M G_{ik} \overline{G}_{jk}, \ \tilde{G}_{\mu \nu}=\eta \sum_{k=M+1}^{M+N} G_{\mu k} \overline{G}_{\nu k}.
\end{equation} 
\noindent By (\ref{thm_anisotropic_eq12}), with $1-N^{-D_1}$ probability, we have
\begin{equation} \label{edgebound}
\vert G_{\mu \mu} \vert =O(1), \ \vert G_{\mu \nu} \vert \leq N^{-1/3+2\epsilon_0}, \ \mu \neq \nu.
\end{equation}
We firstly drop the all diagonal terms in (\ref{eigenvectorrepresentation}).
\begin{lem} \label{dropdiagonal}  Recall $E_U=a_{2k-1}+2N^{-2/3+\epsilon}$ and $\tilde{\eta}=N^{-2/3-9\epsilon_0}$, we have
\begin{equation} \label{greenfunctionreduced1}
\mathbb{E}^V \theta \left[\frac{N}{\pi} \int_I \tilde{G}_{\mu\nu}(z)q[\operatorname{Tr}(\mathcal{X}_E* \vartheta_{\tilde{\eta}})(Q_1)]dE\right]-\mathbb{E}^V\theta \left[\int_I x(E) q(y(E))dE\right]=o(1), 
\end{equation}
where we denote $X_{\mu\nu,k}:=G_{\mu k}\overline{G}_{\nu k}$ and 
\begin{equation} \label{reduction}
x(E):=\frac{N \eta}{\pi} \sum_{k=M+1, \ \text{and} \  \neq \mu, \nu}^{M+N} X_{\mu\nu,k}(E+i\eta), \ y(E):=\frac{\tilde{\eta}}{\pi} \int_{E^{-}}^{E_U} \sum_k \sum_{\beta \neq k}X_{\beta \beta,k}(E+i\tilde{\eta})dE.
\end{equation}
The conclusion holds true if we replace $X_V$ with $X_G.$
\end{lem}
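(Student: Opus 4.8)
The goal is to show that the diagonal terms $X_{\mu\mu,k}$, $X_{\nu\nu,k}$ (and the $k=\mu,\nu$ terms) in the representation \eqref{eigenvectorrepresentation} of $\tilde G_{\mu\nu}$, as well as the analogous diagonal pieces hidden inside $\operatorname{Tr}(\mathcal X_E*\vartheta_{\tilde\eta})(Q_1)$, contribute only $o(1)$ after integrating against $\theta$. The plan is to isolate two quantities: the difference between $\frac{N}{\pi}\int_I\tilde G_{\mu\nu}(z)\,q[\cdots]\,dE$ and $\int_I x(E)q(y(E))\,dE$, and to bound it using the mean value theorem together with the polynomial bound \eqref{defn_theta1d} on $\theta'$ and the rough a priori control \eqref{edgebound} on the off-diagonal Green function entries coming from Corollary \ref{them_specific_bounds}.

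First I would handle the replacement of $\tilde G_{\mu\nu}$ by $\frac{N\eta}{\pi}\sum_{k\neq\mu,\nu}X_{\mu\nu,k}$. Using \eqref{eigenvectorrepresentation}, the discarded part is $\eta(G_{\mu\mu}\overline{G}_{\nu\mu}+G_{\mu\nu}\overline{G}_{\nu\nu})$; by \eqref{edgebound} this is $O(\eta N^{-1/3+2\epsilon_0})=O(N^{-1+\epsilon_0})$ with overwhelming probability, so after multiplying by $N$ and integrating over $I$ (length $O(N^{-2/3+\epsilon})$) the total error is $O(N^{-2/3+\epsilon+\epsilon_0})=o(1)$. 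The deterministic ($1-N^{-D_1}$)-complement event contributes a negligible amount since on it $\theta$ grows at most polynomially while the probability is superpolynomially small and all entries of $G$ are bounded by $\eta^{-1}$ deterministically. Similarly, inside $\operatorname{Tr}(\mathcal X_E*\vartheta_{\tilde\eta})(Q_1)=\frac{1}{\pi}\int_{E^-}^{E_U}\operatorname{Im}\operatorname{Tr}\,\mathcal G_2(E'+i\tilde\eta)\,dE'$ we rewrite $\operatorname{Im}\operatorname{Tr}\mathcal G_2=\tilde\eta\sum_\beta\sum_k |G_{\beta k}|^2=\tilde\eta\sum_k\sum_{\beta\neq k}X_{\beta\beta,k}+\tilde\eta\sum_k|G_{kk}|^2$; the dropped diagonal term $\tilde\eta\sum_k|G_{kk}|^2$ is $O(1)$ by \eqref{edgebound}, but I must argue that changing the argument of $q$ by $O(1)$ is harmless — this is exactly where the freedom in the definition of $q$ is used: $q$ is the fixed smooth function in \eqref{defn_q}, so I instead absorb the bounded, essentially deterministic shift into a redefinition of the cutoff (or, more carefully, show that the shift is close to its deterministic leading value $\tilde\eta\sum_k|\Pi_{kk}|^2$ via Lemma \ref{lem_anisotropic}, so that $q$ evaluated at the shifted argument equals $q$ evaluated at $y(E)$ plus a new fixed-width cutoff, which can be reincorporated). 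Then a single application of the mean value theorem in $\theta$, using $|\theta'(x)|\le C(1+|x|)^{C_1}$ together with the bound \eqref{Ebounds}-type estimate $|\frac{N}{\pi}\int_I\tilde G_{\mu\nu}q\,dE|\le N^{\delta+C\epsilon}$ on the magnitude of the argument, converts these $o(1)$ pointwise errors into an $o(1)$ error after taking $\mathbb E^V$.

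The main obstacle I anticipate is the $q$-argument: unlike the argument of $\theta$, which is small, the quantity $\operatorname{Tr}(\mathcal X_E*\vartheta_{\tilde\eta})(Q_1)$ is of order $l\le N_k^\delta$ and its diagonal-term correction is $O(1)$, which is not negligible relative to the scale ($\pm 1/3$) on which $q$ transitions. Resolving this requires showing the diagonal correction $\tilde\eta\sum_k|G_{kk}|^2$ concentrates around a \emph{deterministic} constant $c_N:=\tilde\eta\sum_k|z^{-1}(1+m\Sigma)^{-1}_{kk}|^2$ up to $o(1)$ error (via Corollary \ref{them_specific_bounds} and $N\tilde\eta\gg 1$), and then noting that $q[\operatorname{Tr}\mathcal X_E(Q_1)]=\tilde q[\,y(E)+c_N\,]$ where $\tilde q(\cdot):=q(\cdot - c_N + o(1))$ is again an admissible fixed-width smooth cutoff of the same type; after renaming $\tilde q$ back to $q$ the statement is exactly \eqref{greenfunctionreduced1}. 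The real-to-Gaussian ($X_V$ to $X_G$) symmetry is clear since every estimate used — \eqref{edgebound}, \eqref{Ebounds}, Lemma \ref{lem_rigidity} — holds under Assumption \ref{assu_main} for both ensembles, so the last sentence of the lemma follows verbatim.
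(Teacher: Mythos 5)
There is a genuine gap in the central step of your plan, the treatment of the argument of $q$. The quantity you discard is not a pointwise term $\tilde\eta\sum_k|G_{kk}|^2$: by \eqref{keyformula} the difference is $\operatorname{Tr}(\mathcal X_E*\vartheta_{\tilde\eta})(Q_1)-y(E)=\frac{\tilde\eta}{\pi}\int_{E^-}^{E_U}\sum_{\beta\in\mathcal I_2}|G_{\beta\beta}(w+i\tilde\eta)|^2\,dw$, and your size estimate for it is wrong in both directions. Pointwise the integrand $\tilde\eta\sum_\beta|G_{\beta\beta}|^2$ is of order $N^{1/3-9\epsilon_0}$ (not $O(1)$, as you claim, since the sum has $N$ terms each of order one), but the window $[E^-,E_U]$ has length $O(N^{-2/3+\epsilon})$, so the full difference is $O(N^{-1/3+\epsilon-9\epsilon_0})=o(1)$. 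Hence the "main obstacle" you identify does not exist: since $q$ is a fixed smooth function, one gets directly $|q[\operatorname{Tr}(\mathcal X_E*\vartheta_{\tilde\eta})(Q_1)]-q[y(E)]|\le N^{-1/3-7\epsilon_0}$, which is exactly how the paper argues (equations \eqref{differencetrqy} and \eqref{keyformula2}); no concentration around a deterministic constant and no modification of $q$ is needed.

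Moreover, the workaround you propose would not prove the lemma as stated. The function $q$ is the specific cutoff \eqref{defn_q} pinned to the integer label $l$, and the same $q$ must be carried through Lemma \ref{truncation} (where $q[\operatorname{Tr}\mathcal X_E(Q_1)]$ reproduces the sharp indicator $\mathbf 1(\mathcal N(E^-,E_U)=l)$ via \eqref{part_estimate}) and Lemma \ref{greenfunction_comp}. Shifting its argument by a non-integer constant $c_N=O(1)$ and "renaming $\tilde q$ back to $q$" breaks that chain, since the shifted cutoff is no longer equal to $1$ precisely on the event that the eigenvalue count equals $l$; and your candidate constant is also off, because for $k\in\mathcal I_2$ the deterministic approximation of $G_{kk}$ is $m(z)$, not $z^{-1}(1+m\Sigma)^{-1}_{kk}$. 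A further minor slip: in the first replacement step your bound $\eta\,|G_{\mu\mu}\overline G_{\nu\mu}+G_{\mu\nu}\overline G_{\nu\nu}|=O(\eta N^{-1/3+2\epsilon_0})$ only covers $\mu\ne\nu$; for $\mu=\nu$ the dropped term is only $O(\eta)$ pointwise, and it is again the integration over $I$ that makes its contribution $O(N\eta\,|I|)=O(N^{-1/3+\epsilon-\epsilon_0})$ negligible, as in \eqref{keyformula0}. The remaining ingredients of your plan (the mean value theorem for $\theta$ with \eqref{defn_theta1d}, the a priori bounds on $x(E)$, and the observation that all estimates hold for both $X_V$ and $X_G$) do match the paper's proof.
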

\begin{proof}
We first observe that by (\ref{edgebound}),  with $1-N^{-D_1}$ probability,  we have
\begin{equation}\label{taylor_bdd1}
\vert x(E) \vert  \leq  N^{2/3+3\epsilon_0},
\end{equation}
which implies that
\begin{equation} \label{taylor_bdd2}
\int_{I} \vert x (E)\vert dE \leq N^{4\epsilon_0}.
\end{equation}
By (\ref{eigenvectorrepresentation}) and (\ref{edgebound}), with $1-N^{-D_1}$ probability,  we have 
\begin{equation} \label{keyformula0}
\left |  \frac{N}{\pi} \tilde{G}_{\mu \nu}(E+i\eta)-x(E)   \right |=\frac{N \eta}{\pi}\vert G_{\mu \mu} \overline{G}_{\nu  \mu}+G_{\mu \nu} \overline{G}_{\nu \nu}\vert \leq N \eta (\mathbf{1}(\mu=\nu)+N^{-1/3+2\epsilon_0}\mathbf{1}(\mu \neq \nu)).
\end{equation}
By the equations (5.11) and (6.42) of \cite{DY}, we have
\begin{equation}\label{keyformula}
\operatorname{Tr}(\mathcal{X}_E*\vartheta_{\tilde{\eta}}(Q_1))=\frac{N}{\pi} \int_{E^-}^{E_U} \operatorname{Im} m_2(w+i\tilde{\eta})dw, \ \ \sum_{\mu \nu} \vert G_{\mu \nu} (w+i\tilde{\eta})\vert^2=\frac{N \operatorname{Im}m_2(w+i\tilde{\eta})}{\tilde{\eta}}.
\end{equation}
Therefore, we have
\begin{equation} \label{differencetrqy}
\operatorname{Tr}(\mathcal{X}_E*\vartheta_{\tilde{\eta}}(Q_1))-y(E)=\frac{\tilde{\eta}}{\pi}\int_{E^{-}}^{E_U} \sum_{\beta=M+1}^{M+N} |G_{\beta \beta}|^2 dw.
\end{equation}
By (\ref{differencetrqy}), mean value theorem and  the fact $q$ is smooth enough, we have 
\begin{equation} \label{keyformula2}
\left |  q[\operatorname{Tr}(\mathcal{X}_E* \vartheta_{\tilde{\eta}})(Q_1)]- q[y(E)]  \right | \leq N^{-1/3-7\epsilon_0}.
\end{equation}
Therefore, by mean value theorem, (\ref{defn_theta1d}), (\ref{defn_epsion}), (\ref{taylor_bdd1}), (\ref{taylor_bdd2}), (\ref{keyformula0}) and (\ref{keyformula2}),  we can conclude our proof. 
\end{proof}

To prove Lemma \ref{greenfunction_comp}, by (\ref{greenfunctionreduced1}), it suffices to prove 
\begin{equation} \label{lemma34reduced}
[\mathbb{E}^V-\mathbb{E}^G] \theta[\int_I x(E)q(y(E))dE]=o(1).
\end{equation}
For the rest, we will use the Green function comparison argument to prove (\ref{lemma34reduced}), where we follow the basic approach of \cite[Section 6]{DY} and \cite[Section 3.1]{KY}. Define a bijective ordering map $\Phi$ on the index set, where
\begin{equation*}
\Phi: \{(i,\mu_1):1 \leq i \leq M, \ M+1 \leq \mu_1 \leq M+N \} \rightarrow \{1,\ldots,\gamma_{\max}=MN\}.
\end{equation*} 
Recall that we relabel $X^V =((X_V)_{i \mu_1}, i \in \mathcal{I}_1, \mu_1 \in \mathcal{I}_2)$, similarly for $X^G.$ For any $1\le \gamma \le \gamma_{\max}$, we define the matrix $X_{\gamma}= \left(x^{\gamma}_{i\mu_1}\right)$ such that $x_{i \mu_1}^{\gamma} =X^G_{i\mu_1} $ if $\Phi(i,\mu_1)> \gamma$, and $x_{i\mu_1}^{\gamma} =X^V_{i\mu_1}$ otherwise. Note that $X_0=X^G$ and $X_{\gamma_{\max}}=X^V$. With the above definitions, we have 
\begin{equation*}
 [\mathbb{E}^G-\mathbb{E}^V] \theta[\int_I x(E)q(y(E))dE]=\sum_{\gamma=1}^{\gamma_{\max}}[\mathbb{E}^{\gamma-1}-\mathbb{E}^{\gamma}]\theta[\int_I x(E)q(y(E))dE].
\end{equation*}
For simplicity,  we rewrite the above equation as
\begin{equation} \label{keyreduceprof}
\mathbb{E}[\theta(\int_I x^G q(y^G)dE)-\theta(\int_I x^V q(y^V)dE)]=\sum_{\gamma=1}^{\gamma_{\max}}\mathbb{E}[\theta(\int_I x_{\gamma-1} q(y_{\gamma-1})dE)-\theta(\int_I x_{\gamma} q(y_{\gamma})dE)].
\end{equation}
The key step of the Green function comparison argument is to use Lindeberg replacement strategy and find an intermediate variable $\Omega \equiv \Omega(\gamma)$  satisfying $\Omega(\gamma)=\Omega(\gamma-1)$, such that we can write
\begin{equation*}
\theta(\int_I x_{\gamma-1} q(y_{\gamma-1})dE)-\theta(\int_I x_{\gamma} q(y_{\gamma})dE)=\left(\theta(\int_I x_{\gamma-1} q(y_{\gamma-1})dE)-\Omega(\gamma-1)\right)+\left(\theta(\int_I x_{\gamma} q(y_{\gamma})dE)-\Omega(\gamma)\right),
\end{equation*}
and  $\theta(\int_I x_{t} q(y_{t})dE)-\Omega(t), \ t=\gamma-1, \ \gamma$ are small enough. We focus on the indices $s,t \in \mathcal{I}$, the special case $\mu, \ \nu \in \mathcal{I}_2$ follow. 
Denote $Y_{\gamma}:=\Sigma^{1/2}X_{\gamma}$ and 
 \begin{equation}\label{Hgamma}
   H^{\gamma} := \left( {\begin{array}{*{20}c}
   { 0 } & z^{1/2}Y_{\gamma} \\
   z^{1/2}Y^{*}_{\gamma} & {0}  \\
   \end{array}} \right), \ \ G^\gamma:= \left( {\begin{array}{*{20}c}
   { -zI} & z^{1/2}Y_{\gamma}  \\
   {z^{1/2}Y^*_{\gamma}} & { - zI}  \\
\end{array}} \right)^{-1}.
 \end{equation}
As $\Sigma$ is diagonal,  for each fixed $\gamma$,  $\ H^{\gamma}$ and $H^{\gamma-1}$ differ only at $(i,\mu_1)$ and $(\mu_1,i)$ elements, where $\Phi(i,\mu_1) = \gamma$. Then we define the $(N+M)\times (N+M)$ matrices $V$ and $W$ by
$$V_{ab}=z^{1/2}\left(\mathbf{1}_{\{(a,b)=(i,\mu_1)\}} + \mathbf{1}_{\{(a,b)=(\mu_1,i)\}}\right) \sqrt{\sigma_i} X^G_{i\mu_1}, \ \ W_{ab}=z^{1/2}\left(\mathbf{1}_{\{(a,b)=(i,\mu_1)\}}+ \mathbf{1}_{\{(a,b)=(\mu_1,i)\}}\right) \sqrt{\sigma_i} X^V_{i\mu_1},$$
so that $H^{\gamma}$ and $H^{\gamma-1}$ can be written as
$$H^{\gamma-1}= O+ V, \ \ H^{\gamma} = O+W,$$
for some $(N+M)\times (N+M)$ matrix $O$ satisfying $O_{i\mu_1}=O_{\mu_1 i}=0$ and $O$ is independent of $V$ and $W$. We will take $O$ as our intermediate variable. Denote 
\begin{equation}\label{srt}
S:=(H^{\gamma-1}-z)^{-1}, \ R:=(O-z)^{-1}, \ T:=(H^{\gamma}-z)^{-1}.
\end{equation}
With the above definitions, we can write
\begin{equation}\label{finalgreenfunction}
\mathbb{E}[\theta(\int_I x^G q(y^G)dE)-\theta(\int_I x^V q(y^V)dE)]=\sum_{\gamma=1}^{\gamma_{\max}}\mathbb{E}[\theta(\int_I x^{S} q(y^{S})dE)-\theta(\int_I x^{T} q(y^{T})dE)].
\end{equation}
The comparison argument is based on the following resolvent expansion
\begin{equation} \label{resolvent}
S=R-RVR+(RV)^2R-(RV)^3R+(RV)^4S.
\end{equation}

\noindent For any integer $ m>0,$ by (6.11) of \cite{DY}, we have 
\begin{equation} \label{combinations}
([RV]^m R)_{ab}= \sum_{(a_i,b_i) \in \{(i,\mu_1), (\mu_1,i)\}:1 \leq i \leq m} (z)^{m/2}(\sigma_i)^{m/2} (X^G_{i\mu_1})^m R_{a a_1} R_{b_1 a_2} \cdots R_{b_m b} ,
\end{equation}
\begin{equation} \label{combinations1}
([RV]^m S)_{ab}= \sum_{(a_i,b_i) \in \{(i,\mu_1), (\mu_1,i)\}:1 \leq i \leq m} (z)^{m/2}(\sigma_i)^{m/2} (X^G_{i\mu_1})^m R_{a a_1} R_{b_1 a_2} \cdots S_{b_m b}.
\end{equation}
Denote
\begin{equation}\label{defn_deltax}
 \Delta X_{\mu \nu,k}:= S_{\mu k} \overline{S}_{\nu k}-R_{\mu k} \overline{R}_{\nu k}. 
\end{equation}
In \cite{KY}, the discussion relies on a crucial parameter (see (3.32) of \cite{KY}), which counts the maximum number of diagonal resolvent elements in $\Delta X_{\mu \nu, k}$. We will follow this strategy but using a different counting parameter and furthermore use (\ref{combinations}) and (\ref{combinations1}) as our key ingredients. Our discussion is slightly easier due to the loss of a free index (i.e. $i \neq \mu_1$). 

 Inserting (\ref{resolvent}) into (\ref{defn_deltax}), by (\ref{combinations}) and (\ref{combinations1}), we find that there exists a random variable $A_1$, which depends on the randomness only through $O$ and the first two moments of $X^G_{i \mu_1}$.  Taking the partial expectation with respect to the $(i, \mu_1)$-th entry of $X^G$(recall they are i.i.d), by (\ref{INTRODUCTIONEQ}), we have the following result.
\begin{lem} \label{lem_random_bound}
Recall (\ref{defn_psi}) and denote $\mathbb{E}_{\gamma}$ as the partial expectation with respect to $X^G_{i \mu_1}$, there exists some constant $C>0,$ with $1-N^{-D_1}$ probability, we have 
\begin{equation} \label{random1}
\left| \mathbb{E}_{\gamma}\Delta X_{\mu \nu,k}-A_1 \right | \leq N^{-3/2+C\epsilon_0} \Psi(z)^{3-s} , \ M+1 \leq k \neq  \mu, \nu \leq M+N,
\end{equation}
where $s$ counts the maximum number of  resolvent elements in $\Delta X_{\mu \nu, k}$ involving the index $\mu_1$ and defined as
\begin{equation} \label{defns}
s:=\mathbf{1}( ( \{ \mu, \nu\} \cap \{ \mu_1 \} \neq \emptyset) \cup (\{ k=\mu_1\})).
\end{equation}
\end{lem}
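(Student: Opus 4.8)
The plan is to insert the resolvent expansion (\ref{resolvent}) into the definition (\ref{defn_deltax}) of $\Delta X_{\mu\nu,k}$, expand both factors $S_{\mu k}$ and $\overline{S_{\nu k}}$ with the combinatorial identities (\ref{combinations}) and (\ref{combinations1}), and then sort the resulting finitely many terms by the power of $X^G_{i\mu_1}$ they carry (uniformly bounded since the expansion is truncated at $(RV)^4S$). The degree-zero term is exactly $R_{\mu k}\overline{R_{\nu k}}$, which cancels the subtracted piece in (\ref{defn_deltax}); the degree-one terms vanish after applying $\mathbb{E}_\gamma$ since $\mathbb{E}X^G_{i\mu_1}=0$; and the degree-two terms equal $N^{-1}$ times a sum of products of at most four entries of $R$, after using $\mathbb{E}(X^G_{i\mu_1})^2=N^{-1}$. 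Since these $R$-entries depend on the randomness only through $O$, their signed sum is precisely the random variable $A_1$, which by construction depends on the randomness only through $O$ and on the first two moments of $X^G_{i\mu_1}$ (hence is the same whether the $\gamma$-th entry is taken from $X^G$ or $X^V$). What remains is to bound $\mathbb{E}_\gamma$ of the degree-$\ge 3$ terms by $N^{-3/2+C\epsilon_0}\Psi(z)^{3-s}$.

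For this I would combine three ingredients. First, from (\ref{INTRODUCTIONEQ}) one has $\mathbb{E}|X^G_{i\mu_1}|^m\le C_mN^{-m/2}$ and, on an event of probability $1-N^{-D_1}$, $|X^G_{i\mu_1}|\le N^{-1/2+\epsilon}$. Second, the entrywise bounds $|R_{aa}|,|S_{aa}|=O(1)$ and $|R_{ab}|,|S_{ab}|\le N^{\epsilon_1}\Psi(z)$ for $a\ne b$: for $S=(H^{\gamma-1}-z)^{-1}$ these come from Lemma \ref{lem_anisotropic} and Corollary \ref{them_specific_bounds} applied to $X_{\gamma-1}$, and for $R=(O-z)^{-1}$ they transfer from $S$ through the rank-two identity $R=S+SVR$. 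Third, a counting step: in $([RV]^mR)_{\mu k}$ the chain $R_{\mu a_1}R_{b_1a_2}\cdots R_{b_mk}$ uses only the indices $\{i,\mu_1\}$ internally, and since $i\in\mathcal I_1$ while $\mu,\nu,k,\mu_1\in\mathcal I_2$, an interior factor $R_{b_ja_{j+1}}$ is diagonal only when the pair types $(a_j,b_j)\in\{(i,\mu_1),(\mu_1,i)\}$ alternate, in which case the two boundary factors $R_{\mu a_1}$, $R_{b_mk}$ are still off-diagonal unless $\mu_1\in\{\mu,k\}$, and at most one of them can become diagonal. Together with $\mu\ne\nu$ and $k\notin\{\mu,\nu\}$ (which forces any undifferentiated factor $R_{\mu k}$ or $R_{\nu k}$ to be off-diagonal), a short case check over the constantly many degree-$\ge3$ terms shows that each carries at least $3-s$ off-diagonal resolvent factors, with $s$ as in (\ref{defns}). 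Applying $\mathbb{E}_\gamma$, the degree-three terms (which involve only $R$) contribute $O(N^{-3/2})\cdot N^{C\epsilon_1}\Psi^{3-s}$, and all degree-$\ge4$ terms — including the remainder $([RV]^4S)_{\mu k}\overline{S_{\nu k}}$, estimated deterministically on the good event with its complement absorbed into the $N^{-D_1}$-probability error via the crude bound $\|(H^\gamma-z)^{-1}\|\le\eta^{-1}$ — are of size $\le N^{-2+C\epsilon}\Psi^{2-s}$, which is $\ll N^{-3/2+C\epsilon_0}\Psi^{3-s}$ because $\Psi(z)\ge(N\eta)^{-1}=N^{-1/3+\epsilon_0}\gg N^{-1/2}$ on the scale $\eta=N^{-2/3-\epsilon_0}$ of (\ref{edgeeta}). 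Using $\epsilon_0>\epsilon_1$ and $\epsilon<c\epsilon_0$ then gives the claim.

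The hard part is the third ingredient: organising the finitely many expansion terms and verifying, uniformly over the choices of pair types $(a_j,b_j)$, that one can never save more than one off-diagonal resolvent factor, i.e.\ that the off-diagonal count never drops below $3-s$. This is the analogue of the parameter counting of (3.32) of \cite{KY} adapted to the linearisation (\ref{Hgamma}); it is somewhat cleaner here because the block structure $i\in\mathcal I_1\neq\mathcal I_2\ni\mu_1$ removes a free summation index present in the Wigner case. A secondary technicality is the transfer of the anisotropic local law from $S$ to the punctured resolvent $R$ via $R=S+SVR$, and the bookkeeping needed when passing between high-probability entrywise bounds and the partial expectation $\mathbb{E}_\gamma$.
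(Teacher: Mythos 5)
Your proposal follows essentially the same route as the paper: insert the expansion (\ref{resolvent}) into (\ref{defn_deltax}), absorb the terms of degree at most two in $X^G_{i\mu_1}$ into $A_1$, and bound the degree-$\geq 3$ terms via the moment bound (\ref{INTRODUCTIONEQ}), the entrywise bounds (\ref{edgebound}), and the observation that since $i\in\mathcal I_1$ and $\mu,\nu,k\in\mathcal I_2$ at least $3-s$ off-diagonal resolvent factors always survive (the paper's ``worst scenario'' count). Your write-up is in fact somewhat more explicit than the paper's (transfer of the local law to $R$ via $R=S+SVR$, separate treatment of the $(RV)^4S$ remainder and of the degree-$\geq4$ terms), but the argument is the same and is correct.
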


\begin{proof}
Inserting (\ref{resolvent}) into (\ref{defn_deltax}), the terms in the expansion containing $X^G_{i \mu_1}, \ (X^G_{i \mu_1})^2$ will be included in $A_1$, 
we only consider the terms containing $(X^G_{i \mu_1})^m, m \geq 3.$ We consider $m=3$ and discuss the following terms,
\begin{equation*}
 R_{\mu k}\overline{[(RV)^3 R]}_{\nu k}, \ [RVR]_{\mu k}\overline{[(RV)^2R]}_{\nu k}.
\end{equation*}
By (\ref{combinations}), we have 
\begin{equation} \label{combi1}
R_{\mu k}\overline{[(RV)^3R]}_{\nu k}= R_{\mu k} (\sum  (\sigma_i)^{3/2} (X^G_{i \mu_1})^3\overline{(z)^{3/2}R_{\nu a_1}R_{b_1 a_2} R_{b_2 a_3} R_{b_3 k}}.
\end{equation}
In the worst scenario, $R_{b_1 a_2}$ and $ R_{b_2 a_3}$ are assumed to be the diagonal entries of $R$.  Similarly, we have 
\begin{equation} \label{sum1}
[RVR]_{\mu k}\overline{[(RV)^2R]}_{\nu k}= (\sum z^{1/2}\sigma_i^{1/2} X_{i \mu_1}^G R_{\mu a_1} R_{b_1 k} )(\sum \sigma_i(X_{i \mu_1}^G)^2 \overline{zR_{\nu a_1} R_{b_1 a_2} R_{b_2 k}} ),
\end{equation}
and the worst scenario is the case when $R_{b_1 a_2}$ is a diagonal term.  As $ \mu, \nu \neq i$ is always true and there are only finite terms of summation, by (\ref{INTRODUCTIONEQ}) and (\ref{edgebound}), for some constant $C$, we have 
\begin{equation*}
\mathbb{E}_{\gamma} |R_{\mu k}\overline{[(RV)^3R]}_{\nu k}| \leq  N^{-3/2+C\epsilon_0} \Psi(z)^{3-s}.  
\end{equation*}
Similarly, we have
\begin{equation*}
\mathbb{E}_{\gamma} | [RVR]_{\mu k}\overline{[(RV)^2R]}_{\nu k}| \leq N^{-3/2+C\epsilon_0}\Psi(z)^{3-s}.  
\end{equation*}
The other cases $4 \leq m \leq 8$ can be handled similarly.  Hence, we conclude our proof. 
\end{proof}

Lemma \ref{greenfunction_comp} follows from the following result. Recall (\ref{reduction}), denote 
\begin{equation*}
\Delta x(E):=x^S(E)-x^R(E), \ \Delta y(E):=y^S(E)-y^R(E).
\end{equation*}
\begin{lem} \label{keykeykeykey}
For any fixed $\mu, \nu, \gamma$, there exists a random variable $A$, which depends on the randomness only through $O$ and the first two moments of $X^G$,  such that
\begin{equation} \label{finalbound}
\mathbb{E} \theta [\int_I x^S q(y^S)dE]-\mathbb{E}\theta [\int_I x^R q(y^R)dE]=A+o(N^{-2+t}),
\end{equation}
where $t:=|\{\mu, \nu\} \cap \{\mu_1\}|$ and $t=0, 1$ counts if there is $\mu, \ \nu$ equals to $\mu_1.$ 
\end{lem}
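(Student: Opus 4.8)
The plan is to carry out a standard Lindeberg-type resolvent expansion, treating $R=(O-z)^{-1}$ as the ``reference'' resolvent that is independent of the $(i,\mu_1)$-th entry of $X^G$, and expanding both $x^S$ and $y^S$ around $x^R$ and $y^R$ in powers of this single scalar entry. First I would use the resolvent expansion \eqref{resolvent} together with the explicit formulas \eqref{combinations} and \eqref{combinations1} to write $\Delta x(E)$ and $\Delta y(E)$ as finite sums of monomials in $X^G_{i\mu_1}$ with coefficients that are products of resolvent entries of $R$ (and, for the remainder term, one entry of $S$). The key bookkeeping device is the counting parameter $s$ from \eqref{defns}, which records how many of the resolvent entries in a given monomial carry the ``expensive'' index $\mu_1$; by the off-diagonal bound in \eqref{edgebound}, each such factor gains a power $N^{-1/3+2\epsilon_0}$, which is what ultimately produces the gain $N^{-2+t}$ rather than merely $N^{-2}$. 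Lemma~\ref{lem_random_bound} already packages exactly this estimate for $\mathbb{E}_\gamma\Delta X_{\mu\nu,k}$, so the bulk of the work is to propagate it through the $E$-integral and through the (smooth, compactly supported) cutoff $q$.

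Next I would Taylor-expand $\theta$. Writing $F^S:=\int_I x^S q(y^S)\,dE$ and $F^R:=\int_I x^R q(y^R)\,dE$, I expand
\begin{equation*}
\theta(F^S)-\theta(F^R)=\theta'(F^R)(F^S-F^R)+\tfrac12\theta''(F^R)(F^S-F^R)^2+\tfrac16\theta'''(\xi)(F^S-F^R)^3,
\end{equation*}
and then expand $F^S-F^R$ itself to the appropriate order in $X^G_{i\mu_1}$: from \eqref{reduction},
\begin{equation*}
F^S-F^R=\int_I \big(\Delta x(E)\, q(y^R(E))+x^R(E)\,q'(y^R(E))\,\Delta y(E)+\cdots\big)\,dE,
\end{equation*}
where the omitted terms are higher order in $\Delta x,\Delta y$. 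Using the a priori bounds $|x(E)|\le N^{2/3+3\epsilon_0}$ and $\int_I|x(E)|\,dE\le N^{4\epsilon_0}$ from \eqref{taylor_bdd1}--\eqref{taylor_bdd2}, the smoothness and boundedness of $q,q',\dots$, the polynomial growth of the derivatives of $\theta$ in \eqref{defn_theta1d}, the delocalization/anisotropic bounds \eqref{edgebound}, and the fact that $|I|\le 2N^{-2/3+\epsilon}$, every monomial of degree $m$ in $X^G_{i\mu_1}$ contributes a deterministic size $N^{-m/2}$ up to factors $N^{C\epsilon_0}$ and $N^{C\epsilon}$, with an extra gain $N^{-1/3+2\epsilon_0}$ for each resolvent factor carrying $\mu_1$. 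The contributions of the monomials of degree $0,1,2$ depend on the randomness only through $O$ and the first two moments of $X^G_{i\mu_1}$ (which are matched by $X^V_{i\mu_1}$), so they are collected into the random variable $A$; the monomials of degree $\ge 3$ are error terms. I would then check, exactly as in Lemma~\ref{lem_random_bound} but now after integrating over $E\in I$ and differentiating $\theta$, that the degree-$\ge 3$ error is $o(N^{-2+t})$: the worst case is $m=3$ with no $\mu_1$-index among the three free resolvent factors when $t=0$, giving $N^{-3/2}\cdot N^{C\epsilon_0}\cdot(N^{-1/3+2\epsilon_0})^{?}$ against the prefactors, and the choice \eqref{defn_epsion} of $\epsilon$ relative to $\epsilon_0$ makes all such terms $o(N^{-2})$; when $t=1$ one of the fixed indices $\mu,\nu$ already equals $\mu_1$, so $s\ge 1$ automatically and the same computation yields $o(N^{-2+1})$.

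The main obstacle, and the step I would spend the most care on, is the treatment of the $q$-factor and of $\Delta y(E)$. Unlike $x(E)$, the argument $y(E)$ of the cutoff involves a trace $\sim\operatorname{Tr}(\mathcal{X}_E*\vartheta_{\tilde\eta})(Q_1)$ evaluated at the very small scale $\tilde\eta=N^{-2/3-9\epsilon_0}$, and a naive bound on $\Delta y$ is not small; one must exploit that $q$ and all its derivatives are supported in $||y|-l|\le 2/3$, i.e.\ that $q^{(r)}(y^R)$ is nonzero only on a set of $E$ of controlled measure, and combine this with the improved bound \eqref{keyformula2} on $q[\operatorname{Tr}(\cdots)]-q[y]$ together with the identity \eqref{keyformula} relating the trace and $\sum_{\mu\nu}|G_{\mu\nu}|^2$. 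Concretely I would bound $\Delta y(E)$ by a sum over $k$ of $\int_{E^-}^{E_U}|\Delta X_{\beta\beta,k}|$, apply Lemma~\ref{lem_random_bound} in its $s$-dependent form (noting that a diagonal index $\beta=\mu_1$ forces $s\ge1$), and absorb the resulting powers using \eqref{taylor_bdd2} and the $N^{-1/3-7\epsilon_0}$-type gain already recorded in \eqref{keyformula2}. Once this is under control, collecting the degree-$\le 2$ terms into $A$ and the rest into $o(N^{-2+t})$ completes the proof; summing \eqref{finalbound} over the $\gamma_{\max}=MN\sim N^2$ steps (with $t=0$ for all but $O(N)$ values of $\gamma$, and $t=1$ for the remaining $O(N)$) then yields $[\mathbb{E}^V-\mathbb{E}^G]\theta[\int_I x q(y)dE]=o(1)$, which is \eqref{lemma34reduced} and hence Lemma~\ref{greenfunction_comp}.
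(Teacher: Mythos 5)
Your overall skeleton (Lindeberg replacement, resolvent expansion \eqref{resolvent} with \eqref{combinations}--\eqref{combinations1}, Taylor expansion of $\theta$, collecting the monomials of degree $\le 2$ in $X^G_{i\mu_1}$ into $A$, and the $s$-counting of Lemma \ref{lem_random_bound}) is the same as the paper's. But there is a genuine gap at the step you dismiss with naive power counting: the claim that all monomials of degree $\ge 3$ are $o(N^{-2+t})$ ``by the choice \eqref{defn_epsion} of $\epsilon$ relative to $\epsilon_0$'' is false for the degree-three terms when $t=0$. Count the worst case explicitly: the partial expectation of a term of $\Delta x_0^{(3)}$ carries the third moment $\mathbb E_\gamma (X^G_{i\mu_1})^3=O(N^{-3/2})$ (and for the matrix $T$ built from $X^V$, to which the lemma must equally apply since only two moments are matched, this moment is genuinely nonzero), the prefactor $N\eta\sim N^{1/3-\epsilon_0}$, a sum over $N$ values of $k$, at best three off-diagonal resolvent factors of size $N^{-1/3+2\epsilon_0}$ each (this is exactly $\Psi^{3-s}$ with $s=0$), and the integration over $|I|\sim N^{-2/3+\epsilon}$; the product is $N^{-11/6+C(\epsilon_0+\epsilon)}$, which exceeds the target $o(N^{-2})$ by a fixed power $N^{1/6}$ that no choice of the small constants $\epsilon,\epsilon_0$ can absorb. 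The same deficit occurs for the companion term $x^Rq'(y^R)\Delta y_0^{(3)}$, so your proposed remedy for the ``main obstacle'' (support of $q$, the bound \eqref{keyformula2}, and \eqref{keyformula}) does not address the real difficulty either: the problem is not the cutoff $q$ but the size of the third-order terms themselves.

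The paper closes this gap with an argument your proposal does not contain: after reducing, as in \eqref{lastlastlast}, to the single expectation $\mathbb E\, B^R R_{\mu k}\overline{R_{\nu\mu_1}R_{ik}}\,q(y^R)$, one must improve the trivial bound $N^{-1+C\epsilon_0}$ to $N^{-4/3+C\epsilon_0}$, i.e.\ prove \eqref{11212121}. This extra factor $N^{-1/3}$ is not a deterministic size gain; it comes from expanding $S_{\nu\mu_1}$ through the resolvent identities \eqref{resolvent1}--\eqref{resolvent2} as in \eqref{fffff}, observing that the conditional expectation of the leading term (the part proportional to $m(z)$) vanishes, and bounding the remainder by $|S_{\nu\nu}-m(z)|\le N^{-1/3+C\epsilon_0}$ together with the large deviation estimate \eqref{largederivation}. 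In other words, one must exploit cancellation in expectation over the randomness of the $\mu_1$-column beyond the single entry being swapped, not just moment matching of $X^G_{i\mu_1}$ and high-probability bounds on resolvent entries. Without this step (or an equivalent fluctuation-averaging argument), the error after summing over the $\sim N^2$ replacement steps is $O(N^{1/6+C\epsilon_0})$ rather than $o(1)$, and the comparison \eqref{lemma34reduced} does not follow.
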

Before proving Lemma \ref{keykeykeykey}, we firstly show how Lemma \ref{keykeykeykey} implies Lemma \ref{greenfunction_comp}.
\begin{proof}[Proof of Lemma \ref{greenfunction_comp}] It is easy to check that Lemma \ref{keykeykeykey} still holds true when we replace $S$ with $T$. Note in (\ref{finalgreenfunction}),  there are $O(N)$ terms when $t=1$ and $O(N^2)$ terms when $t=0$. By  (\ref{finalbound}), we have
\begin{equation*}
\mathbb{E}[\theta(\int_I x^G q(y^G)dE)-\theta(\int_I x^V q(y^V)dE)]=o(1),
\end{equation*} 
where we use the assumption that the first two moments of $X^V$ are the same with $X^G.$
Combine with (\ref{greenfunctionreduced1}), we conclude the proof.
\end{proof}
Finally we will follow the approach of \cite[Lemma 3.6]{KY} to finish the proof of Lemma \ref{keykeykeykey}. A key observation is that when $s=0,$ we will have a smaller bound but the total number of such terms are $O(N)$ for $x(E)$ and $O(N^2)$ for $y(E).$  And when $s=1,$ we have a larger bound but the number of such terms are $O(1).$   We need to analyze the items with $s=0,1$ separately. 
\begin{proof}[Proof of Lemma \ref{keykeykeykey}]
Condition on the variable $s=0,1,$ we introduce the following decomposition 
\begin{equation*}
x_s(E):= \frac{N \eta}{\pi} \sum_{k=M+1, \ \text{and} \  \neq \mu, \nu}^{M+N} X_{\mu\nu,k}(E+i \eta) \mathbf{1}(s=\mathbf{1}\left((\{ \mu, \nu\} \cap \{ \mu_1 \} \neq \emptyset) \cup (\{ k=\mu_1\})\right)),
\end{equation*}
\begin{equation*}
y_s(E):=\frac{\tilde{\eta}}{\pi} \int^{E_U}_{E^{-}} \sum_k \sum_{\beta \neq k}X_{\beta \beta,k}(E+i\tilde{\eta})dE \mathbf{1}(s=\mathbf{1}((\{ \beta= \mu_1 \}) \cup (\{ k=\mu_1\}))).
\end{equation*}
$\Delta x_s, \Delta y_s$ can be defined in the same fashion. Similar to the discussion of (\ref{random1}), for any $E$-dependent variable $f \equiv f(E)$ independent of the $(i,\mu_1)$-th entry of $X^{G},$ there exist two random variables $A_2, A_3$, which depend on the randomness only through $O$, $f$ and the first two moments of $X^G_{i \mu_1}$, for any event $\Omega,$  with $1-N^{-D_1}$ probability,  we have
\begin{equation} \label{random2}
\left | \int_I \mathbb{E}_{\gamma} \Delta x_s(E)f(E)dE-A_2 \right | \mathbf{1}(\Omega) \leq \vert \vert f \mathbf{1}(\Omega) \vert \vert_{\infty} N^{-11/6+C\epsilon_0} N^{-2s/3+t},
\end{equation}
\begin{equation} \label{random4}
\left |  \mathbb{E}_{\gamma}  \Delta y_s(E)-A_3 \right | \leq N^{-11/6+C\epsilon_0}N^{-2s/3}.
\end{equation}
In our application, $f$ is usually a function of the entries of $R$ (recall $R$ is independent of $V$).   Next, we use
\begin{equation} \label{beforetaylor}
\theta [\int_I x^S q(y^S)dE]=\theta[\int_I (x^R+\Delta x_0+\Delta x_1)q(y^R+\Delta y_0+ \Delta y_1)dE].
\end{equation}
By (\ref{resolvent}), (\ref{combinations}) and (\ref{combinations1}), it is easy to check that, with $1-N^{-D_1}$ probability, we have
\begin{equation} \label{elementbound1}
\int_I |\Delta x_s(E)|dE \leq N^{-5/6+C\epsilon_0}N^{-2s/3+t} , \ | \Delta y_s(E) | \leq N^{-5/6+C\epsilon_0}N^{-2s/3},
\end{equation}
\begin{equation} \label{elementbound2}
\int_I |x(E)| dE \leq N^{C\epsilon_0}, \ |y(E)| \leq N^{C\epsilon_0}.
\end{equation}
By (\ref{beforetaylor}) and (\ref{elementbound1}),  with $1-N^{-D_1}$ probability, we have 
\begin{align*}
\theta [\int_I x^S q(y^S)dE]= \theta[\int_I x^S(q(y^R)+q^{\prime}(y^R)(\Delta y_0+\Delta y_1)+q^{\prime \prime}(y^R)(\Delta y_0)^2)dE]+o(N^{-2}). 
\end{align*}
Similarly, we have (see (3.44) of \cite{KY})
\begin{align} \label{lasttwo}
& \theta  [\int_I x^S q(y^S)dE]- \theta  [\int_I x^R q(y^R)dE]=\theta^{\prime} [\int_I x^R q(y^R)dE] \nonumber \\
& \times [\int_I \left( (\Delta x_0 + \Delta x_1)q(y^R)+x^Rq^{\prime}(y^R)(\Delta y_0 + \Delta y_1)+\Delta x_0 q^{\prime}(y^R)\Delta y_0+x^R q^{\prime \prime}(y^R)(\Delta y_0)^2 \right) dE] \nonumber \\
&+ \frac{1}{2}\theta^{\prime \prime}[\int_I x^R q(y^R)dE][\int_I (\Delta x_0q(y^R)+x^R q^{\prime}(y^R)\Delta y_0) dE]^2+o(N^{-2+t}).
\end{align}
Now we start dealing with the individual terms on the right-hand side of (\ref{lasttwo}). Firstly, we consider the terms containing $\Delta x_1, \ \Delta y_1$. Similar to (\ref{random1}), we can find a random variable $A_4,$ which depends on randomness only through $O$ and the first two moments of $X^G_{i \mu_1},$ such that with $1-N^{-D_1}$ probability,
\begin{equation*}
\left|\mathbb{E}_{\gamma} \int_I (\Delta x_1 q(y^R)+x^R q^{\prime}(y^R)\Delta y_1) dE-A_4 \right|=o(N^{-2+t}).
\end{equation*}
Hence, we only need to focus on $\Delta x_0, \ \Delta y_0.$ We first observe that
\begin{equation*}
\Delta x_0(E)=\mathbf{1}(t=0) \frac{N \eta}{\pi} \sum_{k \neq \mu,  \nu, \mu_1 } \Delta X_{\mu \nu, k}(z),
\end{equation*}
\begin{equation*}
\Delta y_0(E)= \frac{\tilde{\eta}}{\pi} \int^{E_U}_{E^{-}} \sum_{k \neq \mu_1} \sum_{\beta \neq k, \mu_1} \Delta X_{\beta \beta, k}(E+i\tilde{\eta}) dE.
\end{equation*}
Denote $\Delta x_0^{(k)}(E)$ by the summations of the terms in $\Delta x_0(E)$ containing $k$ items of $X^G_{i \mu_1}$. By (\ref{edgebound}), (\ref{resolvent}) and (\ref{combinations}), it is easy to check that with $1-N^{-D_1}$ probability, 
\begin{equation} \label{bbbbdddd}
|\Delta x_0^{(3)}| \leq N^{-7/6+C\epsilon_0}, \ |\Delta y_0^{(3)}| \leq N^{-11/6+C\epsilon_0}. 
\end{equation}
We now decompose $\Delta X_{\mu \nu, k}$ into three parts indexed by the number of $X^G_{i \mu_1}$ they contain. By (\ref{edgebound}), (\ref{combinations}), (\ref{combinations1}) and (\ref{bbbbdddd}), with $1-N^{-D_1}$ probability,  we have
\begin{equation} \label{deltadecop1}
\Delta X_{\mu \nu, k}= \Delta X^{(1)}_{\mu \nu,k}+ \Delta X^{(2)}_{\mu \nu,k}+  \Delta X^{(3)}_{\mu \nu,k}+ O(N^{-3+C\epsilon_0}),
\end{equation}
\begin{equation}\label{deltadecop2}
\Delta x_0=\Delta x_0^{(1)}+\Delta x_0^{(2)}+ \Delta x_0^{(3)}+O(N^{-5/3+C \epsilon_0}),
\end{equation}
\begin{equation}\label{deltadecop3}
\Delta y_0=\Delta y_0^{(1)}+\Delta y_0^{(2)}+ \Delta y_0^{(3)}+O(N^{-7/3+C \epsilon_0}).
\end{equation}
Inserting (\ref{deltadecop2}) and (\ref{deltadecop3}) into (\ref{lasttwo}), similar to the discussion of (\ref{random1}), we can find a random variable $A_5$ depending on the randomness only through $O$ and the first two moments of $X^G_{i \mu_1},$ such that  with $1-N^{-D_1}$ probability,
\begin{align} \label{lasttwotwo}
& \mathbb{E}_{\gamma} \theta [\int_I x^S q(y^S)dE]-\mathbb{E}_{\gamma} \theta [\int_I x^R q(y^R)dE] \nonumber \\
& = \mathbb{E}_{\gamma} \theta^{\prime}[\int_I x^R q(y^R)dE][\int_I \Delta x_0^{(3)} q(y^R)+x^R q^{\prime}(y^R)\Delta y_0^{(3)}dE]+ A_4+A_5+o(N^{-2+t}).
\end{align}

\noindent Lemma \ref{keykeykeykey} will be proved if we can show
\begin{equation} \label{llllll}
\mathbb{E}\theta^{\prime}[\int_I x^R q(y^R)dE][\int_I \Delta x_0^{(3)} q(y^R)+x^R q^{\prime}(y^R)\Delta y_0^{(3)}dE]=o(N^{-2}).
\end{equation}
Due to the similarity, we shall prove
\begin{equation*}
\mathbb{E}\theta^{\prime}[\int_I x^R q(y^R)dE][\int_I \Delta x_0^{(3)} q(y^R)dE]=o(N^{-2}),
\end{equation*}
the other term follows. By (\ref{defn_theta1d}) and (\ref{elementbound2}), with $1-N^{-D_1}$ probability, we have $|B^R|:=\left | \theta^{\prime}[\int_I x^R q(y^R)dE] \right |\leq N^{C\epsilon_0}.$ Similar to (\ref{combi1}), $\Delta x_0^{(3)}$ is a finite sum of terms of the form 
\begin{equation}\label{deltax3expanson}
\mathbf{1}(t=0) N \eta \sum_{k \neq \mu, \nu, \mu_1 } R_{\mu k} (\sigma_i)^{3/2} (X^G_{i \mu_1})^3 \overline{z^{3/2} R_{\nu a_1} R_{b_1 a_2} R_{b_2 a_3} R_{b_3 k}}.
\end{equation}
Inserting (\ref{deltax3expanson}) into $\int_I \Delta x_0^{(3)} q(y^R)dE$,  for some constant $C>0$, we have
\begin{align} \label{lastlastlast}
\left | \mathbb{E} \theta^{\prime}[\int_I x^R q(y^R)dE][\int_I \Delta x_0^{(3)} q(y^R)dE] \right | \leq N^{-5/6+C\epsilon_0}\max_{k \neq \mu, \nu, \mu_1} \sup_{E \in I}\left | \mathbb{E}B^R R_{\mu k} \overline{R_{\nu \mu_1}R_{i k}} q(y^R) \right |+o(N^{-2}).
\end{align}
Again by (\ref{resolvent}), (\ref{combinations}) and (\ref{combinations1}), it is easy to check that with $1-N^{-D_1}$ probability,  for some constant $C>0$, we have  
\begin{equation*}
|R_{\mu k} \overline{R_{\nu \mu_1} R_{i k}} B^R q(y^R)-S_{\mu k}\overline{S_{\nu \mu_1} S_{i k}} B^S q(y^S) | \leq N^{-4/3+C \epsilon_0}.
\end{equation*}
Therefore, if we can show
\begin{equation} \label{11212121}
|\mathbb{E}S_{\mu k}\overline{S_{\nu \mu_1} S_{i k}} B^S q(y^S)| \leq N^{-4/3+C \epsilon_0},
\end{equation}
then by (\ref{lastlastlast}), we finish proving (\ref{llllll}). The rest leaves to prove (\ref{11212121}). Recall Definition \ref{minors} and (\ref{Hgamma}), by \cite[Lemma 3.2 and 3.3]{XYY}(or \cite[Lemma A.2]{DY}), we have the following resolvent identities, 
\begin{equation}\label{resolvent1}
S^{(\mu_1)}_{\mu \nu}=S_{\mu \nu}-\frac{S_{\mu \mu_1}S_{\mu_1 \nu}}{S_{\mu_1 \mu_1}}, \ \mu, \nu \neq \mu_1, 
\end{equation}
\begin{equation} \label{resolvent2}
S_{\mu \nu}=zS_{\mu \mu} S_{\nu \nu}^{(\mu)}(Y_{\gamma-1}^* S^{(\mu \nu)} Y_{\gamma-1})_{\mu \nu}, \ \mu \neq \nu.
\end{equation}
By (\ref{combinations}), (\ref{combinations1}) and (\ref{resolvent1}),  it is easy to check that (see (3.72) of \cite{KY}),
\begin{equation}\label{tri1}
|S_{\mu k}\overline{S_{\nu \mu_1} S_{i k}} B^S q(y^S)-S^{(\mu_1)}_{\mu k}\overline{S_{\nu \mu_1} S^{(\mu_1)}_{i k}} (B^S)^{(\mu_1)} q((y^S)^{(\mu_1)})| \leq N^{-4/3+C \epsilon_0}.
\end{equation}
Moreover, by (3.73) of \cite{KY},  we have
\begin{equation}\label{131414141}
S^{(\mu_1)}_{\mu k}\overline{S_{\nu \mu_1} S^{(\mu_1)}_{i k}} (B^S)^{(\mu_1)} q((y^S)^{(\mu_1)})=(S_{\mu k} \overline{S_{i k}}B^S q(y^S))^{(\mu_1)} \overline{S}_{\nu \mu_1}.
\end{equation}
As $t=0,$ by (\ref{resolvent2}), we have
\begin{equation} \label{fffff}
S_{\nu \mu_1}= zm(z) S_{\mu_1 \mu_1}^{(\nu)} \sum_{p,q} S_{pq}^{(\nu \mu_1)} (Y_{\gamma-1}^*)_{\nu p} (Y_{\gamma-1})_{q\mu_1}+z(S_{\nu \nu}-m(z)) S_{\mu_1 \mu_1}^{(\nu)} \sum_{p,q} S_{pq}^{(\nu \mu_1)} (Y_{\gamma-1}^*)_{\nu p} (Y_{\gamma-1})_{q\mu_1}.
\end{equation}
The conditional expectation $\mathbb{E}_{\gamma}$ applied to the first term of (\ref{fffff}) vanishes; hence its contribution to the expectation of (\ref{131414141}) will vanish. By (\ref{thm_anisotropic_eq12}), with $1-N^{-D_1}$ probability, we have 
\begin{equation} \label{deq}
|S_{\nu \nu}-m(z)| \leq N^{-1/3+C\epsilon_0}.
\end{equation}
By the large deviation bound \cite[Lemma 3.6]{XYY}, with $1-N^{-D_1}$ probability, we have
\begin{equation} \label{largederivation}
\left|\sum_{p,q} S_{pq}^{(\nu \mu_1)} (Y_{\gamma-1}^*)_{\nu p} (Y_{\gamma-1})_{q\mu_1} \right| \leq N^{\epsilon_1} \frac{(\sum_{p,q} |S_{pq}^{(\nu \mu_1)}|^2)^{1/2}}{N}.
\end{equation}
By (\ref{thm_anisotropic_eq12})  and (\ref{largederivation}), with $1-N^{-D_1}$ probability, we have
\begin{equation}\label{ndeq}
\left|\sum_{p,q} S_{pq}^{(\nu \mu_1)} (Y_{\gamma-1}^*)_{\nu p} (Y_{\gamma-1})_{q\mu_1} \right| \leq N^{-1/3+C\epsilon_0}.
\end{equation}
Therefore, inserting (\ref{deq}) and (\ref{ndeq}) into (\ref{131414141}), by (\ref{thm_anisotropic_eq12}), we have
\begin{equation*}
|\mathbb{E}S^{(\mu_1)}_{\mu k}\overline{S_{\nu \mu_1} S^{(\mu_1)}_{i k}} (B^S)^{(\mu_1)} q((y^S)^{(\mu_1)})| \leq N^{-4/3+C \epsilon_0}.
\end{equation*}
Combine with (\ref{tri1}),  we conclude our proof.
\end{proof}
It is clear that our proof can be extended to the left singular vectors. For the proof of Theorem \ref{thm_edge}, the only difference is to use mean value theorem in $\mathbb{R}^2$ whenever it is needed. Moreover,  for the proof of Theorem \ref{thm_edge_sev}, we need to use $n$ intervals defined by
$$ I_i:=[a_{2k_i-1}-N^{-2/3+\epsilon}, a_{2k_i-1}+N^{-2/3+\epsilon}], \  i=1,2,\cdots,n. $$

\subsection{Extension to singular values}

In this section,  we will discuss how the arguments of Section \ref{greenfunction_comp_edge} can be applied to  the general function $\theta$ defined in (\ref{defn_theta3d}) containing singular values.  We mainly focus on discussing the proofs of Corollary \ref{corollaryedge}.

 On one hand, similar to Lemma \ref{truncation}, we can write the singular values in terms of an integral of smooth functions of Green functions. Using the comparison argument with $\theta \in \mathbb{R}^3$ and mean value theorem in $\mathbb{R}^3$, we can conclude our proof.  Similar discussions and results have been derived in \cite[Corollary 6.2 and Theorem 6.3]{EYY}. For completeness of the paper,  we basically follow the strategy of \cite[Section 4]{KY} to prove Corollary \ref{corollaryedge}.  The basic idea is to write the function $\theta$ in terms of Green functions by using integration by parts. We mainly look at the right edge of the $k$-th bulk component.
\begin{proof}[Proof of Corollary \ref{corollaryedge}]  Denote $F^V$ be the law of $\lambda_{\alpha^{\prime}}$,  consider a smooth function $\theta: \mathbb{R} \rightarrow \mathbb{R},$  for  $\delta$ defined in Lemma \ref{step1}, when $l \leq N_k^{\delta}$,  by (\ref{edgeeigenvalue}) and (\ref{thm_rigidity_equ}), it is easy to check that
\begin{equation} \label{inter0}
\mathbb{E}^V \theta(\frac{N^{2/3}}{\varpi}(\lambda_{\alpha^{\prime}}-a_{2k-1}))=\int_I \theta(\frac{N^{2/3}}{\varpi}(E-a_{2k-1}))dF^V(E)+O(N^{-D_1}), 
\end{equation}
where $\varpi:=\varpi_{2k-1}$ and $I$ is defined in (\ref{defn_i_1}). Using integration by parts on (\ref{inter0}), we have
\begin{equation} \label{inter1}
[\mathbb{E}^V-\mathbb{E}^G] \theta(\frac{N^{2/3}}{\varpi}(\lambda_{\alpha^{\prime}}-a_{2k-1})) =-[\mathbb{E}^V-\mathbb{E}^G] \int_I \frac{N^{2/3}}{\varpi} \theta^{\prime}(\frac{N^{2/3}}{\varpi}(E-a_{2k-1})) \mathbf{1}(\lambda_{\alpha^{\prime}}\leq E)dE+O(N^{-D_1}),
\end{equation}
where we use (\ref{edgeeigenvalue}) and  (\ref{thm_rigidity_equ}). Similar to  (\ref{defn_q}),  recall (\ref{defnalphaprime1}),  choose a smooth nonincreasing function $f_{l}$ that vanishes on the interval $[l+2/3, \infty)$ and is equal to 1 on the interval $(-\infty, l+1/3].$  Recall that $E_U=a_{2k-1}+2N^{-2/3+\epsilon}$  and $\mathcal{N}(E, E_U)$ denotes the number of eigenvalues of $Q_1$ locate in the interval $[E, E_U],$ by (\ref{inter1}),  we have
\begin{align*}
[\mathbb{E}^V-\mathbb{E}^G]\theta(\frac{N^{2/3}}{\varpi}(\lambda_{\alpha^{\prime}}-a_{2k-1}))=-[\mathbb{E}^V-\mathbb{E}^G] \int_I \frac{N^{2/3}}{\varpi} \theta^{\prime}(\frac{N^{2/3}}{\varpi}(E-a_{2k-1})) f_{l}(\mathcal{N}(E, E_U))dE+O(N^{-D_1}).
\end{align*}
Recall $\tilde{\eta}=N^{-2/3-9\epsilon_0},$ similar to the discussion of  (\ref{lem_deltaestimate}),  with $1-N^{-D_1}$ probability, we have 
\begin{equation*}
N^{2/3} \int_I  \left |  \operatorname{Tr} (\mathbf{1}_{[E, E_U]}*\vartheta_{\tilde{\eta}}(Q_1)))-\operatorname{Tr} (\mathbf{1}_{[E, E_U]}(Q_1)) \right | dE \leq N^{-\epsilon_0}.
\end{equation*}
This yields that 
\begin{equation*}
[\mathbb{E}^V-\mathbb{E}^G]\theta(\frac{N^{2/3}}{\varpi}(\lambda_{\alpha^{\prime}}-a_{2k-1}))=-[\mathbb{E}^V-\mathbb{E}^G] \int_I \frac{N^{2/3}}{\varpi} \theta^{\prime}(\frac{N^{2/3}}{\varpi}(E-a_{2k-1})) f_{l}(\operatorname{Tr}(\mathbf{1}_{[E,E_U]}*\vartheta_{\tilde{\eta}}(Q_1)))dE+O(N^{-D_1}).
\end{equation*}
\noindent By integration by parts, we have  
\begin{equation*} \label{inter2}
[\mathbb{E}^V-\mathbb{E}^G] \theta (\frac{N^{2/3}}{\varpi}(\lambda_{\alpha^{\prime}}-a_{2k-1}))=\frac{N}{\pi}[\mathbb{E}^V-\mathbb{E}^G]   \int_I \theta (\frac{N^{2/3}}{\varpi}(\lambda_{\alpha^{\prime}}-a_{2k-1})) f_{l}^{\prime}(\operatorname{Tr}(\mathbf{1}_{[E, E_U]}*\vartheta_{\tilde{\eta}}(Q_1))) \operatorname{Im}m_2(E+i\tilde{\eta})dE+o(1),
\end{equation*}
where we use (\ref{keyformula}).  Now we extend $\theta$ to the general case defined in (\ref{defn_theta3d}).  By Theorem \ref{thm_edge}, it is easy to check that
\begin{align} \label{finalproofexpression}
& [\mathbb{E}^V-\mathbb{E}^G]\theta(\frac{N^{2/3}}{\varpi}(\lambda_{\alpha^{\prime}}-a_{2k-1}),  N \xi_{\alpha^{\prime}}(i) \xi_{\alpha^{\prime}}(j), N \zeta_{\alpha^{\prime}}(\mu) \zeta_{\alpha^{\prime}}(\nu)) \nonumber \\
= &  \frac{1}{\pi}[\mathbb{E}^V-\mathbb{E}^G] \int_I \theta (\frac{N^{2/3}}{\varpi}(\lambda_{\alpha^{\prime}}-a_{2k-1}), \phi_{\alpha^{\prime}}, \varphi_{\alpha^{\prime}}) f_{l}^{\prime}(\operatorname{Tr}(\mathbf{1}_{[E, E_U]}*\theta_{\tilde{\eta}}(Q_1)))N \operatorname{Im}m_2(E+i\tilde{\eta})dE+o(1), 
\end{align}
where we introduce the shorthand notations
\begin{equation*}
\phi_{\alpha^{\prime}}=\frac{N}{\pi} \int_I \tilde{G}_{ij}(\tilde{E}+i\eta)q_1 [\operatorname{Tr}( \mathbf{1}_{[\tilde{E}^{-}, E_U]}* \vartheta_{\tilde{\eta}}(Q_1))] d\tilde{E}, 
\end{equation*}
\begin{equation*}
\varphi_{\alpha^{\prime}}=\frac{N}{\pi} \int_I \tilde{G}_{\mu \nu}(\tilde{E}+i\eta)q_2 [\operatorname{Tr}( \mathbf{1}_{[\tilde{E}^{-}, E_U]}* \vartheta_{\tilde{\eta}}(Q_1))] d\tilde{E}.
\end{equation*}
and $q_1, \ q_2$ are functions defined in (\ref{defn_q}).  Therefore, the randomness on the right-hand side of (\ref{finalproofexpression}) is expressed in terms of Green functions. Hence, we can apply the Green function comparison argument to (\ref{finalproofexpression}) as in Section \ref{greenfunction_comp_edge}.  The complications are notational and we will not reproduce the detail here.
\end{proof}
Finally, the proofs of Corollary \ref{coroedgesev} are very similar to that of Corollary \ref{corollaryedge} except we will  use $n$ different intervals and a  multidimensional integral. We will not reproduce the detail here.

\section{Singular vectors in the bulks} \label{bulk}

In this section, we will prove the bulk universality Theorem \ref{thm_bulk} and \ref{thm_bulksev}.  Our key ingredients Lemma \ref{lem_anisotropic}, \ref{de_local}  and Corollary \ref{them_specific_bounds}  are proved for $N^{-1+\tau} \leq \eta \leq \tau^{-1}$ (recall (\ref{DOMAIN1})). In the bulks, recall Lemma \ref{ridgitybulk}, the eigenvalue spacing is of order $N^{-1}$. The following lemma extends the above controls for a small spectral scale all the way down to the real axis. The proof relies on Corollary \ref{them_specific_bounds} and  the detail can be found in \cite[Lemma 5.1]{KY}.
 
\begin{lem} \label{anisotropicsmall} Recall (\ref{domainbulk}), for $z \in \mathbf{D}_k^b \ \text{with} \ 0<\eta \leq \tau^{-1},$ when $N$ is large enough, with $1-N^{-D_1}$ probability,  we have  
\begin{equation} \label{bulkbound}
\max_{\mu, \nu} |G_{\mu \nu}-\delta_{\mu \nu} m(z)\vert \leq N^{\epsilon_1} \Psi(z).
\end{equation}
\end{lem}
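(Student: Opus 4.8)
The plan is to extend the anisotropic bound from an intermediate reference scale $\eta_0:=N^{-1+\epsilon_1/2}$ all the way down to the real axis by integrating the resolvent identity $\partial_z\mathcal G_2=\mathcal G_2^2$, using a Ward identity together with the monotonicity of $\eta\mapsto\eta\operatorname{Im}(\mathcal G_2)_{\mu\mu}(E+i\eta)$. First I would record two facts about the bulk domain: by the square-root behaviour (\ref{squareroot}) and the lower bound on the density in (ii) of Assumption~\ref{defnregularity}, $\operatorname{Im}m(E+i\eta)\sim1$ uniformly for $E\in[a_{2k}+\tau',a_{2k-1}-\tau']$ and $0<\eta\le\tau^{-1}$, so $\Psi(E+i\eta)\sim(N\eta)^{-1/2}+(N\eta)^{-1}$ is decreasing in $\eta$ with $\Psi(E+i\eta_0)\sim N^{-\epsilon_1/4}$, and $|m|$ is bounded on this domain. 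Since the model satisfies the regularity hypotheses of Lemma~\ref{lem_anisotropic} with every parameter $\le\tau$, applying Corollary~\ref{them_specific_bounds} with parameter $\epsilon_1/2$ gives, with probability $1-N^{-D_1}$, the bound $|(\mathcal G_2)_{\mu\nu}(z)-\delta_{\mu\nu}m(z)|\le N^{\epsilon_1/2}\Psi(z)$ for $z=E+i\eta$ in the bulk with $\eta\in[\eta_0,\tau^{-1}]$; via $G_{\mu\nu}=(\mathcal G_2)_{\mu\nu}$ from (\ref{green2}) and the polarization identity this is (\ref{bulkbound}) in that range, and in particular $\operatorname{Im}(\mathcal G_2)_{\mu\mu}(E+i\eta_0)\le\operatorname{Im}m(E+i\eta_0)+N^{\epsilon_1/2}\Psi(E+i\eta_0)\le 2N^{\epsilon_1/4}$.

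For $0<\eta<\eta_0$ I would integrate $\partial_s(\mathcal G_2)_{st}(E+is)=i(\mathcal G_2^2)_{st}(E+is)$ from $\eta$ to $\eta_0$. From the spectral representation (\ref{main_representation}) and orthonormality of the $\zeta_k$ one has the Ward identity $\sum_{\nu\in\mathcal I_2}|(\mathcal G_2)_{\mu\nu}(E+is)|^2=s^{-1}\operatorname{Im}(\mathcal G_2)_{\mu\mu}(E+is)$, hence by Cauchy--Schwarz $|(\mathcal G_2^2)_{st}(E+is)|\le s^{-1}\big(\operatorname{Im}(\mathcal G_2)_{ss}(E+is)\,\operatorname{Im}(\mathcal G_2)_{tt}(E+is)\big)^{1/2}$ for $s,t\in\mathcal I_2$; and from the monotonicity of $s\mapsto s\operatorname{Im}(\mathcal G_2)_{\mu\mu}(E+is)=\sum_k s^2|\zeta_k(\mu)|^2[(\lambda_k-E)^2+s^2]^{-1}$ one gets $\operatorname{Im}(\mathcal G_2)_{\mu\mu}(E+is)\le(\eta_0/s)\operatorname{Im}(\mathcal G_2)_{\mu\mu}(E+i\eta_0)\le 2N^{\epsilon_1/4}\eta_0\,s^{-1}$ for $s\le\eta_0$. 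Therefore $|(\mathcal G_2^2)_{st}(E+is)|\le 2N^{\epsilon_1/4}\eta_0\,s^{-2}$, so $|(\mathcal G_2)_{st}(E+i\eta)-(\mathcal G_2)_{st}(E+i\eta_0)|\le 2N^{\epsilon_1/4}\eta_0\int_\eta^{\eta_0}s^{-2}\,ds\le 2N^{\epsilon_1/4}\eta_0/\eta\le N^{\epsilon_1}(N\eta)^{-1}\le N^{\epsilon_1}\Psi(E+i\eta)$ for $N$ large. Combining this with the bound at $\eta_0$ (which is $\le N^{\epsilon_1/2}\Psi(E+i\eta_0)\le N^{\epsilon_1}\Psi(E+i\eta)$ since $\Psi$ is decreasing) and with $|m(E+i\eta)-m(E+i\eta_0)|=O(1)\le N^{\epsilon_1}\Psi(E+i\eta)$ (using $\Psi(E+i\eta)\ge(N\eta_0)^{-1}=N^{-\epsilon_1/2}$) yields (\ref{bulkbound}) for all $0<\eta<\eta_0$ as well, up to a harmless relabelling of $\epsilon_1$ to absorb the constants.

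The step that needs care is the off-diagonal case: the bound $|(\mathcal G_2)_{\mu\nu}(E+i\eta)|\le\eta^{-1/2}\big(\operatorname{Im}(\mathcal G_2)_{\mu\mu}\,\operatorname{Im}(\mathcal G_2)_{\nu\nu}\big)^{1/4}$ obtained from Cauchy--Schwarz alone, fed only by the monotonicity bound on the diagonal, overshoots the target by a polynomial factor, so one genuinely has to propagate the off-diagonal entries along the flow $\partial_z\mathcal G_2=\mathcal G_2^2$ rather than estimate them directly. The flow argument closes precisely because the admissible error $N^{\epsilon_1}\Psi(E+i\eta)\ge N^{\epsilon_1}(N\eta)^{-1}$ blows up as $\eta\downarrow0$ — enough to absorb both the $\eta^{-1}$ generated by $\int_\eta^{\eta_0}s^{-2}\,ds$ and the $O(1)$ coming from $m$ — while $\operatorname{Im}m\sim1$, a consequence of the bulk regularity, is what keeps the diagonal of $\mathcal G_2$ bounded at the reference scale $\eta_0$. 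This is the argument of \cite[Lemma~5.1]{KY}, which I would follow, the only new bookkeeping being the use of the $\mathcal G_2$-block in place of Wigner resolvents.
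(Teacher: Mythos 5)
Your argument is correct and is essentially the paper's own route: the paper simply invokes Corollary~\ref{them_specific_bounds} and defers the details to \cite[Lemma 5.1]{KY}, whose proof is exactly your scheme of establishing the bound at an intermediate scale $\eta_0=N^{-1+\epsilon_1/2}$ via the anisotropic law and then pushing it down to arbitrarily small $\eta$ through the monotonicity of $s\mapsto s\operatorname{Im}(\mathcal G_2)_{\mu\mu}(E+is)$, the Ward identity, and integration of $\partial_s\mathcal G_2=i\mathcal G_2^2$. The only points left implicit (choosing the domain parameter in Lemma~\ref{lem_anisotropic} small enough that $\eta_0$ lies in $\mathbf{D}(\tau)$, and the boundedness of $m$ up to the real axis inside a regular bulk, which justifies $|m(E+i\eta)-m(E+i\eta_0)|=O(1)$) are routine and of the same nature as the paper's own bookkeeping.
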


Once Lemma \ref{anisotropicsmall} is established, Lemma \ref{ridgitybulk} and \ref{de_local} will follow. Next we follow the basic proof strategy for Theorem \ref{thm_edge} but use different spectral window size.  Again, we will only provide the proof for the following Lemma \ref{lem_onedimensionalcasebulk}, which establishes the universality for the distribution of $\zeta_{\alpha^{\prime}}(\mu) \zeta_{\alpha^{\prime}}(\nu)$ in detail.  To the end of this section, we always use the scale parameter
\begin{equation} \label{bulketa}
\eta =N ^{-1-\epsilon_0}, \ \epsilon_0>\epsilon_1 \ \text{is a small constant}.
\end{equation}
Therefore,  the following bounds hold with $1-N^{-D_1}$ probability 
\begin{equation} \label{bulkboundsall}
\max_{\mu} |G_{\mu \mu}(z)|\leq N^{2 \epsilon_0}, \ \max_{\mu \neq \nu} |G_{\mu \nu}(z)|\leq N^{2\epsilon_0}, \  \max_{\mu, s} \vert \zeta_{\mu}(s) \vert^2 \leq N^{-1+\epsilon_0}.
\end{equation}
The following lemma states the bulk universality for  $\zeta_{\alpha^{\prime}}(\mu) \zeta_{\alpha^{\prime}}(\nu)$.
\begin{lem}\label{lem_onedimensionalcasebulk} For $Q_V=\Sigma^{1/2}X_VX_V^* \Sigma^{1/2}$ satisfying Assumption \ref{assu_main},  assuming that the third and fourth moments of $X_V$ agree with those of $X_G$ and  considering the $k$-th bulk component, $k=1,2,\cdots,p$ and  $l$ defined in (\ref{defnalphaprime1}) or (\ref{defnalphaprime2}) , under Assumption \ref{defnregularity} and \ref{assum_nozero}, for  any choices of indices $\mu, \nu \in \mathcal{I}_2$, there exists a small $\delta \in (0,1),$ when $\delta N_k \leq l \leq (1-\delta)N_k,$  we have
\begin{equation*}
\lim_{N \rightarrow \infty} [\mathbb{E}^V-\mathbb{E}^G] \theta( N \zeta_{\alpha^{\prime}}(\mu) \zeta_{\alpha^{\prime}}(\nu))=0,
\end{equation*}
where $ \theta$ is a smooth function in $\mathbb{R}$ that satisfies
\begin{equation} \label{defn_theta1bd}
\vert \theta^{(5)}(x) \vert \leq C_1 (1+ \vert x \vert)^{C_1},  \ \text{with some constant} \ C_1>0.  
\end{equation}
\end{lem}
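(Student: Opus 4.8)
The plan is to follow the same three-step strategy used for the edge case (Lemma~\ref{lem_onedimensionalcase}), namely: (i) express $N\zeta_{\alpha'}(\mu)\zeta_{\alpha'}(\nu)$ as an integral of $\tilde G_{\mu\nu}$ over a random interval; (ii) replace the sharp characteristic function by a smooth function $q$ of a Green-function trace; and (iii) run the Green function comparison argument, this time matching four moments. The essential change is the spectral scale: near the bulk, by Lemma~\ref{ridgitybulk}, eigenvalue spacing and the rigidity error are of order $N^{-1}$, so instead of $\eta=N^{-2/3-\epsilon_0}$ I would take $\eta=N^{-1-\epsilon_0}$ as in \eqref{bulketa}, and the random interval $I$ will have length $O(N^{-1+\epsilon})$ centered at the classical location $\gamma_{\alpha'}$ rather than at a bulk edge. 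The key controls needed for all estimates — $|G_{\mu\nu}(z)|\le N^{2\epsilon_0}$, $|G_{\mu\mu}(z)|\le N^{2\epsilon_0}$, and complete delocalization $\max_{\mu,s}|\zeta_\mu(s)|^2\le N^{-1+\epsilon_0}$ — are provided by Lemma~\ref{anisotropicsmall}, \eqref{bulkboundsall}, and Lemma~\ref{de_local}.

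First I would carry out the analogue of Lemma~\ref{step1}: writing $\zeta_{\alpha'}(\mu)\zeta_{\alpha'}(\nu)=\frac{\eta}{\pi}\int_{\mathbb R}\frac{\zeta_{\alpha'}(\mu)\zeta_{\alpha'}(\nu)}{(E-\lambda_{\alpha'})^2+\eta^2}dE$, truncating to a random interval $[a,b]$ with endpoints at distance $N^\epsilon\eta$ from $\lambda_{\alpha'}$ and $\lambda_{\alpha'+1}$ using \eqref{anestimationequ}, and then showing that the contribution of the off-diagonal terms $\sum_{\beta\ne\alpha'}$ in the spectral decomposition \eqref{keykeykeyindentity} is negligible. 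Here the bulk rigidity \eqref{lem_rigity_eq} and the lower bound on the density from part (ii) of Assumption~\ref{defnregularity} replace the square-root edge behaviour \eqref{squarerootequation}: since the density is bounded below by a constant $c$ on $[a_{2k}+\tau',a_{2k-1}-\tau']$, we get $|E-\lambda_\beta|\gtrsim |l(\beta)-l|/N$ rather than the $(l(\beta)/N)^{4/3}$ bound of \eqref{Ebound}, so the relevant sum $\sum_{\beta:\,|l(\beta)-l|\ge N^{\delta_1}}\int_I\frac{\eta}{\eta^2+(E-\lambda_\beta)^2}dE$ is controlled by $N^{\epsilon_1-\epsilon_0+\epsilon}\sum_{j\ge N^{\delta_1}}j^{-2}\lesssim N^{-\delta_1+\epsilon_1-\epsilon_0+\epsilon}$. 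The restriction $\delta N_k\le l\le(1-\delta)N_k$ ensures we are genuinely in the bulk so this two-sided decay is available. Then, following Lemma~\ref{truncation}, I would replace the sharp indicator $\mathbf 1(\mathcal N(E^-,E_U)=l)$ — now with $E_U$ taken near $\gamma_{\alpha'}$ or just a convenient bulk reference point — by $q[\mathrm{Tr}(\mathcal X_E*\vartheta_{\tilde\eta})(Q_1)]$ with $\tilde\eta$ a slightly larger scale (e.g. $N^{-1-c\epsilon_0}$), using the analogue of Lemma~\ref{lem_deltaestimatetileeta}; the bound $\sup_{E\in I}|\tilde G_{\mu\nu}(z)|\le N^{-1+C\epsilon_0}$ follows from \eqref{bulkboundsall} and $\mathrm{Im}\,m(z)=O(1)$ in the bulk rather than from \eqref{squareroot}.

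The third and main step is the Green function comparison argument, and this is where the four-moment matching is genuinely needed and where I expect the main difficulty. As in Section~\ref{greenfunction_comp_edge} I would reduce to showing $[\mathbb E^V-\mathbb E^G]\theta[\int_I x(E)q(y(E))dE]=o(1)$ with $x,y$ as in \eqref{reduction}, run the Lindeberg telescoping over the $MN$ entries, and expand $S=R-RVR+\cdots$ via \eqref{resolvent}, \eqref{combinations}, \eqref{combinations1}. The crucial difference from the edge: at the edge scale $\eta\sim N^{-2/3}$ a single entry replacement produces an error $o(N^{-2+t})$ already after matching \emph{two} moments, because $\Psi(z)\sim N^{-1/3}$ so each resolvent off-diagonal entry is $O(N^{-1/3+\epsilon_0})$; at the bulk scale $\eta\sim N^{-1}$ we have $\Psi(z)\sim N^{-\epsilon_0/2}$-type bounds only (the off-diagonal $G_{\mu\nu}$ are merely $O(N^{2\epsilon_0})$, cf.\ \eqref{bulkboundsall}), so the naive per-entry error is only $O(N^{-2})$ \emph{after the third-moment term is killed by matching}, and one must additionally match the fourth moment so that the $m=4$ term in the resolvent expansion contributes an $A$-type deterministic quantity rather than a genuine discrepancy. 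Concretely, I would carry out the bulk analogues of Lemmas~\ref{lem_random_bound} and \ref{keykeykeykey}: decompose $\Delta X_{\mu\nu,k}$ by the number of copies of $X^G_{i\mu_1}$ it contains, observe that the pieces with $\le 4$ copies are absorbed into a random variable $A$ depending only on $O$ and the first four moments, and show the remaining $m\ge 5$ terms are $o(N^{-2+t})$ uniformly, using the large-deviation bound \cite[Lemma 3.6]{XYY}, the resolvent identities \eqref{resolvent1}, \eqref{resolvent2}, and the bulk local law \eqref{bulkbound} to extract the extra smallness (the key being that a diagonal factor $S_{\nu\mu_1}$ with $\nu\ne\mu_1$ carries a factor $m(z)$ plus a fluctuation of size $N^{-1+C\epsilon_0}$, and the off-diagonal large-deviation terms are similarly small). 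The hardest part is bookkeeping: tracking which resolvent factors can be diagonal versus off-diagonal so that each term with $\ge 5$ factors of $X^G_{i\mu_1}$ genuinely beats the $O(N^{-2+t})\cdot O(N^2)$ accounting, and verifying that the $m=3,4$ terms, after taking $\mathbb E_\gamma$ and using four-moment matching, cancel between the $S$ and $T$ chains. Once this is done, summing the $O(N)$ terms with $t=1$ and $O(N^2)$ terms with $t=0$ gives $[\mathbb E^V-\mathbb E^G]\theta(N\zeta_{\alpha'}(\mu)\zeta_{\alpha'}(\nu))=o(1)$, completing the proof; the extension to $\theta\in\mathbb R^2$ (Theorem~\ref{thm_bulk}) and to several bulks (Theorem~\ref{thm_bulksev}) is then routine by using mean value theorem in $\mathbb R^{2n}$ and $n$ disjoint intervals, exactly as at the end of Section~\ref{greenfunction_comp_edge}.
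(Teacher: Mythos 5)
Your steps (i) and (iii) track the paper closely: the paper's Lemma \ref{step1bulk} is exactly your first step (scale $\eta=N^{-1-\epsilon_0}$ as in \eqref{bulketa}, interval \eqref{defn_i_1bulk} of length $N^{-1+\epsilon}$ around $\gamma_{\alpha'}$, bulk rigidity \eqref{lem_rigity_eq} replacing the edge estimate \eqref{Ebound}), and the paper's comparison Lemma \ref{comparisonbulk} uses precisely the fifth-order expansion \eqref{taylorexpansionbulk}, the decomposition by the number of copies of $X^G_{i\mu_1}$ with the $p\le 4$ pieces absorbed into a four-moment quantity, and the $o(N^{-2})$ per-swap accounting you describe. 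The genuine gap is in your step (ii) and its knock-on effect on the treatment of $y(E)$ in step (iii). First, you cannot take $E_U$ "near $\gamma_{\alpha'}$ or a convenient bulk reference point": the characterization $\mathcal X(E)=\mathbf 1(\mathcal N(E^-,E_U)=l)$ only identifies $\lambda_{\alpha'}\ge E^-$ if the number of eigenvalues above $E_U$ is deterministic with overwhelming probability, i.e. $E_U$ must lie in a spectral gap above $a_{2k-1}$ (the paper keeps $E_U=a_{2k-1}+2N^{-2/3+\epsilon}$ from \eqref{keywindowlength}); for $E_U$ inside the bulk the count above it fluctuates, since rigidity only locates eigenvalues to precision $N^{-1+\epsilon_1}$, comparable to the spacing. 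Consequently the window $[E^-,E_U]$ necessarily has length of order one and contains $\sim l\sim N$ eigenvalues, and this is exactly why the paper states that the delta-approximation of Lemma \ref{lem_deltaestimatetileeta} cannot be reused and switches to the Helffer--Sj{\"o}strand representation \eqref{functinoalcalculus}, \eqref{bulkdecompose1}.

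Second, even granting a smoothing-error bound, your version of $y(E)$ would be $\frac{N}{\pi}\int_{E^-}^{E_U}\operatorname{Im}m_2(w+i\tilde\eta)\,dw$ with $\tilde\eta\ll N^{-1}$, and the comparison step then breaks down quantitatively: at the sub-microscopic scale $\tilde\eta$ the only available control on resolvent entries is $O(N^{C\epsilon_0})$ (from Lemma \ref{anisotropicsmall} with $\Psi\sim (N\tilde\eta)^{-1}$), so the fifth-order change under one entry swap obeys only $|\Delta m_2^{(5)}(w+i\tilde\eta)|\le N^{-5/2+C\epsilon_0}$, and integrating $w$ over an order-one interval gives $|\Delta y^{(5)}|\lesssim N\cdot N^{-5/2+C\epsilon_0}=N^{-3/2+C\epsilon_0}$, which after summing over the $\sim N^2$ Lindeberg swaps does not vanish. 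The paper's cure is structural, not just bookkeeping: writing $\operatorname{Tr}f_E(Q_1)$ through Helffer--Sj{\"o}strand with the smoothed indicator $f_{E^-,E_U,\eta_d}$ of \eqref{defn_ed}, splitting into \eqref{defn_y}--\eqref{defn_tildey}, pushing the $\chi'$ contributions to $|\sigma|\sim 1$ where \eqref{smallbulkedgebound} gives $N^{-1/2}$ off-diagonal bounds, and integrating the $f_E''$ term by parts as in \eqref{ymintergate} so that the small-$\sigma$ resolvent evaluations are localized near the two endpoints (total mass of $f_E'$ is $O(1)$) rather than spread over the whole order-one window; this is what yields $|\Delta y^{(5)}|=O(N^{-5/2+C\epsilon_0})$ as in \eqref{bulkdeltayde}. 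Without this (or an equivalent device) your proposed step (iii) does not close.
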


\begin{proof}
The proof strategy  is very similar to that of Lemma \ref{lem_onedimensionalcase}. Our first step is an analogue of Lemma \ref{step1}. The proof is quite similar (actually easier as the window size is much smaller). We omit further detail.
\begin{lem} \label{step1bulk} Under the assumptions of Lemma \ref{lem_onedimensionalcasebulk},  there exists a $0<\delta<1$,  we have
\begin{equation} \label{lem_step1_eq}
\lim_{N \rightarrow \infty} \max_{\delta N_k \leq l \leq (1-\delta)N_k } \max_{\mu, \nu} \left \vert \mathbb{E}^V \theta(N \zeta_{\alpha^{\prime}}(\mu)\zeta_{\alpha^{\prime}}(\nu))-\mathbb{E}^V \theta[\frac{N}{\pi}\int_{I} \tilde{G}_{\mu \nu}(z) \mathcal{X}(E) dE] \right\vert =0,
\end{equation}
where $\mathcal{X}(E)$ is defined in (\ref{defn_x(e)}) and for $\epsilon$ satisfying (\ref{defn_epsion}), $I$ is denoted as 
\begin{equation}\label{defn_i_1bulk}
I:=[\gamma_{\alpha^{\prime}}-N^{-1+\epsilon}, \ \gamma_{\alpha^{\prime}}+N^{-1+\epsilon}]. 
\end{equation}
\end{lem}

Next we will express the indicator function in (\ref{lem_step1_eq}) using Green functions. Recall (\ref{keywindowlength}),  a key observation  is that the size of $[E^{-}, E_U]$ is of order $N^{-2/3}$ due to  (\ref{edgeeta}). As we now use (\ref{bulketa}) and (\ref{defn_i_1bulk}) in the bulks, the size here is of order 1. So we cannot use the delta approximation function to estimate $\mathcal{X}(E)$. Instead, we will use Helffer-Sj{\" o}strand functional calculus. This has been used many times when the window size $\eta$ takes the form of (\ref{bulketa}), for example in the proofs of rigidity of eigenvalues in \cite{DY, EYY,PY1}.

For any $0< E_1, \ E_2\leq \tau^{-1}$, denote 
$f(\lambda)\equiv f_{E_1, E_2,\eta_d}(\lambda)$ be the characteristic function of $[E_1, E_2]$ smoothed on the scale 
\begin{equation}\label{defn_ed}
\eta_d:=N^{-1-d\epsilon_0},  \ d >2,
\end{equation}
where $f=1,$ when $\lambda \in [E_1, E_2]$ and $f=0$ when $\lambda \in \mathbb{R} \backslash [E_1-\eta_d, E_2+\eta_d ]$, and 
\begin{equation} \label{functionalcalculusbound}
|f^{\prime}| \leq C \eta_d^{-1}, \ |f^{\prime \prime}| \leq C \eta_d^{-2},
\end{equation}
for some constant $C>0.$ By (B.12) of \cite{ERSY}, denote $f_E \equiv f_{E^{-}, E_U,  \eta_d},$  we have 
\begin{equation} \label{functinoalcalculus}
f_{E}(\lambda)=\frac{1}{2\pi} \int_{\mathbb{R}^2} \frac{i\sigma f^{\prime \prime}_E(e)\chi(\sigma)+if_E(e)\chi^{\prime}({\sigma})-\sigma f^{\prime}_E(e) \chi^{\prime}(\sigma)}{\lambda-e-i\sigma}de d\sigma,
\end{equation} 
where $\chi(y)$ is a smooth cutoff function with support  $[-1,1]$ and $\chi(y)=1$ for $|y|\leq \frac{1}{2}$ with bounded derivatives. Using a similar argument to Lemma \ref{truncation},  we have the following result. 
\begin{lem} \label{truncationbulk} Recall the smooth cutoff function  $q$ defined in (\ref{defn_q}),  under the assumptions of Lemma \ref{step1bulk}, there exists a $0<\delta<1,$ such that
\begin{equation} \label{lem_step1_eqbulk}
\lim_{N \rightarrow \infty} \max_{\delta N_k \leq l \leq (1-\delta)N_k }  \max_{\mu, \nu} \left \vert \mathbb{E}^V \theta[\frac{N}{\pi}\int_{I} \tilde{G}_{\mu \nu}(z) \mathcal{X}(E)] dE-\mathbb{E}^V \theta[\frac{N}{\pi} \int_{I} \tilde{G}_{\mu \nu}(z)q(\operatorname{Tr}f_E(Q_1))]dE \right\vert =0.
\end{equation}
\end{lem}
\begin{proof}
It is easy to check that with $1-N^{-D_1}$ probability,  (\ref{part_estimate}) still holds true. Therefore, it remains to prove the following result
\begin{equation}\label{reduceproof}
\mathbb{E}^V \theta [\frac{N}{\pi} \int_I \tilde{G}_{\mu \nu}(E+i\eta)q(\operatorname{Tr}\mathcal{X}_E (Q_1))]-\mathbb{E}^V\theta[\frac{N}{\pi} \int_I \tilde{G}_{\mu \nu}(E+i\eta)q(\operatorname{Tr}f_E(Q_1))dE]=o(1).
\end{equation}
We first observe that  for  any $x \in \mathbb{R},$ we have
\begin{equation*}
| \mathcal{X}_E(x)-f_E(x) |=
\begin{cases}
0, & x \in [E^{-},E_U] \cup (-\infty, E^- -\eta_d) \cup (E_U+\eta_d, +\infty); \\
|f_E(x)|, & x \in [E^{-}-\eta_d, E^{-}) \cup (E_U, E_U+\eta_d].
\end{cases}
\end{equation*}
Therefore, we have
\begin{equation*}
| \operatorname{Tr}\mathcal{X}_E(Q_1)-\operatorname{Tr} f_E(Q_1)| \leq \max_{x} |f_E(x)| \left ( \mathcal{N}(E^-- \eta_d, E^-)+\mathcal{N}(E_U, E_U+\eta_d) \right).
\end{equation*}
By Lemma \ref{ridgitybulk}, the definition of $\eta_d$ and a similar argument to (\ref{truncation_estimate_eq}), we  can finish the proof of (\ref{reduceproof}). 
\end{proof}
\noindent Finally, we apply the Green function comparison argument, where we will follow the basic approach of Section \ref{greenfunction_comp_edge} and \cite[Section 5]{KY}. The key difference is that we will use (\ref{bulketa}) and (\ref{bulkboundsall}).
\begin{lem}\label{comparisonbulk} Under the assumptions of Lemma \ref{truncationbulk},  there exists a $0<\delta<1$,  we have
\begin{equation} \label{bulkcomparisoneq}
\lim_{N \rightarrow \infty} \max_{\delta N_k \leq l \leq (1-\delta)N_k }   \max_{\mu, \nu} [\mathbb{E}^V-\mathbb{E}^G]\theta[\frac{N}{\pi} \int_I \tilde{G}_{\mu \nu}(E+i\eta)q(\operatorname{Tr}f_E(Q_1))dE]=0.
\end{equation}
\end{lem}
\begin{proof}
Recall (\ref{functinoalcalculus}), by (\ref{EEEE}), we have
\begin{equation} \label{bulkdecompose1}
\operatorname{Tr} f_{E}(Q_1)=\frac{N}{2 \pi} \int_{\mathbb{R}^2}(i \sigma f^{\prime \prime}_E(e) \chi(\sigma)+if_E(e)\chi^{\prime}(\sigma)-\sigma f_{E}^{\prime}(e) \chi^{\prime}(\sigma))m_2(e+i\sigma)de d\sigma.
\end{equation}
Denote $\tilde{\eta}_d:=N^{-1-(d+1)\epsilon_0}$, we can decompose the right-hand side of (\ref{bulkdecompose1}) by
\begin{align*}
\operatorname{Tr} f_{E}(Q_1)= \frac{N}{2 \pi} \int \int_{\mathbb{R}^2}(if_E(e)\chi^{\prime}(\sigma)-\sigma f^{\prime}_E(e)\chi^{\prime}(\sigma))m_2(e+i\sigma)ded\sigma+ \frac{i N}{2 \pi} \int_{|\sigma|>\tilde{\eta}_d} \sigma\chi(\sigma)   \int f_E^{\prime \prime}(e)m_2(e+i\sigma) d\sigma de   \nonumber \\
+\frac{i N}{2 \pi} \int_{-\tilde{\eta}_d}^{\tilde{\eta}_d}\sigma\chi(\sigma)   \int f_E^{\prime \prime}(e)m_2(e+i\sigma) d\sigma  de  . 
\end{align*}
By (\ref{bulkboundsall}) and  (\ref{functionalcalculusbound}),  for some constant $C>0,$ with $1-N^{-D_1}$ probability, we have
\begin{equation} \label{bulkremainder1}
\left | \frac{i N}{2 \pi} \int_{-\tilde{\eta}_d}^{\tilde{\eta}_d}\sigma\chi(\sigma)   \int f_E^{\prime \prime}(e)m_2(e+i\sigma) d\sigma de  \right | \leq N^{-C\epsilon_0}.
\end{equation}
Recall (\ref{eigenvectorrepresentation}) and (\ref{reduction}), similar to Lemma \ref{dropdiagonal}, we firstly drop the diagonal terms. By (\ref{bulkbound}), with $1-N^{-D_1}$ probability,  we have (recall (\ref{keyformula0}))
\begin{equation*}
\int_I \left  | \frac{N}{\pi} \tilde{G}_{\mu \nu}(E+i\eta)-x(E) \right | dE \leq  N^{-1+C\epsilon_0},
\end{equation*}
for some constant $C>0.$ Hence, by mean value theorem, we only need to prove
\begin{equation}
\lim_{N \rightarrow \infty} \max_{\delta N_k \leq l \leq (1-\delta)N_k }   \max_{\mu, \nu}[\mathbb{E}^V-\mathbb{E}^G] \theta [\int_I x(E)q(\operatorname{Tr}f_E(Q_1))dE]=o(1).
\end{equation}
Furthermore, by Taylor expansion, (\ref{bulkremainder1}) and the definition of $\chi$, it suffices to prove
\begin{equation} \label{bulkfinalneedtoprove}
\lim_{N \rightarrow \infty} \max_{\delta N_k \leq l \leq (1-\delta)N_k }  \max_{\mu, \nu}[\mathbb{E}^V-\mathbb{E}^G] \theta [\int_I x(E)q(y(E)+\tilde{y}(E))dE]=o(1),
\end{equation}
where 
\begin{equation}\label{defn_y}
y(E):=\frac{N}{2\pi}\int_{\mathbb{R}^2} i \sigma f^{\prime \prime}_E(e)\chi(\sigma)m_2(e+i\sigma)\mathbf{1}(|\sigma| \geq \tilde{\eta}_d)ded\sigma,
\end{equation}
\begin{equation}\label{defn_tildey}
\tilde{y}(E):=\frac{N}{2\pi}\int_{\mathbb{R}^2} (i f_E(e)\chi^{\prime}(\sigma)-\sigma f^{\prime}_E(e)\chi^{\prime}(\sigma))m_2(e+i\sigma)ded\sigma.
\end{equation}
Next we will use the Green function comparison argument to prove (\ref{bulkfinalneedtoprove}). In the proof of Lemma \ref{greenfunction_comp}, we use the resolvent expansion till the order of 4. However, due to the larger bounds in (\ref{bulkboundsall}),   we will use the following expansion,
\begin{equation} \label{taylorexpansionbulk}
S=R-RVR+(RV)^2R-(RV)^3R+(RV)^4R-(RV)^5S.
\end{equation}
Recall (\ref{srt}) and (\ref{finalgreenfunction}), we have
\begin{equation} \label{reducedbulk1}
[\mathbb{E}^V-\mathbb{E}^G] \theta [\int_I x(E)q(y(E)+\tilde{y}(E))dE]=\sum_{\gamma=1}^{\gamma_{\max}}\mathbb{E}\left( \theta[(\int_I x^S q(y^S+\tilde{y}^S))]-\theta[(\int_I x^T q(y^T+\tilde{y}^T))] \right). 
\end{equation} 
We still use the same notation $\Delta x(E):=x^S(E)-x^R(E).$
We basically follow the approach of Section \ref{greenfunction_comp_edge}, where the control (\ref{edgebound}) is replaced by (\ref{bulkboundsall}). We firstly deal with $x(E).$ 
Denote $\Delta x^{(k)}(E)$ by the summations of the terms in $\Delta x(E)$ containing $k$ numbers of $X^G_{i \mu_1}$. Similar to the discussion of Lemma \ref{lem_random_bound}, recall (\ref{defn_deltax}), by (\ref{INTRODUCTIONEQ}) and (\ref{bulkboundsall}), with $1-N^{-D_1}$ probability, we have
\begin{equation*}
\vert \Delta x^{(5)}(E) \vert \leq N^{-3/2+C\epsilon_0}, \ M+1\leq k \neq  \mu, \nu \leq M+N. 
\end{equation*} 
This yields that
\begin{equation} \label{bulkxde}
\Delta x(E)=\sum_{p=1}^4 \Delta x^{(p)}(E)+O(N^{-3/2+C\epsilon_0}).
\end{equation}
Denote 
\begin{equation*}
\Delta \tilde{y}(E)=\tilde{y}^S(E)-\tilde{y}^R(E), \  \Delta m_2:=m_2^S-m_2^R=\frac{1}{N} \sum_{\mu=M+1}^{M+N}(S_{\mu \mu}-R_{\mu \mu}). 
\end{equation*}
We first deal with (\ref{defn_tildey}). By the definition of $\chi,$ we need to restrict 
$\frac{1}{2} \leq |\sigma| \leq 1$; hence, by (\ref{thm_anisotropic_eq12}), with $1-N^{-D_1}$ probability, we have 
\begin{equation} \label{smallbulkedgebound}
\max_{\mu} |G_{\mu \mu}| \leq N^{\epsilon_1}, \ \max_{\mu \neq \nu} |G_{\mu \nu}| \leq N^{-1/2+\epsilon_1}.
\end{equation}
By (\ref{combinations}), (\ref{combinations1}), (\ref{taylorexpansionbulk}) and (\ref{smallbulkedgebound}), with $1-N^{-D_1}$ probability, we have $|\Delta m_2^{(5)}| \leq N^{-7/2+9\epsilon_1}$.  This yields the following decomposition
\begin{equation}\label{bulkdelyde}
\Delta \tilde{y}(E)=\sum_{p=1}^4 \Delta \tilde{y}^{(p)}(E)+O(N^{-5/2+C\epsilon_0}).
\end{equation}
Next we will control (\ref{defn_y}). Denote $\Delta y(E):=y^S(E)-y^R(E).$  By (\ref{combinations}), (\ref{combinations1}) and (\ref{bulkbound}), with $1-N^{-D_1}$ probability, we have
\begin{equation} \label{ymbound}
|\Delta m_2^{(5)}| \leq N^{-5/2+C\epsilon_0}.
\end{equation}
In order to estimate $\Delta y(E),$ we integrate (\ref{defn_y}) by parts, first in $e$ then in $\sigma$, by (5.24) of \cite{KY}, with $1-N^{-D_1}$ probability, we have
\begin{align} \label{ymintergate}
&\left | \frac{N}{2\pi}\int_{\mathbb{R}^2} i \sigma f^{\prime \prime}_E(e)\chi(\sigma)\Delta^{(5)} m_2(e+i\sigma)\mathbf{1}(|\sigma| \geq \tilde{\eta}_d)ded\sigma \right | \nonumber \\
& \leq CN \left | \int  f^{\prime}_E(e) \tilde{\eta}_d \Delta m_2^{(5)}(e+i\tilde{\eta}_d) de \right |+ CN \left | \int  f^{\prime}_E(e) de \int_{\tilde{\eta}_d}^{\infty}  \chi^{\prime}(\sigma) \sigma \Delta m_2^{(5)(e+i\sigma)}d \sigma \right | \nonumber \\
& +CN \left | \int f^{\prime}_E(e)  de \int_{\tilde{\eta}_d}^{\infty}  \chi(\sigma) \Delta m_2^{(5)}(e+i\sigma) d \sigma \right |.
\end{align}
By (\ref{ymbound}) and (\ref{ymintergate}), with $1-N^{-D_1}$ probability, we have the following decomposition
\begin{equation} \label{bulkdeltayde}
\Delta y(E)=\sum_{p=1}^4 \Delta y^{(p)}(E)+O(N^{-5/2+C\epsilon_0}).
\end{equation}
Similar to the discussion of (\ref{bulkxde}),  (\ref{bulkdelyde}) and (\ref{bulkdeltayde}), it is easy to check that with $1-N^{-D_1}$ probability, we have 
\begin{equation}\label{bulkelebound1}
\int_I |\Delta x^{(p)}(E)|dE \leq N^{-p/2+C \epsilon_0}, \ |\Delta \tilde{y}^{(p)}(E)| \leq N^{-p/2+C \epsilon_0}, \  |\Delta y^{(p)}(E)| \leq N^{-p/2+C \epsilon_0},
\end{equation}
where $p=1,2,3,4$ and $C>0$ is some constant. Furthermore, by (\ref{bulkbound}), with $1-N^{D_1}$ probability, we have
\begin{equation} \label{bulkinterbound}
\int_I |x(E)|dE \leq N^{C\epsilon_0}.
\end{equation}
Due to the similarity of (\ref{bulkdelyde}) and (\ref{bulkdeltayde}), we denote $\ \bar{y}=y+\tilde{y}$ and then we have
\begin{equation} \label{ybardecomposition}
\Delta \bar{y}=\sum_{p=1}^4 \Delta \bar{y}^{(p)}(E)+O(N^{-5/2+C\epsilon_0}).
\end{equation}
By (\ref{bulkelebound1}),  (\ref{ybardecomposition}) and Taylor expansion, we have 
\begin{align} \label{ytaylorbulk}
q(\bar{y}^S)=q(\bar{y}^R)+q^{\prime}(\bar{y}^R)\left( \sum_{p=1}^4 \Delta \bar{y}^{(p)}(E) \right)+ \frac{1}{2}q^{\prime \prime}(\bar{y}^R)\left( \sum_{p=1}^3 \Delta \bar{y}^{(p)}(E) \right)^2+\frac{1}{6}q^{(3)}(\bar{y}^R)\left( \sum_{p=1}^2 \Delta \bar{y}^{(p)}(E) \right)^3 \nonumber \\
+\frac{1}{24}q^{(4)}(\bar{y}^R)\left(  \Delta \bar{y}^{(1)}(E) \right)^4+o(N^{-2}).
\end{align} 
By (\ref{defn_theta1bd}), we have
\begin{align} \label{totaltaylorbulk}
\theta[\int_I x^S q(\bar{y}^S)dE]-\theta[\int_I x^R q(\bar{y}^R)dE]=\sum_{s=1}^4 \frac{1}{s!} \theta^{(s)}(\int_I x^R q(\bar{y}^R)dE)\left[\int_I x^S q(\bar{y}^S)dE-\int_I x^R q(\bar{y}^R)dE\right]^s+o(N^{-2}).
\end{align}
Inserting $x^S=x^R+ \sum_{p=1}^4 \Delta x^{(p)}$ and (\ref{ytaylorbulk}) into (\ref{totaltaylorbulk}), using the partial expectation argument  as in Section \ref{greenfunction_comp_edge}, by (\ref{defn_theta1bd}), (\ref{bulkelebound1}) and (\ref{bulkinterbound}),  we find that that exists a random variable $B$ that depends on the randomness only through $O$ and  the first four moments of $X^{G}_{i \mu_1}$, such that 
\begin{equation}
\mathbb{E} \theta[\int_I x^S q(y+\tilde{y})^S dE]-\mathbb{E} \theta [\int_I x^R q(y+\tilde{y})^R dE]=B+o(N^{-2}).
\end{equation}
Hence, combine with (\ref{reducedbulk1}), we prove (\ref{bulkfinalneedtoprove}), which implies (\ref{bulkcomparisoneq}). This finishes our proof.
\end{proof}
\end{proof}
\begin{appendix}
\section{Proof of Lemma \ref{lem_deltaestimatetileeta}}\label{appendix_a}
In this appendix, we will follow the basic approach of \cite[Lemma 6.1]{EYY} to prove Lemma \ref{lem_deltaestimatetileeta}, which compares the sharp counting function with its delta approximation smoothed on the scale $\tilde{\eta}.$ 
\begin{proof}[Proof of Lemma \ref{lem_deltaestimatetileeta}] Recall (\ref{defn_edomain}), we have $\tilde{\eta} \ll t \ll E_U-E^- \leq \frac{7}{2}N^{-2/3+\epsilon} .$ Furthermore, for $x \in \mathbb{R},$ we have
\begin{equation} \label{differencecharcteristicfunction}
|\mathcal{X}_E(x)-\mathcal{X}_E*\vartheta_{\tilde{\eta}}(x)|=\left | (\int_{\mathbb{R} } \mathcal{X}_E(x)-\int_{E^--x}^{E_U-x})\vartheta_{\tilde{\eta}}(y)dy \right |.
\end{equation}
Denote $d(x):=|x-E^-|+\tilde{\eta}$ and $d_U(x):=|x-E_U|+\tilde{\eta}$, we need  the following bound to estimate (\ref{differencecharcteristicfunction}).
\begin{lem} There exists some constant $C>0,$ such that 
\begin{equation*}
|\mathcal{X}_E(x)-\mathcal{X}_E*\vartheta_{\tilde{\eta}}(x)| \leq C \tilde{\eta}\left[\frac{E_U-E^-}{d_U(x)d(x)}+\frac{\mathcal{X}_E(x)}{(d_U(x)+d(x))}\right].
\end{equation*}
\begin{proof}
When $x>E_U, $ we have
\begin{align*}
|\mathcal{X}_E(x)-\mathcal{X}_E*\vartheta_{\tilde{\eta}}(x)| = \tilde{\eta} \left | \int_{x-E_U}^{x-E^-} \frac{1}{\pi(y^2+\tilde{\eta}^2)} dy\right |&=\frac{\tilde{\eta}}{\pi}\left[\int_{x-E_U}^{x-E^-} \frac{1}{(y+\tilde{\eta})^2}+\frac{2 \tilde{\eta}y}{(y^2+\tilde{\eta}^2)(y+\tilde{\eta})^2} dy\right] \nonumber \\
& \leq C \tilde{\eta} \frac{E_U-E^-}{d_U(x)d(x)}.
\end{align*}
Similarly, we can prove when $x<E^-.$ When $E^- \leq x \leq E_U, $  we have
\begin{equation*}
|\mathcal{X}_E(x)-\mathcal{X}_E*\vartheta_{\tilde{\eta}}(x)| \leq \frac{C\tilde{\eta}}{d_U(x)}+\frac{C \tilde{\eta}}{d(x)}=C \tilde{\eta}\left[\frac{E_U-E^-}{d_U(x)d(x)}+\frac{2\tilde{\eta}}{d_U(x)d(x)}\right],
\end{equation*}
where we use (\ref{anestimationequ}). Therefore, it suffices to show that
\begin{equation}\label{lastlemmastep}
d_U(x)d(x) \geq \frac{1}{4}\tilde{\eta}(d_U(x)+d(x))=\frac{1}{4}\tilde{\eta}(E_U-E^-+2\tilde{\eta}).
\end{equation}
An elementary calculation yields that $d_U(x)d(x) \geq \tilde{\eta}(E_U-E^-+\tilde{\eta}),$ which implies (\ref{lastlemmastep}). Hence, we conclude our proof. 
\end{proof}
\end{lem}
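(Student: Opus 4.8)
The statement is a purely deterministic one‑variable estimate comparing the sharp indicator $\mathcal{X}_E=\mathbf{1}_{[E^-,E_U]}$ with its Poisson smoothing, so the plan is to unfold the convolution explicitly against $\vartheta_{\tilde\eta}(u)=\tilde\eta/(\pi(u^2+\tilde\eta^2))$ and split according to the position of $x$. First I would record that, by the change of variables $u=y-x$ and the evenness of $\vartheta_{\tilde\eta}$, one has $\mathcal{X}_E*\vartheta_{\tilde\eta}(x)=\int_{E^--x}^{E_U-x}\vartheta_{\tilde\eta}(u)\,du$, and then distinguish the three regimes $x>E_U$, $x<E^-$, and $E^-\le x\le E_U$; on each of these $\mathcal{X}_E(x)$ is constant, which is exactly why the bound takes the announced shape.

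In the two outer regimes $\mathcal{X}_E(x)=0$, so I must bound $\int_{E^--x}^{E_U-x}\vartheta_{\tilde\eta}$, an integral over an interval of length $E_U-E^-$ sitting inside a half‑line; by evenness of $\vartheta_{\tilde\eta}$ it suffices to treat $x>E_U$, where the interval becomes $[x-E_U,\,x-E^-]\subset(0,\infty)$. The key algebraic device is the partial‑fraction identity $\frac{1}{y^2+\tilde\eta^2}=\frac{1}{(y+\tilde\eta)^2}+\frac{2\tilde\eta y}{(y^2+\tilde\eta^2)(y+\tilde\eta)^2}$ together with the elementary inequality $2\tilde\eta y\le y^2+\tilde\eta^2$ for $y\ge 0$, which shows the second summand is at most $(y+\tilde\eta)^{-2}$; hence the integrand $\frac{\tilde\eta}{\pi(y^2+\tilde\eta^2)}$ is dominated by $2\tilde\eta\pi^{-1}(y+\tilde\eta)^{-2}$, and integrating and telescoping gives $\int_{x-E_U}^{x-E^-}(y+\tilde\eta)^{-2}\,dy=\frac{1}{d_U(x)}-\frac{1}{d(x)}=\frac{E_U-E^-}{d_U(x)d(x)}$ (using $d(x)=x-E^-+\tilde\eta$ and $d_U(x)=x-E_U+\tilde\eta$ in this regime). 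This is precisely the first term on the right‑hand side, with the second term absent because $\mathcal{X}_E(x)=0$; the case $x<E^-$ is identical after reflecting.

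In the inner regime $\mathcal{X}_E(x)=1$ and $0\le\mathcal{X}_E*\vartheta_{\tilde\eta}(x)\le 1$, so $|\mathcal{X}_E(x)-\mathcal{X}_E*\vartheta_{\tilde\eta}(x)|=\int_{-\infty}^{E^--x}\vartheta_{\tilde\eta}+\int_{E_U-x}^{\infty}\vartheta_{\tilde\eta}$, which by evenness equals $\int_{x-E^-}^{\infty}\vartheta_{\tilde\eta}+\int_{E_U-x}^{\infty}\vartheta_{\tilde\eta}$ with both lower limits nonnegative. Applying the tail bound (\ref{anestimationequ}) to each integral yields $C\tilde\eta/d(x)+C\tilde\eta/d_U(x)=C\tilde\eta\,\frac{d(x)+d_U(x)}{d(x)d_U(x)}$, and since $d(x)+d_U(x)=(x-E^-+\tilde\eta)+(E_U-x+\tilde\eta)=E_U-E^-+2\tilde\eta$ this is $C\tilde\eta\big(\frac{E_U-E^-}{d(x)d_U(x)}+\frac{2\tilde\eta}{d(x)d_U(x)}\big)$. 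The only point requiring a little care — the modest obstacle of the whole lemma — is recasting the residual term $2\tilde\eta/(d(x)d_U(x))$ in the announced form $C/(d(x)+d_U(x))$; this follows from the elementary lower bound $d(x)d_U(x)=\big((x-E^-)+\tilde\eta\big)\big((E_U-x)+\tilde\eta\big)\ge\tilde\eta(E_U-E^-)+\tilde\eta^2\ge\frac{1}{2}\tilde\eta\,(d(x)+d_U(x))$. Combining the three regimes and enlarging $C$ as needed gives the claimed inequality.
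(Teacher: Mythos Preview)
Your argument is correct and follows essentially the same route as the paper: the same three-case split on the position of $x$, the same partial-fraction device $\frac{1}{y^2+\tilde\eta^2}=\frac{1}{(y+\tilde\eta)^2}+\frac{2\tilde\eta y}{(y^2+\tilde\eta^2)(y+\tilde\eta)^2}$ for the outer regimes, the same use of the tail bound (\ref{anestimationequ}) in the inner regime, and the same elementary lower bound $d(x)d_U(x)\ge\tilde\eta(E_U-E^-+\tilde\eta)$ to absorb the residual $2\tilde\eta/(d(x)d_U(x))$ into $C/(d(x)+d_U(x))$. If anything, you spell out more explicitly why the second partial-fraction term is dominated by $(y+\tilde\eta)^{-2}$ via $2\tilde\eta y\le y^2+\tilde\eta^2$, and you obtain the slightly sharper constant $\tfrac12$ in place of the paper's $\tfrac14$.
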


For the right-hand side of (\ref{differencecharcteristicfunction}), when $\min \{d(x), d_U(x)\} \geq t, $ it will be bounded by $O(N^{-3\epsilon_0+\epsilon});$  when $ \min\{d(x), d_U(x)\} \leq t, $ then we must have $\max \{d(x), d_U(x)\} \geq (E_U-E^-)/2$, therefore, it will be bounded by a constant $c$ as $\min \{d(x), d_U(x)\} \geq \tilde{\eta}.$ Therefore, by using the above results for the diagonal elements of $Q_1,$ we have
\begin{align} \label{trdiff1stest}
|\operatorname{Tr}\mathcal{X}_E(Q_1)& -\operatorname{Tr}\mathcal{X}_E*\vartheta_{\tilde{\eta}}(Q_1)| \leq  C\left[\operatorname{Tr}f(Q_1)+ c\mathcal{N}(E^--t, E^-+t)+N^{-3\epsilon_0+\epsilon}\mathcal{N}(E^-+t, E_U-t) \right.  \nonumber \\
&\left. + c\mathcal{N}(E_U-t,E_U+t)+N^{-3\epsilon_0+\epsilon}\mathcal{N}(E_U+t, a_{2k-2})+ \sum_{i=1}^M \mathcal{X}_E*\vartheta_{\tilde{\eta}}((Q_1)_{ii})\mathbf{1}((Q_1)_{ii}>a_{2k-2}) \right],
\end{align}
where $f$ is defined as
\begin{equation*}
f(x):=\frac{\tilde{\eta}(E_U-E^-)}{d_U(x)d(x)}\mathbf{1}(x \leq E^--t).
\end{equation*}
Assume that $\epsilon<\epsilon_1 \epsilon_0,$ by Assumption \ref{defnregularity}, (\ref{thm_rigidity_equ}) and the fact $\epsilon_1<\epsilon$, with $1-N^{-D_1}$ probability, we have
\begin{equation*}
\mathcal{N}(E_U-t, E_U+t)=0, \ \mathcal{N}(E_U+t, a_{2k-2})=0, \   \mathcal{N}(E^-+t, E_U-t) \leq N^{\epsilon_0}.
\end{equation*}
On the other hand, when $(Q_1)_{ii}>a_{2k-2}$, by Assumption \ref{defnregularity}, we have 
\begin{align*}
\mathcal{X}_E*\vartheta_{\tilde{\eta}}((Q_1)_{ii})=\tilde{\eta}\int_{(Q_1)_{ii}-E_U}^{(Q_1)_{ii}-E^-}\frac{1}{y^2+\tilde{\eta}^2}dy \leq \tilde{\eta} \int_{(Q_1)_{ii}-E_U}^{(Q_1)_{ii}-E^-}\frac{1}{y^2}dy \leq \frac{7}{2\tau^2} N^{-4/3+\epsilon-9\epsilon_0},
\end{align*}
where $\tau$ is defined in Assumption \ref{defnregularity}.  Hence, we have $\sum_{i=1}^M \mathcal{X}_E*\vartheta_{\tilde{\eta}}((Q_1)_{ii})\mathbf{1}((Q_1)_{ii}>a_{2k-2})) \leq C N^{-1/3+\epsilon-9\epsilon_0}.$ Therefore, (\ref{trdiff1stest}) can be bounded in the following way
\begin{equation*}
|\operatorname{Tr}\mathcal{X}_E(Q_1)-\operatorname{Tr}\mathcal{X}_E*\vartheta_{\tilde{\eta}}(Q_1)| \leq C(\operatorname{Tr} f(Q_1)+\mathcal{N}(E^--t, E^-+t)+N^{-2\epsilon_0}).
\end{equation*}
To finish our proof, we need to show that with $1-N^{-D_1}$ probability, $\operatorname{Tr}f(Q_1) \leq N^{-2\epsilon_0}.$ By (6.16) of \cite{EYY}, we have
\begin{equation}
\frac{f(x)}{\tilde{\eta}(E_U-E^-)} \leq C(g*\vartheta_{t})(E^--x),
\end{equation}
where $g(y)$ is defined as $g(y):=\frac{1}{y^2+t^2}. $ Recall (\ref{EEEE}) and (\ref{thetaeta}), we have
\begin{equation*}
\frac{1}{N} \operatorname{Tr}\vartheta_t(Q_1-E^-)=\frac{1}{\pi}\operatorname{Im} m_2(E^-+it).
\end{equation*}
Hence, we can obtain that
\begin{align} \label{finallargeequappendix}
\operatorname{Tr} f(Q_1) & \leq  C N \tilde{\eta}(E_U-E^-) \int_{\mathbb{R}} \frac{1}{y^2+t^2} \operatorname{Im}m_2(E^--y+it)dy  \nonumber \\
& \leq CN^{1/3+\epsilon}\tilde{\eta} \int_{\mathbb{R}} \frac{1}{y^2+t^2}[\operatorname{Im}m(E^--y+it)+\frac{N^{\epsilon_1}}{N t}]dy,
\end{align}
where we use (\ref{thm_anisotropic_eq2}). It is easy to check that 
\begin{equation}\label{final0}
CN^{-1/3+\epsilon+\epsilon_1-9\epsilon_0} \int_{\mathbb{R}} \frac{1}{y^2+t^2}\frac{1}{N t}dy \leq CN^{-4/3+\epsilon+\epsilon_1-9\epsilon_0} t^{-2} \int_{\mathbb{R}}\frac{t}{t^2+y^2}dy \leq N^{-2\epsilon_0}.
\end{equation}
Next, we will use (\ref{squareroot}) to estimate (\ref{finallargeequappendix}). When $E^--y \geq a_{2k-1},$ we have
\begin{equation*}
\operatorname{Im}m(E^--y+it) \leq C \sqrt{t+E^--y-a_{2k-1}}. 
\end{equation*}
Denote  $A:=\{E^--y-a_{2k-1} \geq t\}.$ Then we have
\begin{equation} \label{final1}
\int_A \frac{\operatorname{Im}m(E^--y+it)}{y^2+t^2}dy \leq C\int_{\mathbb{R}}\frac{|y|^{1/2}+|E^--a_{2k-1}|^{1/2}}{y^2+t^2}dy \leq C(\frac{1}{t^{1/2}}+\frac{|E^--a_{2k-1}|^{1/2}}{y^2+t^2}),
\end{equation}
\begin{equation}\label{final2}
\int_{A^c} \frac{\operatorname{Im}m(E^--y+it)}{y^2+t^2}dy \leq C t^{-1/2}.
\end{equation}
The other case can be treated similarly. Therefore, by (\ref{finallargeequappendix}), (\ref{final0}), (\ref{final1}) and  (\ref{final2}), we have proved $\operatorname{Tr}f(Q_1) \leq N^{-2\epsilon_0}$ holds true with $1-N^{-D_1}$ probability. Hence, we conclude our proof. 
\end{proof}
\end{appendix}

\vspace{0.5cm}
\noindent{\bf Acknowledgements.} I am very grateful to Jeremy Quastel and B{\' a}lint Vir{\' a}g for many valuable insights and helpful suggestions, which have significantly improved the paper. I would also like to thank my friend Fan Yang for many useful discussions and pointing out some references, especially \cite{XYY}.

\bibliographystyle{abbrv}
\bibliography{eigenvector_bib1}

\begin{thebibliography}{10}

\bibitem{BMP}
Z.~Bai, B.~Miao, and G.~Pan.
\newblock On asymptotics of eigenvectors of large sample covariance matrix.
\newblock {\em Ann. Prob.}, 35:1532--1572, 2007.

\bibitem{BS}
Z.~Bai and J.~Silverstein.
\newblock {\em Spectral Analysis of Large Dimensional Random Matrices}.
\newblock Springer, 2 edition, 2010.

\bibitem{BBP}
J.~Baik, G.~Ben~Arous, and S.~P{\' e}ch{\' e}.
\newblock Phase transition of the largest eigenvalue for nonnull complex sample
  covariance matrices.
\newblock {\em Ann. Prob.}, 33:1643--1697, 2005.

\bibitem{BPZ1}
Z.~Bao, G.~Pan, and W.~Zhou.
\newblock Universality for the largest eigenvalue of sample covariance matrices
  with general population.
\newblock {\em Ann. Stat.}, 43:382--421, 2015.

\bibitem{BGGM}
F.~Benaych-Georges, A.~Guionnet, and M.~Maida.
\newblock Fluctuations of the extreme eigenvalues of finite rank deformations
  of random matrices.
\newblock {\em Electr.J.Prob.}, 16:1621--1662, 2011.

\bibitem{BGN}
F.~Benaych-Georges and R.~Nadakuditi.
\newblock The eigenvalues and eigenvectors of finite, low rank perturbations of
  large random matrices.
\newblock {\em Adv. Math.}, 227:494--521, 2011.

\bibitem{BEKYY}
A.~Bloemendal, L.~Erd{\H o}s, A.~Knowles, H.-T. Yau, and J.~Yin.
\newblock Isotropic local laws for sample covariance and generalized
  \uppercase{W}igner matrices.
\newblock {\em Electr.J. Prob.}, 19:1--53, 2014.

\bibitem{BKYY}
A.~Bloemendal, A.~Knowles, H.-T. Yau, and J.~Yin.
\newblock On the principal components of sample covariance matrices.
\newblock {\em Prob. Theor. Rel. Fields}, 164:459--552, 2016.

\bibitem{PBY}
P.~Bourgade and H.-T. Yau.
\newblock The eigenvector moment flow and local quantum unique ergodicity.
\newblock {\em Commun. Math. Phys.}, 350:231--278, 2017.

\bibitem{CRZ}
T.~Cai, Z.~Ren, and H.~Zhou.
\newblock Estimating structured high-dimensional covariance and precision
  matrices: optimal rates and adaptive estimation.
\newblock {\em Electron. J. Stat.}, 10:1--59, 2016.

\bibitem{CDF}
M.~Capitaine, C.~Donati-Martin, and D.~F{\' e}ral.
\newblock The largest eigenvalues of finite rank deformation of large
  \uppercase{W}igner matrices: Convergence and nonuniversality of the
  fluctuations.
\newblock {\em Ann. Prob.}, 37:1--47, 2009.

\bibitem{DXC20171}
X.~Ding.
\newblock High dimensional deformed rectangular matrices with applications in
  matrix denoising.
\newblock {\em arXiv: 1702.06975}, 2017.

\bibitem{DY}
X.~Ding and F.~Yang.
\newblock A necessary and sufficient condition for edge universality at the
  largest singular values of covariance matrices.
\newblock {\em Ann. Appl. Probab. (to appear)}, 2017.

\bibitem{KN}
N.~El~Karoui.
\newblock Tracy-{W}idom limit for the largest eigenvalue of a large class of
  complex sample covariance matrices.
\newblock {\em Ann. Prob.}, 35:663--714, 2007.

\bibitem{ERSY}
L.~Erd{\H o}s, J.~Ramirez, B.~Schlein, and H.-T. Yau.
\newblock Universality of sine-kernel for {W}igner matrices with a small
  {G}aussian perbubation.
\newblock {\em Electr.J. Prob.}, 15:526--604, 2010.

\bibitem{EYY}
L.~Erd{\H o}s, H.-T. Yau, and J.~Yin.
\newblock Rigidity of eigenvalues of generalized {W}igner matrices.
\newblock {\em Adv. Math.}, 229:1435--1515, 2012.

\bibitem{GL}
G.~Golub and C.~{V}an Loan.
\newblock {\em Matrix Computations}.
\newblock John Hopkins University Press, 4th edition, 2013.

\bibitem{IJ2}
I.~Johnstone.
\newblock On the distribution of the largest eigenvalue in principal components
  analysis.
\newblock {\em Ann. Statist.}, 29:295--327, 2001.

\bibitem{KY}
A.~Knowles and J.~Yin.
\newblock Eigenvector distribution of {W}igner matrices.
\newblock {\em Prob. Theor. Rel. Fields}, 155:543--582, 2013.

\bibitem{KY2}
A.~Knowles and J.~Yin.
\newblock The outliers of a deformed \uppercase{W}igner matrix.
\newblock {\em Ann. Prob.}, 42:1980--2031, 2014.

\bibitem{KY1}
A.~Knowles and J.~Yin.
\newblock Anisotropic local laws for random matrices.
\newblock {\em Prob. Theor. Rel. Fields}, pages 1--96, 2016.

\bibitem{LP}
O.~Ledoit and S.~P{\' e}ch{\' e}.
\newblock Eigenvectors of some large sample covariance matrix ensembles.
\newblock {\em Prob. Theor. Rel. Fields}, 151:233--264, 2011.

\bibitem{LS4}
J.~Lee and K.~Schnelli.
\newblock Tracy-{W}idom distribution for the largest eigenvalue of real sample
  covariance matrices with general population.
\newblock {\em Ann. Appl. Probab.}, 26:3786--3839.

\bibitem{MP}
V.~Mar{\v c}enko and L.~Pastur.
\newblock Distribution of eigenvalues for some sets of random matrices.
\newblock {\em Mathematics of the USSR-Sbornik}, 1:457, 1967.

\bibitem{ORVW}
S.~O'Rourke, V.~Vu, and K.~Wang.
\newblock Eigenvectors of random matrices: a survey.
\newblock {\em Journal of Combinatorial Theory, Series A}, 144:361--442, 2016.

\bibitem{PY}
N.~Pillai and J.~Yin.
\newblock Edge universality of correlation matrices.
\newblock {\em Ann. Stat.}, 40:1737--1763, 2012.

\bibitem{PY1}
N.~Pillai and J.~Yin.
\newblock Universality of covariance matrices.
\newblock {\em Ann. Appl. Probab.}, 24:935--1001, 2014.

\bibitem{JS2}
J.~Silverstein.
\newblock Some limit theorems on the eigenvectors of large-dimensional sample
  covariance matrices.
\newblock {\em J. Multivar. Anal.}, 18:295--324, 1984.

\bibitem{JS1}
J.~Silverstein.
\newblock The {S}tieltjes transform and its role in eigenvalue behavior of
  large dimensional random matrices.
\newblock Random Matrix Theory and its Applications, Lecture Notes Series.
  World Scientific, Singapore, 2009.

\bibitem{SC}
J.~Silverstein and S.~Choi.
\newblock Analysis of the limiting spectral distribution of large dimensional
  random matrices.
\newblock {\em J. Multivar. Anal.}, 54:295--309, 1995.

\bibitem{TV}
T.~Tao and V.~Vu.
\newblock Random matrices: universal properties of eigenvectors.
\newblock {\em Random Matrices Theory Appl.}, 1:1150001, 2012.

\bibitem{TW}
C.~Tracy and H.~Widom.
\newblock On orthogonal and symplectic matrix ensembles.
\newblock {\em Comm. Math. Phys.}, 177:727--754, 1996.

\bibitem{XYY}
H.~Xi, F.~Yang, and J.~Yin.
\newblock Local circular law for the product of a deterministic matrix with a
  random matrix.
\newblock {\em Electr.J. Prob.}, 22:no.60, 77pp, 2017.

\bibitem{YZB}
J.~Yao, Z.~Bai, and S.~Zheng.
\newblock {\em Large Sample Covariance Matrices and High-Dimensional Data
  Analysis}.
\newblock Cambridge University Press, 2015.

\end{thebibliography}

%

\end{document}